\newtheorem{assumption}{Assumption}
\newtheorem{corollary}{Corollary}
\newtheorem{theorem}{Theorem}
\newtheorem{lemma}{Lemma}
\newtheorem{remark}{Remark}
\newtheorem{proposition}{Proposition}
\newtheorem{definition}{Definition}
\numberwithin{assumption}{section} \numberwithin{corollary}{section} \numberwithin{theorem}{section} \numberwithin{lemma}{section} \numberwithin{definition}{section}
\numberwithin{proposition}{section} \numberwithin{remark}{section}
\def\v{\mathbf}\def\m{\mathbf}\def\vg{\boldsymbol}\def\s{\mathcal}
 \def\X{{\mathbb{X}}}
\begin{document}
\begin{center}
    {\Large\bf Better Solution Principle: A Facet of Concordance between
Optimization and Statistics}
\\[2mm] {\large Shifeng Xiong}
\\ Academy of Mathematics and Systems Science \\Chinese Academy of Sciences, Beijing 100190\\xiong@amss.ac.cn
\end{center}
\vspace{-10mm}

\vspace{1cm} \noindent{\bf Abstract}\quad Many statistical methods require solutions to optimization problems. When the global solution is hard to attain, statisticians
always use the better if there are two solutions for chosen, where the word ``better" is understood in the sense of optimization. This seems reasonable in that the
better solution is more likely to be the global solution, whose statistical properties of interest usually have been well established. From the statistical perspective,
we use the better solution because we intuitively believe the principle, called \emph{better solution principle} (BSP) in this paper, that a better solution to a
statistical optimization problem also has better statistical properties of interest. BSP displays some concordance between optimization and statistics, and is expected
to widely hold. Since theoretical study on BSP seems to be neglected by statisticians, this paper aims to establish a framework for discussing BSP in various statistical
optimization problems. We demonstrate several simple but effective comparison theorems as the key results of this paper, and apply them to verify BSP in commonly
encountered statistical optimization problems, including maximum likelihood estimation, best subsample selection, and best subset regression. It can be seen that BSP for
these problems holds under reasonable conditions, i.e., a better solution indeed has better statistical properties of interest. In addition, guided by the BSP theory, we
develop a new best subsample selection method that performs well when there are clustered outliers.

%\vspace{1cm} \noindent{\bf AMS 2000 subject classifications:} 62A01, 62C05.

\vspace{2mm} \noindent{{\bf KEY WORDS:} Best subsample selection; Best subset regression; Combinatorial optimization; Global optimization; Large-scale optimization;
Likelihood principle; Robust estimation; Separation property; Variable selection.}

\newpage

\section{Introduction}\label{sec:intro}
\hskip\parindent \vspace{-0.8cm}

Many statistical methods require solutions to optimization problems. A notable example is maximum likelihood estimation, whose objective is to maximize the likelihood
function. Below is a brief description of some statistical methods that rely on optimization problems in various statistical areas.

\begin{description}

\item[$\bullet$] \emph{Maximum likelihood and related methods}: The maximum likelihood method can be used for parametric models and has good statistical properties
under regularity conditions. An extension of this method is M-estimation (Huber 1981), which obtains estimators by minimizing a general class of functions with respect
to the parameter. A corresponding method for nonparametric models is empirical likelihood (Owen 2001), which requires maximizing the empirical likelihood function.

\item[$\bullet$] \emph{Model fitting and selection}: In regression analysis, the parameter of the regression model needs to estimate for yielding a good fit to the data.
For this purpose, methods that minimize criteria which justify the goodness of fit are used such as the least squares method. Smoothing spline regression (Wahba 1990)
and local polynomial regression (Fan and Gijbels 1996) can be viewed as two variants of the least squares method in nonparametric settings. When model selection is
concerned, regularized regression methods, which minimize the regularized criteria to produce sparse estimators, can be used such as best subset regression (the
$\ell_0$-norm regularized method) and the lasso (the $\ell_1$-norm regularized method); see e.g., Hastie, Tibshirani, and Friedman (2008).

\item[$\bullet$] \emph{Multivariate analysis}: Many problems in multivariate statistical analysis involve projections of the data into a lower dimensional space.
Principal component analysis, canonical correlation analysis, and Fisher's discrimination are well known examples (Anderson 2003). Optimization problems over a
multi-dimensional sphere need to solve to find the projections.

\item[$\bullet$] \emph{Bayesian statistics}: A method in Bayesian point estimation is to use the posterior mode, which is the maximum of the posterior density
(Gelman et al. 2004). Maximum likelihood estimation can be viewed as a special case of this method.

\item[$\bullet$] \emph{Robust estimation}: Besides the M-estimate, popular robust estimates which can be formulated as optimization problems
include the least trimmed squares estimate, the S-estimate, and the minimum covariance determinant estimate, among others (Maronna, Martin, and Yohai 2006). Optimization
methods are also ubiquitous in computing depth functions (Zuo and Serfling 2000), which are useful to define multivariate median and trimmed mean.

\item[$\bullet$] \emph{Design of experiments}: A number of experimental designs are constructed by optimizing certain criteria. An example is
the minimum aberration criterion in fractional factorial designs (Wu and Hamada 2009). For continuous factors, optimal designs (Atkinson, Donev, and Tobias 2007) are
derived by optimizing model-based criteria, and space-filling designs (Fang, Li, and Sudjianto 2006) correspond to geometric or discrepancy criteria.

\item[$\bullet$] \emph{Statistical learning}: This area seriously utilizes computation for statistical inference. Many important methods such as
support vector machine (Scholkopf and Smola 2002) and boosting (Freund and Schapire 1997) are based on minimizing loss functions. In addition, (regularized) maximum
likelihood estimation is commonly used for graphical models (Wainwright and Jordan 2008). Cross-validation, which minimizes the empirical prediction error, is ubiquitous
in various methods to select tuning parameters (Hastie, Tibshirani, and Friedman 2008).

\end{description}

The above description, although far from thorough, indicates that optimization plays a vital role in modern statistics. In the meanwhile, statisticians have to face the
common difficulty in the optimization community, i.e., it is often extremely hard to obtain the global solution to a nonconvex optimization problem. A number of global
optimization algorithms have been proposed, including the simulated annealing algorithm (Kirkpatrick, Gelatt, and Vecchi 1983) and the genetic algorithm (Dorsey and
Mayer 1995). However, they can attain the global solution only in the probabilistic sense, and often take an unrealistically long time to approach it in practice (Lundy
and Mees 1986). When handling large-scale data, the problem of multiple extrema becomes more serious. In fact, for such cases, it is also hard to obtain the solution to
a convex optimization problem due to the unaffordable computational time and memory (Tibshirani et al. 2012; Ma, Mahoney, and Yu 2013). Another difficulty from the
problem of multiple extrema is that we can rarely know whether a solution at hand is the global solution (Gan and Jiang 1999).

When the global solution is hard to attain and/or to verify, statisticians always take the solution whose objective value is as small as possible (for minimization
problems) as the final solution. In other words, for two solutions, we always use the ``better" one, where the word ``better" should be understood in the sense of
optimization. This seems reasonable in that the better solution is more likely to be the global solution, whose statistical properties of interest usually have been well
established. From the statistical perspective, we use the better solution because we intuitively believe the principle, called \emph{better solution principle} (BSP) in
this paper, that a better solution to a statistical optimization problem also has better statistical properties of interest (closer statistical properties to those the
global solution has). This principle shows some concordance, or monotonicity, between optimization and statistics, and is expected to widely hold. Strictly speaking, a
better solution can safely be used only after the corresponding BSP is verified. However, it is surprising that statisticians seem to neglect this problem, although we
have actually made decisions following BSP ever since complex optimization problems appeared in statistics. To the best of the author's knowledge, no paper has formally
discussed BSP. For example, in the maximum likelihood problem, it is not clear to us whether a better solution with greater likelihood has higher estimation accuracy.
Fairly recently, Xiong (2014) introduced the \emph{better-fitting better-screening} rule when discussing variable screening in high-dimensional linear models. This rule
tells us that a subset with smaller residual sums of squares possesses better asymptotic screening properties, i.e., is more likely to include the true submodel
asymptotically. Here such a subset can be viewed as a better solution to the $\ell_0$-norm constrained least squares problem. Therefore, the better-fitting
better-screening rule is actually the BSP for this problem. In this paper, we aim to establish a relatively general framework for discussing BSP in various statistical
optimization problems.

The rest of the paper is organized as follows. We first present examples where BSP immediately holds in Section \ref{sec:ke}. Such examples widely exist in experimental
designs. They can help us understand BSP and the reason why we introduce the theorems in the following text. In Section \ref{sec:th}, we demonstrate several comparison
theorems which state that a better solution is more likely to have good statistical properties if the optimization problem possesses certain separation properties. These
theorems, which look very simple and understandable, are effective to establish BSP in a general setting. Sections \ref{sec:smle}-\ref{sec:bsr} apply these results to
several statistical optimization problems, including maximum likelihood estimation, best subsample selection, and best subset regression. The latter two problems are
often combinatorial optimization problems, which are more difficult to solve than continuous problems from the optimization perspective. Here best subsample selection is
referred to as the method of selecting the best part of observations to make inferences in the presence of outliers, and the minimum covariance determinant estimate and
least trimmed squares estimate are instances of estimates based on it. We can see that BSP for these problems holds under reasonable conditions, i.e., a better solution
indeed has better statistical properties of interest. In Section \ref{sec:boe}, we develop a new best subsample selection method which can perform well when there are
clustered outliers. A robust estimator based on it having consistency even under contaminated models is of independent interest in robust statistics. Section
\ref{sec:dis} concludes with some discussion.

\section{Known examples where BSP holds}\label{sec:ke}
\hskip\parindent \vspace{-0.8cm}

Obvious examples where BSP holds exist in experimental designs derived by optimizing some criteria. We take the D-optimal design for example. Consider a regression
model\begin{equation*} y=\sum_{i=1}^d\theta_ir_i(\v{x})+\varepsilon,\end{equation*}where the control variable $\v{x}$ lies in a subset $\mathcal{D}$ of $\mathbb{R}^p$,
$r_i$'s are specified functions, $\vg{\theta}=(\theta_1,\ldots,\theta_d)'$ is the vector of unknown parameters, and $\varepsilon$ is the random error. Given the sample
size $n$, denote the experimental design by $\mathcal{P}=\{\v{x}_1,\ldots,\v{x}_n\}$. The information matrix of this design is
$\m{M}(\mathcal{P})=\m{R}(\mathcal{P})'\m{R}(\mathcal{P})$,
where$$\m{R}(\mathcal{P})=\left(\begin{array}{ccc}r_1(\v{x}_1)&\cdots&r_d(\v{x}_1)\\\vdots&\ddots&\vdots\\r_1(\v{x}_n)&\cdots&r_d(\v{x}_n)\end{array}\right).$$The
D-optimal design minimizes the generalized variance of the least squares estimate of $\vg{\theta}$, i.e., it is the solution to the optimization problem
\begin{equation}\label{do}\min_{\v{x}_i\in\mathcal{D}}\psi(\mathcal{P})=\big[\mathrm{det}(\m{M}(\mathcal{P}))\big]^{-1},\end{equation}where ``$\mathrm{det}$"
denotes determinant. For two designs $\mathcal{P}_1$ and $\mathcal{P}_2$ with $\psi(\mathcal{P}_1)\leqslant\psi(\mathcal{P}_2)$, it is clear that $\mathcal{P}_1$ leads
to a better estimator whose generalized variance is smaller. Therefore, if estimation accuracy (which is justified by generalized variance) is the statistical property
of interest, BSP for problem \eqref{do} holds.

The objective function in \eqref{do} itself is a statistical criterion, which does not involve any random variables.  This is the reason why BSP for \eqref{do}
automatically holds. The same conclusion can be drawn for other model-based optimal designs and minimum aberration designs. For criterion-based space-filling designs,
the geometric or discrepancy criteria used as objective functions seem not to have clear statistical interpretation. However, most of them relate to some desirable
statistical properties. For example, the criteria for constructing the minimax distance design (Johnson, Moore, and Ylvisaker 1990) and uniform design (Fang et al. 2000)
can act as factors in the upper bounds of some estimation errors (Wendland 2005; Niederreiter 1992). If such estimation errors are used to evaluate the corresponding
estimators, we can say that BSP holds.

Design of experiments is a pre-sampling work, and thus the objective functions used in this area do not involve the random sample (except for sequential designs that we
do not consider here). In statistical inference, we have to deal with objective functions depending on the sample, which makes the problem of BSP more complicated. From
the next section, we study whether BSP holds for sample-based optimization problems through introducing new definitions and theorems.

\section{The comparison theorems}\label{sec:th}
\hskip\parindent \vspace{-0.8cm}

Let $(\Omega, \mathfrak{F}, P)$ be a probability space. For simplicity, it is assumed that all sets and maps throughout this paper are measurable (with respect to
according $\sigma$-fields). For each $n\in \mathbb{N}$, the sample $\X_n$ of size $n$ is a map from $\Omega$ to a space $\mathcal{X}_n$. Based on $\X_n$, we make
statistical decision by optimizing a objective function. In this section several comparison theorems are provided to compare the statistical properties of two decisions
with different objective values. We first consider the situation where the decision space does not depend on $n$. An application of the corresponding results is
estimation for parameters, where the decision space is the set on which the parameters are valued. The second subsection discusses the situation where the decision space
depends on $n$, which covers the problem of variable selection. We use two subsections to state the results because there may be some confusion in notation if the
results for the first situation are viewed as special cases of those in the second situation; see Remark \ref{rk:2s}.

\subsection{When the decision space does not depend on $n$}\label{subsec:non}
\hskip\parindent \vspace{-0.6cm}

Let $\mathfrak{D}$ denote the decision space that contains all statistical decisions of interest. Suppose that we need to make inferences based on the global solution to
the optimization problem
\begin{equation}\label{sop}\min_{x\in\mathfrak{D}}\psi_n(x,\X_n),\end{equation}where the objective function $\psi_n$ is a map from
$\mathfrak{D}\times\mathcal{X}_n$ to $\mathbb{R}$. In general, the problem in \eqref{sop} is proposed because its solution can asymptotically lie in a desirable subset
$\mathfrak{A}$ of $\mathfrak{D}$ that contains all ``good" decisions. This property of the global solution can be viewed as a consistency property.

Consider the situations where the global solution to \eqref{sop} is difficult to obtain. Suppose that there are $K$ candidate solutions,
$\xi_n^{(1)},\ldots,\xi_n^{(K)}$. In practice, we always use $\xi_n^*$, which denotes the one that takes the smallest value of $\psi_n(\cdot,\,\X_n)$ among them, as the
final decision. For each $\xi_n^{(k)}$, $k=1,\ldots,K$, $\xi_n^*$ is a ``better" solution since the inequality $$\psi_n(\xi_n^*,\X_n) \leqslant
\psi_n(\xi_n^{(k)},\X_n)$$ always holds. Here we define ``BSP" as ``such a better solution is more likely to lie in $\mathfrak{A}$", and discuss whether it holds.

Let $\mathfrak{B}$ be another subset of $\mathfrak{D}$, which contains relatively bad decisions compared to $\mathfrak{A}$.

\begin{definition}\label{def:bds} We say that $\{\psi_n\}$ strongly separates
$\mathfrak{A}$ from $\mathfrak{B}$, or $\{\psi_n\}$ has the strong separation property with respect of $\mathfrak{A}$ and $\mathfrak{B}$, if as
$n\to\infty$,\begin{equation}\label{ssp} P\left(\sup_{x\in \mathfrak{A}}\psi_n(x,\X_n)<\inf_{y\in \mathfrak{B}}\psi_n(y,\X_n)\right)\to1.\end{equation}We say that
$\{\psi_n\}$ (weakly) separates $\mathfrak{A}$ from $\mathfrak{B}$, or $\{\psi_n\}$ has the (weak) separation property with respect of $\mathfrak{A}$ and $\mathfrak{B}$,
if for all $x\in \mathfrak{A},\ y\in\mathfrak{B}$,
\begin{equation}\label{wsp} \limsup_{n\to\infty}\left[\psi_n(x,\X_n)-\psi_n(y,\X_n)\right]<0\quad\text{(a.s.)},\end{equation}where ``a.s." denotes ``almost surely".
\end{definition}

It is worthwhile noting that the strong separation property needs not to imply the separation property. We use the word ``strong" to distinguish the two properties just
because the former is generally more difficult to verify and can lead to stronger results.

\begin{remark}\label{rk:scale}For convenience in asymptotic analysis, we often consider a scaled objective function. It should be pointed out that \eqref{ssp} holds if
$$P\left(\sup_{x\in \mathfrak{A}}\psi_n(x,\X_n)/a_n<\inf_{y\in \mathfrak{B}}\psi_n(y,\X_n)/a_n\right)\to1$$for arbitrary sequence of positive numbers $\{a_n\}$, and that
\eqref{wsp} holds if $$\limsup_{n\to\infty} \frac{\psi_n(x,\X_n)-\psi_n(y,\X_n)}{a_n}<0\quad\text{(a.s.)}$$
%$$\limsup_{n\to\infty}\left[\psi_n(x,\X_n)-\psi_n(y,\X_n)\right]/a_n<0\quad\text{(a.s.)}$$ 
for a sequence of positive numbers $\{a_n\}$ with
$a_n^{-1}=O(1)$.
\end{remark}

Roughly speaking, the strong separation property requires that the level set corresponding to smaller objective values is asymptotically identical to the set of good
decisions. It shows the consistency of the objective function to the statistical properties of interest. We can immediately prove the following result that this property
implies BSP, where the statistical properties of interest are described with the probability of locating in the set $\mathfrak{A}$ of good decisions.

\begin{theorem}[Strong Comparison Theorem]\label{th:lt} Suppose that
$\{\psi_n\}$ strongly separates $\mathfrak{A}$ from $\mathfrak{B}$. For all $n\in\mathbb{N}$, $\xi_n$ and $\eta_n$ are statistics valued in $\mathfrak{D}$ satisfying
$P(\xi_n\in\mathfrak{A}\cup\mathfrak{B},\ \eta_n\in\mathfrak{A}\cup\mathfrak{B})\to1$ as $n\to\infty$. If $\psi_n(\xi_n,\X_n) \leqslant \psi_n(\eta_n,\X_n)$ for all $n$,
then
$$\liminf_{n\to\infty}\left[P(\xi_n\in \mathfrak{A})-P(\eta_n\in \mathfrak{A})\right]\geqslant0.$$ \end{theorem}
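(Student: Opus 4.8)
The plan is to exploit the fact that, under strong separation, membership in $\mathfrak{A}$ becomes detectable from the objective value alone on a high-probability event. The heart of the argument is a purely deterministic set inclusion: on the event where separation holds and both statistics land in $\mathfrak{A}\cup\mathfrak{B}$, the hypothesis $\psi_n(\xi_n,\X_n)\leqslant\psi_n(\eta_n,\X_n)$ forces $\xi_n\in\mathfrak{A}$ whenever $\eta_n\in\mathfrak{A}$. Accordingly, I would first name the two asymptotically-certain events $S_n=\{\sup_{x\in\mathfrak{A}}\psi_n(x,\X_n)<\inf_{y\in\mathfrak{B}}\psi_n(y,\X_n)\}$ and $E_n=\{\xi_n\in\mathfrak{A}\cup\mathfrak{B},\ \eta_n\in\mathfrak{A}\cup\mathfrak{B}\}$, recording that $P(S_n)\to1$ by the strong separation hypothesis and $P(E_n)\to1$ by assumption.

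Next I would establish the inclusion $\{\eta_n\in\mathfrak{A}\}\cap S_n\cap E_n\subseteq\{\xi_n\in\mathfrak{A}\}$. Fix an outcome in the intersection. If $\eta_n\in\mathfrak{A}$, then $\psi_n(\eta_n,\X_n)\leqslant\sup_{x\in\mathfrak{A}}\psi_n(x,\X_n)$, which on $S_n$ is strictly less than $\inf_{y\in\mathfrak{B}}\psi_n(y,\X_n)$. Chaining this with the assumed ordering gives $\psi_n(\xi_n,\X_n)\leqslant\psi_n(\eta_n,\X_n)<\inf_{y\in\mathfrak{B}}\psi_n(y,\X_n)$, so $\xi_n\notin\mathfrak{B}$; since $E_n$ guarantees $\xi_n\in\mathfrak{A}\cup\mathfrak{B}$, we conclude $\xi_n\in\mathfrak{A}$.

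The final step is to translate this inclusion into probabilities. Taking probabilities yields $P(\{\eta_n\in\mathfrak{A}\}\cap S_n\cap E_n)\leqslant P(\xi_n\in\mathfrak{A})$, and decomposing $P(\eta_n\in\mathfrak{A})$ according to whether $S_n\cap E_n$ occurs gives $P(\eta_n\in\mathfrak{A})\leqslant P(\xi_n\in\mathfrak{A})+P((S_n\cap E_n)^c)$. Since $P((S_n\cap E_n)^c)\leqslant P(S_n^c)+P(E_n^c)\to0$, rearranging produces $P(\xi_n\in\mathfrak{A})-P(\eta_n\in\mathfrak{A})\geqslant-P((S_n\cap E_n)^c)$, and passing to $\liminf$ delivers the claim.

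There is essentially no analytic obstacle here; the result is a clean consequence of the definition, and the only real subtlety is bookkeeping. The one place that demands care is the use of the \emph{strict} inequality in the definition of strong separation: it is precisely this strictness that lets me rule out $\xi_n\in\mathfrak{B}$ outright rather than leaving a boundary case ambiguous, so I would make sure the argument invokes it explicitly. Beyond that, I would be careful that the two negligible complements $S_n^c$ and $E_n^c$ are handled by a union bound and that the direction of every inequality is tracked consistently.
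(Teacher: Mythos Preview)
Your proof is correct and follows essentially the same approach as the paper: both arguments hinge on the observation that on the intersection of the separation event and $\{\xi_n,\eta_n\in\mathfrak{A}\cup\mathfrak{B}\}$, the ordering $\psi_n(\xi_n,\X_n)\leqslant\psi_n(\eta_n,\X_n)$ together with $\eta_n\in\mathfrak{A}$ forces $\xi_n\in\mathfrak{A}$ (the paper phrases this contrapositively as $\{\eta_n\in\mathfrak{A},\ \xi_n\in\mathfrak{B}\}$ being disjoint from that intersection). Your probability bookkeeping via the single inclusion and union bound is a bit more streamlined than the paper's explicit three-way decomposition, but the substance is identical.
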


\begin{proof} For $\omega\in\{\eta_n\in \mathfrak{A},\ \xi_n\in \mathfrak{B}\}\subset\Omega$, if $\omega\in \left\{\sup_{x\in
\mathfrak{A}}\psi_n(x,\X_n)<\inf_{y\in\mathfrak{B}} \psi_n(y,\X_n)\right\}\cap\{\xi_n\in\mathfrak{A}\cup\mathfrak{B},\ \eta_n\in\mathfrak{A}\cup\mathfrak{B}\}$, then
\begin{eqnarray*}\psi_n\big(\eta_n(\X_n(\omega)),\X_n(\omega)\big)\leqslant \sup_{x\in \mathfrak{A}}\psi_n(x,\X_n(\omega))<\inf_{y\in \mathfrak{B}}\psi_n(y,\X_n(\omega))
\leqslant \psi_n\big(\xi_n(\X_n(\omega),\X_n(\omega)\big),\end{eqnarray*}which leads to a contradiction. Therefore, $\omega\notin \left\{\sup_{x\in
\mathfrak{A}}\psi_n(x,\X_n)<\inf_{y\in\mathfrak{B}} \psi_n(y,\X_n)\right\}\cap\{\xi_n\in\mathfrak{A}\cup\mathfrak{B},\ \eta_n\in\mathfrak{A}\cup\mathfrak{B}\}$, which
implies $P(\eta_n\in \mathfrak{A},\ \xi_n\in \mathfrak{B})\to0$. We have $P(\eta_n\in \mathfrak{A})=P(\eta_n\in\mathfrak{A},\ \xi_n\in \mathfrak{B}) +P(\eta_n\in
\mathfrak{A},\ \xi_n\in \mathfrak{A})+P(\eta_n\in\mathfrak{A},\ \xi_n\notin \mathfrak{A}\cup\mathfrak{B})=P(\xi_n\in \mathfrak{A})-P(\xi_n\in \mathfrak{A},\ \eta_n\in
\mathfrak{B})+o(1)$. This completes the proof.
\end{proof}

Recall that, for a decision $\xi_n$, the property that $P(\xi_n\in\mathfrak{A})\to1$ can be viewed as a consistency property of $\xi_n$. The following theorem shows that
the strong separation property of $\{\psi_n\}$ is often stronger than the consistency of the minimum of $\psi_n$.
\begin{theorem}\label{th:con}Suppose that $\{\psi_n\}$ strongly separates $\mathfrak{A}$ from $\mathfrak{B}$. If $\xi_n=\arg\min_{x\in\mathfrak{D}}\psi_n(x,\X_n)$ exists and
$P(\xi_n\in\mathfrak{A}\cup\mathfrak{B})\to1$ as $n\to\infty$, then $$\lim_{n\to\infty}P(\xi_n\in\mathfrak{A})\to1.$$ \end{theorem}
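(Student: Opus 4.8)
The plan is to reduce the statement to the Strong Comparison Theorem (Theorem \ref{th:lt}) by comparing the global minimizer $\xi_n$ against a fixed benchmark inside $\mathfrak{A}$. Since $\mathfrak{A}$ is the set of good decisions it is nonempty (otherwise $P(\xi_n\in\mathfrak{A})$ could never approach $1$), so I would fix an arbitrary $a\in\mathfrak{A}$ and set $\eta_n\equiv a$, a constant-valued statistic. Because $\xi_n$ minimizes $\psi_n(\cdot,\X_n)$ over all of $\mathfrak{D}\supseteq\mathfrak{A}$, we automatically have $\psi_n(\xi_n,\X_n)\leqslant\psi_n(a,\X_n)=\psi_n(\eta_n,\X_n)$ for every $n$. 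Moreover $\eta_n\equiv a\in\mathfrak{A}\subseteq\mathfrak{A}\cup\mathfrak{B}$ always, and $P(\xi_n\in\mathfrak{A}\cup\mathfrak{B})\to1$ by hypothesis, so the joint condition $P(\xi_n\in\mathfrak{A}\cup\mathfrak{B},\ \eta_n\in\mathfrak{A}\cup\mathfrak{B})\to1$ required by Theorem \ref{th:lt} is met.

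Applying Theorem \ref{th:lt} then gives $\liminf_{n\to\infty}[P(\xi_n\in\mathfrak{A})-P(\eta_n\in\mathfrak{A})]\geqslant0$. Since $P(\eta_n\in\mathfrak{A})=1$, this reads $\liminf_{n\to\infty}P(\xi_n\in\mathfrak{A})\geqslant1$, and because probabilities are bounded by $1$ the limit exists and equals $1$, which is exactly the claim.

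Alternatively --- and this is the route I would write out in full to keep the argument self-contained --- I would mimic the contradiction used in the proof of Theorem \ref{th:lt} directly. Let $E_n$ denote the strong-separation event appearing in \eqref{ssp} and $F_n=\{\xi_n\in\mathfrak{A}\cup\mathfrak{B}\}$, both of probability tending to $1$. On $E_n\cap F_n$, suppose for contradiction that $\xi_n\in\mathfrak{B}$; then for any $a\in\mathfrak{A}$ the chain $\psi_n(\xi_n,\X_n)\geqslant\inf_{y\in\mathfrak{B}}\psi_n(y,\X_n)>\sup_{x\in\mathfrak{A}}\psi_n(x,\X_n)\geqslant\psi_n(a,\X_n)$ contradicts the global minimality $\psi_n(\xi_n,\X_n)\leqslant\psi_n(a,\X_n)$. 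Hence $\xi_n\in\mathfrak{A}$ on $E_n\cap F_n$, so $P(\xi_n\in\mathfrak{A})\geqslant P(E_n\cap F_n)\geqslant1-P(E_n^{c})-P(F_n^{c})\to1$.

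The argument is short, and the only genuine subtlety --- which I would flag explicitly --- is the implicit requirement that $\mathfrak{A}\neq\varnothing$, so that the benchmark $a$ exists and the bound $\sup_{x\in\mathfrak{A}}\psi_n\geqslant\psi_n(a)$ is meaningful; when $\mathfrak{A}$ is empty the strong separation condition degenerates and the conclusion fails, so this nonemptiness is not merely cosmetic. Everything else is the observation that global minimality over $\mathfrak{D}$ forces $\psi_n(\xi_n,\X_n)$ to sit at or below the $\mathfrak{A}$-level, which separation keeps strictly beneath the $\mathfrak{B}$-level. The main ``obstacle'' is therefore just the bookkeeping of the two probability-$1$ events rather than any analytic difficulty.
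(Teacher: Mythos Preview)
Your proposal is correct, and your second (direct) route is essentially the paper's own proof: the paper dispatches the theorem in one line by observing the set containment $\{\sup_{x\in\mathfrak{A}}\psi_n(x,\X_n)<\inf_{y\in\mathfrak{B}}\psi_n(y,\X_n)\}\cap\{\xi_n\in\mathfrak{A}\cup\mathfrak{B}\}\subset\{\xi_n\in\mathfrak{A}\}$, which is exactly your $E_n\cap F_n\subset\{\xi_n\in\mathfrak{A}\}$. Your first route via Theorem~\ref{th:lt} with the constant benchmark $\eta_n\equiv a$ is a legitimate alternative packaging of the same idea, and your remark about needing $\mathfrak{A}\neq\varnothing$ is a fair observation that the paper leaves implicit.
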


\begin{proof}This theorem follows from $\{\sup_{x\in \mathfrak{A}}\psi_n(x,\X_n)<\inf_{y\in
\mathfrak{B}}\psi_n(y,\X_n)\}\cap\{\xi_n\in\mathfrak{A}\cup\mathfrak{B}\}\subset\{\xi_n\in\mathfrak{A}\}$.
\end{proof}

We next discuss BSP with the separation property. This weaker property cannot directly imply BSP, and more conditions are needed.

\begin{theorem}[(Weak) Comparison Theorem]\label{th:lt2} Suppose that
$\{\psi_n\}$ separates $\mathfrak{A}$ from $\mathfrak{B}$. Denote the set of probability one where \eqref{wsp} holds by $E(x,y)$ and write $E=\cap_{x\in
\mathfrak{A},y\in \mathfrak{B}}E(x,y)$. For all $n\in\mathbb{N}$, $\xi_n$ and $\eta_n$ are statistics valued in $\mathfrak{D}$ satisfying
$P(\xi_n\in\mathfrak{A}\cup\mathfrak{B},\ \eta_n\in\mathfrak{A}\cup\mathfrak{B})\to1$ as $n\to\infty$. If $\psi_n(\xi_n,\X_n)\leqslant \psi_n(\eta_n,\X_n)$ for all $n$,
then\begin{equation}\label{wct}\liminf_{n\to\infty}\left[P(\xi_n\in \mathfrak{A})-P(\eta_n\in \mathfrak{A})\right]\geqslant-P(\Omega\setminus E).\end{equation}
\end{theorem}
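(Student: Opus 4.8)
The plan is to reproduce the elementary bookkeeping from the proof of Theorem~\ref{th:lt}, reducing everything to a single ``bad'' event, and then to pay for the weakening of \eqref{ssp} to \eqref{wsp} through the defect set $\Omega\setminus E$. First I would record the same decomposition used for Theorem~\ref{th:lt}: since $P(\xi_n\in\mathfrak{A}\cup\mathfrak{B},\ \eta_n\in\mathfrak{A}\cup\mathfrak{B})\to1$, splitting $P(\eta_n\in\mathfrak{A})$ and $P(\xi_n\in\mathfrak{A})$ according to where the other statistic lands yields
\[
P(\xi_n\in\mathfrak{A})-P(\eta_n\in\mathfrak{A})=P(\xi_n\in\mathfrak{A},\ \eta_n\in\mathfrak{B})-P(\eta_n\in\mathfrak{A},\ \xi_n\in\mathfrak{B})+o(1).
\]
Discarding the nonnegative first term, the bound \eqref{wct} reduces to the single estimate $\limsup_{n\to\infty}P(\eta_n\in\mathfrak{A},\ \xi_n\in\mathfrak{B})\leqslant P(\Omega\setminus E)$.

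Writing $B_n=\{\eta_n\in\mathfrak{A},\ \xi_n\in\mathfrak{B}\}$, I would split $P(B_n)=P(B_n\cap E)+P(B_n\cap(\Omega\setminus E))\leqslant P(B_n\cap E)+P(\Omega\setminus E)$, so that it remains to show $\limsup_n P(B_n\cap E)=0$. This is where the separation property enters. On $B_n$ the ordering hypothesis gives $\psi_n(\eta_n,\X_n)\geqslant\psi_n(\xi_n,\X_n)$ with $\eta_n\in\mathfrak{A}$ and $\xi_n\in\mathfrak{B}$, while on $E$ every fixed pair $(x,y)\in\mathfrak{A}\times\mathfrak{B}$ eventually satisfies $\psi_n(x,\X_n)<\psi_n(y,\X_n)$ by \eqref{wsp}. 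The goal is to argue that these force each $\omega\in E$ to lie in $B_n$ only finitely often, so that $\mathbbm{1}_{B_n\cap E}\to0$ pointwise; dominated convergence (the indicators being bounded by the integrable constant $1$) then delivers $P(B_n\cap E)\to0$.

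The main obstacle is exactly this pointwise step, and it is what separates the present theorem from its strong counterpart. In Theorem~\ref{th:lt} the event \eqref{ssp} compares $\sup_{\mathfrak{A}}\psi_n$ with $\inf_{\mathfrak{B}}\psi_n$ at the \emph{same} index $n$, so it instantly dominates $\psi_n$ evaluated at the random points $\eta_n,\xi_n$; here \eqref{wsp} is only a pointwise statement, holding for each fixed $(x,y)$ as the index tends to infinity, whereas $(\eta_n,\xi_n)$ move with $n$. Closing this gap requires uniformizing the pairwise thresholds: on $E$ one wants a single $N(\omega)$ beyond which $\psi_n(x,\X_n)<\psi_n(y,\X_n)$ holds simultaneously for all relevant $(x,y)$, which for $n\geqslant N(\omega)$ contradicts $\psi_n(\eta_n,\X_n)\geqslant\psi_n(\xi_n,\X_n)$ and forces $\omega\notin B_n$. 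Such a uniform $N(\omega)$ is available as a finite maximum whenever $\mathfrak{A}$ and $\mathfrak{B}$ are finite or countable — the setting of the subset- and variable-selection applications — in which case $E$ moreover has full probability and \eqref{wct} sharpens to the $\geqslant0$ conclusion of Theorem~\ref{th:lt}. I expect the delicate point to be whether this finitely-often claim survives over an uncountable decision space, where the uniformization can break down; the term $-P(\Omega\setminus E)$ on the right of \eqref{wct} is then the slack that accounts for the uncountable intersection $E=\cap_{x\in\mathfrak{A},\,y\in\mathfrak{B}}E(x,y)$ having probability below one, and I would scrutinize the author's handling of precisely this case.
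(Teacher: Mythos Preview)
Your decomposition and reduction are exactly the paper's. The paper also writes
\[
P(\eta_n\in\mathfrak{A})\leqslant P(\xi_n\in\mathfrak{A})+P(\eta_n\in\mathfrak{A},\ \xi_n\in\mathfrak{B})+o(1)
\]
and then argues that $P(\eta_n\in\mathfrak{A},\ \xi_n\in\mathfrak{B})\leqslant P(\Omega\setminus E)+o(1)$, which is precisely your target $\limsup_n P(B_n\cap E)=0$.

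Where you pause and flag the uniformization difficulty, the paper simply asserts the step. It claims that if $\omega\in\{\eta_n\in\mathfrak{A},\ \xi_n\in\mathfrak{B}\}\cap E$ then
\[
\limsup_{n\to\infty}\big[\psi_n(\eta_n(\X_n(\omega)),\X_n(\omega))-\psi_n(\xi_n(\X_n(\omega)),\X_n(\omega))\big]<0,
\]
contradicting the ordering hypothesis, and concludes that $\omega\notin B_n\cap E$ for large $n$. But membership in $E=\bigcap_{x,y}E(x,y)$ only guarantees \eqref{wsp} for each \emph{fixed} pair $(x,y)$; it does not by itself deliver the displayed inequality for the moving pair $(\eta_n(\omega),\xi_n(\omega))$. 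This is exactly the gap you identified, and the paper does not close it—there is no additional argument beyond what you have written.

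So your reading is correct: the proof is rigorous when $\mathfrak{A}\times\mathfrak{B}$ is countable (one takes the supremum of the thresholds $N(x,y,\omega)$, and $P(\Omega\setminus E)=0$ automatically), which is the content of the ensuing corollary and covers all of the paper's applications. In the uncountable case the paper's argument is no more complete than yours; your caution is warranted, and you should not expect to find a further idea in the paper that resolves it.
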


\begin{proof} For $\omega\in\{\eta_n\in \mathfrak{A},\ \xi_n\in\mathfrak{B}\}\subset\Omega$, if
$\omega\in E\cap\{\xi_n\in\mathfrak{A}\cup\mathfrak{B},\ \eta_n\in\mathfrak{A}\cup\mathfrak{B}\}$, then
\begin{eqnarray*}\limsup_{n\to\infty}\left[\psi_n\big(\eta_n(\X_n(\omega)),\X_n(\omega)\big)-\psi_n\big(\xi_n(\X_n(\omega)),\X_n(\omega)\big)\right]
<0.\end{eqnarray*}This is a contradiction. Therefore, $\omega\notin E\cap\{\xi_n\in\mathfrak{A}\cup\mathfrak{B},\ \eta_n\in\mathfrak{A}\cup\mathfrak{B}\}$ for
sufficiently large $n$, which implies $P(\eta_n\in \mathfrak{A},\ \xi_n\in\mathfrak{B})\leqslant P(\Omega\setminus E)+1-P(\xi_n\in\mathfrak{A}\cup\mathfrak{B},\
\eta_n\in\mathfrak{A}\cup\mathfrak{B})$ for sufficiently large $n$. It follows that $P(\eta_n\in \mathfrak{A})= P(\eta_n\in\mathfrak{A},\ \xi_n\in
\mathfrak{B})+P(\eta_n\in \mathfrak{A},\ \xi_n\in \mathfrak{A})+P(\eta_n\in\mathfrak{A},\ \xi_n\notin \mathfrak{A}\cup\mathfrak{B})\leqslant P(\xi_n\in
\mathfrak{A})-P(\xi_n\in \mathfrak{A},\ \eta_n\in\mathfrak{B})+P(\eta_n\in \mathfrak{A},\ \xi_n\in \mathfrak{B})+1-P(\xi_n\in \mathfrak{A}\cup\mathfrak{B})\leqslant
P(\xi_n\in \mathfrak{A})+P(\Omega\setminus E)+1-P(\xi_n\in\mathfrak{A}\cup\mathfrak{B},\ \eta_n\in\mathfrak{A}\cup\mathfrak{B})+1-P(\xi_n\in
\mathfrak{A}\cup\mathfrak{B})$ for sufficiently large $n$, which completes the proof.
\end{proof}

If $P(\Omega\setminus E)$ in \eqref{wct} equals zero, then we can say BSP holds. Nevertheless, it is impossible to verify this condition in practice. A way for avoiding
this problem is to consider countable subsets, and we immediately obtain the following corollary.

\begin{corollary}\label{cor:w}Suppose that $\{\psi_n\}$ separates $\mathfrak{A}$ from $\mathfrak{B}$. For $n\in\mathbb{N}$, $\xi_n$ and $\eta_n$ are
statistics valued in a countable subset of $\mathfrak{D}$ satisfying $P(\xi_n\in\mathfrak{A}\cup\mathfrak{B},\ \eta_n\in\mathfrak{A}\cup\mathfrak{B})\to1$ as
$n\to\infty$. If $\psi_n(\xi_n,\X_n)\leqslant \psi_n(\eta_n,\X_n)$ for all $n$, then
$$\liminf_{n\to\infty}\left[P(\xi_n\in \mathfrak{A})-P(\eta_n\in \mathfrak{A})\right]\geqslant0.$$\end{corollary}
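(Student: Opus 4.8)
The plan is to deduce the corollary directly from the (Weak) Comparison Theorem (Theorem~\ref{th:lt2}) by using the countability hypothesis to force the defect term $P(\Omega\setminus E)$ to vanish. First I would fix a single countable set $C\subseteq\mathfrak{D}$ containing every value taken by $\xi_n$ and $\eta_n$ over all $n$; such a $C$ exists because each $\xi_n,\eta_n$ ranges in a countable subset of $\mathfrak{D}$, and a countable union of countable sets is countable. Writing $A_C=\mathfrak{A}\cap C$ and $B_C=\mathfrak{B}\cap C$, both countable, I would observe that on the event $\{\eta_n\in\mathfrak{A},\ \xi_n\in\mathfrak{B}\}$ the realized pair $(\eta_n(\omega),\xi_n(\omega))$ always lies in $A_C\times B_C$.

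Next I would revisit the proof of Theorem~\ref{th:lt2} and note that the only place the separation property \eqref{wsp} enters is through membership of $\omega$ in $E(x,y)$ for the specific pair $(x,y)=(\eta_n(\omega),\xi_n(\omega))$ realized at that $\omega$. Consequently the full (generally uncountable) intersection $E=\cap_{x\in\mathfrak{A},y\in\mathfrak{B}}E(x,y)$ may be replaced throughout by the restricted intersection $E'=\cap_{x\in A_C,\,y\in B_C}E(x,y)$ without altering any step: whenever the realized pair lies in $A_C\times B_C$, membership in $E'$ already supplies membership in the relevant $E(x,y)$. Running the argument of Theorem~\ref{th:lt2} verbatim with $E'$ in place of $E$ therefore yields $\liminf_{n\to\infty}[P(\xi_n\in\mathfrak{A})-P(\eta_n\in\mathfrak{A})]\geqslant-P(\Omega\setminus E')$.

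It then remains to show $P(\Omega\setminus E')=0$, and this is exactly where countability is indispensable. By the separation property each $E(x,y)$ has probability one, so $\Omega\setminus E'=\bigcup_{x\in A_C,\,y\in B_C}(\Omega\setminus E(x,y))$ is a countable union of null sets and hence null by countable subadditivity; the analogous conclusion fails for the uncountable union behind $E$, which is precisely the gap the corollary closes. Substituting $P(\Omega\setminus E')=0$ into the bound from the previous step gives the stated inequality. I expect the only genuine obstacle to be the bookkeeping in the second paragraph, namely confirming that the comparison argument of Theorem~\ref{th:lt2} invokes separation only at the realized pairs, so that the countable intersection $E'$ genuinely suffices; once that is checked, the measure-theoretic step is entirely routine.
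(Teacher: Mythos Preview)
Your proposal is correct and matches the paper's intended approach: the paper simply remarks that restricting to countable subsets makes the troublesome set $\Omega\setminus E$ null and declares the corollary immediate, which is precisely the argument you spell out via $E'=\bigcap_{x\in A_C,\,y\in B_C}E(x,y)$ and countable subadditivity. Your added care in checking that the proof of Theorem~\ref{th:lt2} only invokes separation at the realized pair $(\eta_n(\omega),\xi_n(\omega))\in A_C\times B_C$ is exactly the verification the paper leaves implicit.
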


\begin{remark}\label{rk:ctb}
For many cases, $\mathfrak{D}$ is a separable set. It is usually enough to consider the decisions in its countable and dense subset in practice. For example, to estimate
a scalar parameter, we can always consider the estimators valued in the set of all rational numbers, which is countable and dense in $\mathbb{R}$. In this sense, BSP
follows from the separation property of $\{\psi_n\}$.
\end{remark}

\subsection{When the decision space depends on $n$}\label{subsec:don}
\hskip\parindent \vspace{-0.6cm}

In this subsection the decision space of interest $\mathfrak{D}_n$ depends on the sample size $n$. Suppose that we need to consider the optimization problem
\begin{equation*}\min_{x\in\mathfrak{D}_n}\psi_n(x,\X_n),\end{equation*}where the objective function $\psi_n$ is a map from
$\mathfrak{D}_n\times\mathcal{X}_n$ to $\mathbb{R}$. Different from the results in Section \ref{subsec:non}, we need to consider sequences of decisions. Denote
$\mathfrak{D}=\prod_{n=1}^\infty\mathfrak{D}_n$, and let $\mathfrak{A}$ be the subset of $\mathfrak{D}$ that contains sequences of good decisions. The statistical
property of a decision sequence we concern here is whether it lies in $\mathfrak{A}$. Let $\mathfrak{B}$ be another subset of $\mathfrak{D}$. Denote
$\mathfrak{D}^*=\mathfrak{A}\cup\mathfrak{B}$. The definition and theoretical results are parallel to those in Section \ref{subsec:non}, and the proofs are almost the
same. We therefore omit the proofs in this subsection.

\begin{definition}\label{def:bds2} We say that $\{\psi_n\}$ (weakly) separates $\mathfrak{A}$ from $\mathfrak{B}$, or $\{\psi_n\}$ has the (weak)
separation property with respect to $\mathfrak{A}$ and $\mathfrak{B}$, if for all $\{x_n\}\in \mathfrak{A},\ \{y_n\}\in\mathfrak{B}$,
\begin{equation}\limsup_{n\to\infty}\left[\psi_n(x_n,\X_n)-\psi_n(y_n,\X_n)\right]<0\quad\text{(a.s.)}.\label{wsp2}\end{equation}
Furthermore, suppose that $\mathfrak{A}$ and $\mathfrak{B}$ can be written as $\mathfrak{A}=\prod_{n=1}^\infty\mathfrak{A}_n$ and
$\mathfrak{B}=\prod_{n=1}^\infty\mathfrak{B}_n$, where $\mathfrak{A}_n$ and $\mathfrak{B}_n$ are subsets of $\mathfrak{D}_n$. We say that $\{\psi_n\}$ strongly separates
$\mathfrak{A}$ from $\mathfrak{B}$, or $\{\psi_n\}$ has the strong separation property with respect to $\mathfrak{A}$ and $\mathfrak{B}$, if as $n\to\infty$,
\begin{equation*} P\left(\sup_{x\in \mathfrak{A}_n}\psi_n(x,\X_n)<\inf_{y\in
\mathfrak{B}_n}\psi_n(y,\X_n)\right)\to1.\end{equation*}
\end{definition}

\begin{theorem}[Strong Comparison Theorem]\label{th:lts} Suppose that $\mathfrak{A}$ and $\mathfrak{B}$ can be written as
$\mathfrak{A}=\prod_{n=1}^\infty\mathfrak{A}_n$ and $\mathfrak{B}=\prod_{n=1}^\infty\mathfrak{B}_n$, and $\{\psi_n\}$ strongly separates $\mathfrak{A}$ from
$\mathfrak{B}$. For two sequences of statistics $\{\xi_n\}$ and $\{\eta_n\}$ valued in $\mathfrak{D}^*$, if $\psi_n(\xi_n,\X_n)\leqslant \psi_n(\eta_n,\X_n)$ for all
$n$, then
$$\liminf_{n\to\infty}\left[P(\xi_n\in \mathfrak{A}_n)-P(\eta_n\in \mathfrak{A}_n)\right]\geqslant0$$ and
$$P\big(\{\xi_n\}\in \mathfrak{A}\big)\geqslant P\big(\{\eta_n\}\in \mathfrak{A}\big).$$\end{theorem}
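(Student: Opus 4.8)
The plan is to mirror the proof of Theorem \ref{th:lt}, exploiting the product structure $\mathfrak{A}=\prod_n\mathfrak{A}_n$, $\mathfrak{B}=\prod_n\mathfrak{B}_n$, so that the whole-sequence membership $\{x_n\}\in\mathfrak{A}$ is equivalent to the coordinatewise membership $x_n\in\mathfrak{A}_n$ for every $n$. I would first abbreviate the separation events by writing $S_n=\{\sup_{x\in\mathfrak{A}_n}\psi_n(x,\X_n)<\inf_{y\in\mathfrak{B}_n}\psi_n(y,\X_n)\}$, so that strong separation reads $P(S_n)\to1$. Because $\{\xi_n\}$ and $\{\eta_n\}$ take values in $\mathfrak{D}^*=\mathfrak{A}\cup\mathfrak{B}$, every realization of each sequence lies entirely in $\mathfrak{A}$ or entirely in $\mathfrak{B}$; in particular $\xi_n\in\mathfrak{A}_n\cup\mathfrak{B}_n$ and $\eta_n\in\mathfrak{A}_n\cup\mathfrak{B}_n$ surely, for each $n$. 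This sure per-coordinate membership is exactly what plays the role of the hypothesis $P(\xi_n\in\mathfrak{A}\cup\mathfrak{B},\,\eta_n\in\mathfrak{A}\cup\mathfrak{B})\to1$ in the $n$-free theorem.

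The core observation, serving both conclusions, is the single-$n$ incompatibility: on $S_n$ one cannot have $\eta_n\in\mathfrak{A}_n$ and $\xi_n\in\mathfrak{B}_n$ at once, since that would force $\psi_n(\eta_n,\X_n)\leqslant\sup_{x\in\mathfrak{A}_n}\psi_n(x,\X_n)<\inf_{y\in\mathfrak{B}_n}\psi_n(y,\X_n)\leqslant\psi_n(\xi_n,\X_n)$, contradicting the standing hypothesis $\psi_n(\xi_n,\X_n)\leqslant\psi_n(\eta_n,\X_n)$. Hence $\{\eta_n\in\mathfrak{A}_n,\ \xi_n\in\mathfrak{B}_n\}\subseteq\Omega\setminus S_n$, so $P(\eta_n\in\mathfrak{A}_n,\ \xi_n\in\mathfrak{B}_n)\leqslant P(\Omega\setminus S_n)\to0$. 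For the first conclusion I would then decompose $P(\eta_n\in\mathfrak{A}_n)$ and $P(\xi_n\in\mathfrak{A}_n)$ over the sure partition $\{\xi_n\in\mathfrak{A}_n\}$ versus $\{\xi_n\in\mathfrak{B}_n\}$ (respectively $\{\eta_n\in\mathfrak{A}_n\}$ versus $\{\eta_n\in\mathfrak{B}_n\}$), exactly as in Theorem \ref{th:lt}, to reach $P(\xi_n\in\mathfrak{A}_n)-P(\eta_n\in\mathfrak{A}_n)\geqslant-P(\eta_n\in\mathfrak{A}_n,\ \xi_n\in\mathfrak{B}_n)\geqslant-P(\Omega\setminus S_n)$, and finally let $n\to\infty$.

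For the second, whole-sequence conclusion the key point is that the fixed event $G=\{\{\eta_n\}\in\mathfrak{A},\ \{\xi_n\}\in\mathfrak{B}\}$ satisfies $G\subseteq\{\eta_n\in\mathfrak{A}_n,\ \xi_n\in\mathfrak{B}_n\}\subseteq\Omega\setminus S_n$ for every single $n$, whence $P(G)\leqslant\inf_n P(\Omega\setminus S_n)=0$. Since the sequences are valued in $\mathfrak{A}\cup\mathfrak{B}$, I would then write $\{\{\eta_n\}\in\mathfrak{A}\}\subseteq\{\{\xi_n\}\in\mathfrak{A}\}\cup G$ and take probabilities to get $P(\{\eta_n\}\in\mathfrak{A})\leqslant P(\{\xi_n\}\in\mathfrak{A})+P(G)=P(\{\xi_n\}\in\mathfrak{A})$, as required. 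The main obstacle, and really the only subtle step, is this passage from the per-$n$ separation hypothesis to a statement about the single, $n$-independent event $G$: strong separation only gives $P(S_n)\to1$, not that all the $S_n$ hold simultaneously, and it is precisely the product structure of $\mathfrak{A}$ and $\mathfrak{B}$ that lets one bound the fixed event $G$ by $\Omega\setminus S_n$ for each individual $n$ and then take the infimum. Everything else is the bookkeeping already displayed in the proof of Theorem \ref{th:lt}.
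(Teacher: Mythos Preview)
Your proposal is correct and follows exactly the route the paper indicates: the paper omits the proof entirely, stating only that it is ``almost the same'' as that of Theorem~\ref{th:lt}, and your argument faithfully carries out that parallel, including the extra step needed for the whole-sequence inequality (bounding the fixed event $G$ by $\Omega\setminus S_n$ for each $n$ and passing to the infimum), which is the one genuinely new ingredient relative to Theorem~\ref{th:lt}.
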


\begin{theorem}\label{th:con2}Under the conditions in Theorem \ref{th:lts},
if $\xi_n=\arg\min_{x\in\mathfrak{D}^*}\psi_n(x,\X_n)$ exists, then
$$\lim_{n\to\infty}P(\xi_n\in\mathfrak{A}_n)\to1.$$ \end{theorem}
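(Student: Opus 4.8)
The plan is to imitate the short proof of Theorem \ref{th:con}, but carried out at the level of the $n$-th coordinate sets $\mathfrak{A}_n$ and $\mathfrak{B}_n$ rather than the product sets $\mathfrak{A}$ and $\mathfrak{B}$. First I would fix the reading of the hypothesis $\xi_n=\arg\min_{x\in\mathfrak{D}^*}\psi_n(x,\X_n)$: since $\psi_n$ takes its first argument in $\mathfrak{D}_n$, the minimization is over the $n$-th coordinate available in $\mathfrak{D}^*=\mathfrak{A}\cup\mathfrak{B}=\prod_{n}\mathfrak{A}_n\cup\prod_n\mathfrak{B}_n$, i.e. over $\mathfrak{A}_n\cup\mathfrak{B}_n$. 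The point of this reading is that it supplies, for free, the containment $\xi_n\in\mathfrak{A}_n\cup\mathfrak{B}_n$, which is why no separate hypothesis of the form $P(\xi_n\in\mathfrak{A}\cup\mathfrak{B})\to1$ (needed in Theorem \ref{th:con}, where the minimization ranges over all of $\mathfrak{D}$) has to be imposed here.

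The key step is the pathwise inclusion
$$\Big\{\sup_{x\in\mathfrak{A}_n}\psi_n(x,\X_n)<\inf_{y\in\mathfrak{B}_n}\psi_n(y,\X_n)\Big\}\subset\{\xi_n\in\mathfrak{A}_n\}.$$
To establish it, I would take $\omega$ in the left-hand event and suppose for contradiction that $\xi_n\in\mathfrak{B}_n$. Then $\psi_n(\xi_n,\X_n)\geqslant\inf_{y\in\mathfrak{B}_n}\psi_n(y,\X_n)>\sup_{x\in\mathfrak{A}_n}\psi_n(x,\X_n)\geqslant\inf_{x\in\mathfrak{A}_n}\psi_n(x,\X_n)$, whereas the minimizing property of $\xi_n$ over $\mathfrak{A}_n\cup\mathfrak{B}_n\supseteq\mathfrak{A}_n$ gives $\psi_n(\xi_n,\X_n)\leqslant\inf_{x\in\mathfrak{A}_n}\psi_n(x,\X_n)$, forcing $\inf_{x\in\mathfrak{A}_n}\psi_n<\psi_n(\xi_n,\X_n)\leqslant\inf_{x\in\mathfrak{A}_n}\psi_n$, a contradiction. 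Hence $\xi_n\notin\mathfrak{B}_n$, and combined with $\xi_n\in\mathfrak{A}_n\cup\mathfrak{B}_n$ this yields $\xi_n\in\mathfrak{A}_n$. Taking probabilities in the inclusion and invoking the component-wise strong separation of Definition \ref{def:bds2}, namely $P(\sup_{x\in\mathfrak{A}_n}\psi_n<\inf_{y\in\mathfrak{B}_n}\psi_n)\to1$, immediately gives $P(\xi_n\in\mathfrak{A}_n)\to1$, which is the assertion.

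I do not anticipate a genuine obstacle: the argument is a coordinatewise repetition of Theorem \ref{th:con}, and because the strong separation hypothesis is already stated directly in probability, none of the almost-sure/$\limsup$ subtleties of the weak case (Theorem \ref{th:lt2}) intrude. The only points demanding care are bookkeeping ones. One must read the $\arg\min$ as acting on $\mathfrak{A}_n\cup\mathfrak{B}_n$, so that the containment of $\xi_n$ in $\mathfrak{A}_n\cup\mathfrak{B}_n$ is automatic from existence of the minimizer; and one should keep the standing assumption $\mathfrak{A}_n\neq\emptyset$ in mind, since otherwise the separation event has probability one while $\{\xi_n\in\mathfrak{A}_n\}$ is empty and the displayed inclusion would fail. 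As $\mathfrak{A}_n$ is the set of good decisions, its nonemptiness is a harmless hypothesis.
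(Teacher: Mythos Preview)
Your proposal is correct and is exactly the argument the paper has in mind: the paper omits the proof of Theorem~\ref{th:con2}, stating that it is ``almost the same'' as that of Theorem~\ref{th:con}, and your coordinatewise adaptation---the inclusion $\{\sup_{x\in\mathfrak{A}_n}\psi_n<\inf_{y\in\mathfrak{B}_n}\psi_n\}\subset\{\xi_n\in\mathfrak{A}_n\}$ together with the automatic containment $\xi_n\in\mathfrak{A}_n\cup\mathfrak{B}_n$ coming from the $\arg\min$ over $\mathfrak{D}^*$---is precisely that proof. Your reading of the $\arg\min$ and your remark on the nonemptiness of $\mathfrak{A}_n$ are both appropriate.
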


\begin{theorem}[(Weak) Comparison Theorem]\label{th:lt2} Suppose that $\{\psi_n\}$ (weakly) separates $\mathfrak{A}$ from $\mathfrak{B}$.
Denote the set of probability one where \eqref{wsp2} holds by $E(\{x_n\},\{y_n\})$ and write $E=\cap_{\{x_n\}\in \mathfrak{A},\{y_n\}\in
\mathfrak{B}}E(\{x_n\},\{y_n\})$. For two sequences of statistics $\{\xi_n\}$ and $\{\eta_n\}$ valued in $\mathfrak{D}^*$, if $\psi_n(\xi_n,\X_n)\leqslant
\psi_n(\eta_n,\X_n)$ for all $n$, then $$P\big(\{\xi_n\}\in \mathfrak{A}\big)-P\big(\{\eta_n\}\in \mathfrak{A}\big)\geqslant-P(\Omega\setminus E).$$ Furthermore, if
$\mathfrak{A}$ and $\mathfrak{B}$ can be written as $\mathfrak{A}=\prod_{n=1}^\infty\mathfrak{A}_n$ and $\mathfrak{B}=\prod_{n=1}^\infty\mathfrak{B}_n$, then for
sufficiently large $n$,
$$P(\xi_n\in \mathfrak{A}_n)-P(\eta_n\in \mathfrak{A}_n)\geqslant-P(\Omega\setminus E).$$
\end{theorem}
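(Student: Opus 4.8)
The plan is to reproduce, now at the level of whole decision sequences, the contradiction argument that proved the (weak) Comparison Theorem in Section~\ref{subsec:non}. The central object is the event $G=\{\{\eta_n\}\in\mathfrak{A},\ \{\xi_n\}\in\mathfrak{B}\}$, and the first goal is to show $G\cap E=\emptyset$, i.e.\ $G\subset\Omega\setminus E$. Fix $\omega\in G\cap E$. Then the two realized sequences $\{\eta_n(\omega)\}$ and $\{\xi_n(\omega)\}$ are \emph{deterministic} elements of $\mathfrak{A}$ and $\mathfrak{B}$ respectively, so $\omega$ lies in the particular probability-one set $E(\{\eta_n(\omega)\},\{\xi_n(\omega)\})$, since $E$ is by definition the intersection of all such sets. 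Applying \eqref{wsp2} at $\omega$ with $\{x_n\}=\{\eta_n(\omega)\}$ and $\{y_n\}=\{\xi_n(\omega)\}$ gives $\limsup_{n\to\infty}[\psi_n(\eta_n(\omega),\X_n(\omega))-\psi_n(\xi_n(\omega),\X_n(\omega))]<0$. This contradicts the standing hypothesis $\psi_n(\xi_n,\X_n)\leqslant\psi_n(\eta_n,\X_n)$ for all $n$, which forces the bracketed quantity to be nonnegative for every $n$ and hence the limsup to be nonnegative. Therefore $G\cap E=\emptyset$ and $P(G)\leqslant P(\Omega\setminus E)$.

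For the first displayed inequality I would then decompose the two probabilities, using that $\{\xi_n\}$ and $\{\eta_n\}$ are valued in $\mathfrak{D}^*=\mathfrak{A}\cup\mathfrak{B}$ (so membership in $\mathfrak{A}$ or $\mathfrak{B}$ is exhaustive, $\mathfrak{A}$ and $\mathfrak{B}$ being disjoint). Writing $P(\{\eta_n\}\in\mathfrak{A})=P(\{\eta_n\}\in\mathfrak{A},\{\xi_n\}\in\mathfrak{A})+P(G)$ and $P(\{\xi_n\}\in\mathfrak{A})=P(\{\xi_n\}\in\mathfrak{A},\{\eta_n\}\in\mathfrak{A})+P(\{\xi_n\}\in\mathfrak{A},\{\eta_n\}\in\mathfrak{B})$ and subtracting, the common term cancels, leaving $P(\{\eta_n\}\in\mathfrak{A})-P(\{\xi_n\}\in\mathfrak{A})\leqslant P(G)\leqslant P(\Omega\setminus E)$; rearranging gives the claim. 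Note that, unlike the $n$-free setting, no $\liminf$ is needed here because ``valued in $\mathfrak{D}^*$'' holds surely by hypothesis rather than only asymptotically, so there are no $o(1)$ remainders.

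For the second, coordinatewise inequality I would exploit the product structure $\mathfrak{A}=\prod_n\mathfrak{A}_n$, $\mathfrak{B}=\prod_n\mathfrak{B}_n$ to pull a single-coordinate event back to a sequence event. Since each realized sequence lies entirely in $\mathfrak{A}$ or entirely in $\mathfrak{B}$, and (with $\mathfrak{A}_n,\mathfrak{B}_n$ disjoint) these alternatives are distinguished coordinatewise, the event $\{\eta_n\in\mathfrak{A}_n,\ \xi_n\in\mathfrak{B}_n\}$ is contained in $\{\{\eta_m\}\in\mathfrak{A},\ \{\xi_m\}\in\mathfrak{B}\}=G\subset\Omega\setminus E$ for $n$ large, whence $P(\eta_n\in\mathfrak{A}_n,\ \xi_n\in\mathfrak{B}_n)\leqslant P(\Omega\setminus E)$. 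The same cancellation as above, now using $P(\xi_n\in\mathfrak{A}_n\cup\mathfrak{B}_n)=1$, then yields $P(\xi_n\in\mathfrak{A}_n)-P(\eta_n\in\mathfrak{A}_n)\geqslant-P(\Omega\setminus E)$.

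The main obstacle I anticipate is exactly this last reduction: the separation hypothesis \eqref{wsp2} is intrinsically a tail statement about entire sequences, through the $\limsup$, whereas the second conclusion concerns membership at a single index $n$. The product decomposition of $\mathfrak{A}$ and $\mathfrak{B}$ is what makes the bridge possible—it lets a single coordinate's location be promoted to the location of the whole sequence—and it is also why the coordinatewise bound is asserted only ``for sufficiently large $n$'', since in applications the coordinate sets $\mathfrak{A}_n,\mathfrak{B}_n$ (and their disjointness) are typically meaningful only past some threshold. I would take care to record the disjointness of $\mathfrak{A}$ and $\mathfrak{B}$ (implicit already in Section~\ref{subsec:non}) explicitly, as both decompositions rely on it.
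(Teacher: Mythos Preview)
Your proposal is correct and follows essentially the same contradiction-and-decomposition strategy that the paper uses for Theorem~\ref{th:lt2} in Section~\ref{subsec:non}; the paper omits the proof in Section~\ref{subsec:don}, saying only that it is ``almost the same,'' and your write-up supplies exactly that parallel argument. Your treatment is in fact slightly cleaner than the paper's sketch: by working directly with the sequence event $G=\{\{\eta_n\}\in\mathfrak{A},\ \{\xi_n\}\in\mathfrak{B}\}$ and establishing $G\cap E=\emptyset$ once, you avoid the notational ambiguity in the paper's proof of the $n$-free version, and you correctly flag the implicit disjointness of $\mathfrak{A}_n$ and $\mathfrak{B}_n$ needed to promote a single-coordinate event to the sequence event $G$.
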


\begin{corollary}\label{cor:w}Suppose that $\{\psi_n\}$ weakly separates $\mathfrak{A}$ from $\mathfrak{B}$.
For two sequences of statistics $\{\xi_n\}$ and $\{\eta_n\}$ valued in a countable subset of $\mathfrak{D}^*$, if $\psi_n(\xi_n,\X_n)\leqslant \psi_n(\eta_n,\X_n)$ for
all $n$, then $$P\big(\{\xi_n\}\in \mathfrak{A}\big)\geqslant P\big(\{\eta_n\}\in \mathfrak{A}\big).$$ Furthermore, if $\mathfrak{A}$ and $\mathfrak{B}$ can be written
as $\mathfrak{A}=\prod_{n=1}^\infty\mathfrak{A}_n$ and $\mathfrak{B}=\prod_{n=1}^\infty\mathfrak{B}_n$, then for sufficiently large $n$,
$$P(\xi_n\in \mathfrak{A}_n)\geqslant P(\eta_n\in \mathfrak{A}_n).$$\end{corollary}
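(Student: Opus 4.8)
The plan is to obtain the corollary as the countable specialization of the (weak) Comparison Theorem (Theorem~\ref{th:lt2}) proved just above, the only point to verify being that the deficiency term $P(\Omega\setminus E)$ can be forced to zero once the statistics are countably valued. Recall that $E=\cap_{\{x_n\}\in\mathfrak{A},\{y_n\}\in\mathfrak{B}}E(\{x_n\},\{y_n\})$ is in general an intersection over an \emph{uncountable} family of probability-one sets, so $\Omega\setminus E$ need not be null; this is precisely why the general theorem yields only the bound $-P(\Omega\setminus E)$. The whole content of the corollary is to remove this defect.

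First I would isolate the countable family of sequences actually realized by the statistics. By hypothesis $\{\xi_n\}$ and $\{\eta_n\}$ take values in a countable subset $\mathfrak{C}\subset\mathfrak{D}^*$. I would then replace $E$ by the smaller intersection $E'=\cap E(\{x_n\},\{y_n\})$ taken only over $\{x_n\}\in\mathfrak{C}\cap\mathfrak{A}$ and $\{y_n\}\in\mathfrak{C}\cap\mathfrak{B}$. Since $\mathfrak{C}$ is countable, $E'$ is a countable intersection of sets of probability one, whence $P(E')=1$, i.e. $P(\Omega\setminus E')=0$.

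Next I would rerun the contradiction step of Theorem~\ref{th:lt2} with $E'$ in place of $E$, noting that this step only ever invokes $E(\{x_n\},\{y_n\})$ at the realized pair $\{x_n\}=\{\eta_n(\omega)\}$, $\{y_n\}=\{\xi_n(\omega)\}$. Concretely, on $E'\cap\{\{\eta_n\}\in\mathfrak{A},\ \{\xi_n\}\in\mathfrak{B}\}$ the realized sequences lie in $\mathfrak{C}\cap\mathfrak{A}$ and $\mathfrak{C}\cap\mathfrak{B}$, so $\omega\in E(\{\eta_n(\omega)\},\{\xi_n(\omega)\})$ and $\limsup_n[\psi_n(\eta_n,\X_n)-\psi_n(\xi_n,\X_n)]<0$; this contradicts $\psi_n(\xi_n,\X_n)\leqslant\psi_n(\eta_n,\X_n)$, which forces that limsup to be $\geqslant0$. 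Hence $\{\{\eta_n\}\in\mathfrak{A},\ \{\xi_n\}\in\mathfrak{B}\}\subset\Omega\setminus E'$ is null. Feeding $P(\Omega\setminus E')=0$ into the decomposition $P(\{\xi_n\}\in\mathfrak{A})-P(\{\eta_n\}\in\mathfrak{A})=P(\{\xi_n\}\in\mathfrak{A},\{\eta_n\}\in\mathfrak{B})-P(\{\eta_n\}\in\mathfrak{A},\{\xi_n\}\in\mathfrak{B})$ (valid because both sequences lie in $\mathfrak{A}\cup\mathfrak{B}$) gives the first assertion $P(\{\xi_n\}\in\mathfrak{A})\geqslant P(\{\eta_n\}\in\mathfrak{A})$.

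For the ``furthermore'' component-wise claim I would invoke the product structure $\mathfrak{A}=\prod_n\mathfrak{A}_n$, $\mathfrak{B}=\prod_n\mathfrak{B}_n$, under which $\{\xi_n\}\in\mathfrak{A}$ (resp. $\mathfrak{B}$) amounts to $\xi_n\in\mathfrak{A}_n$ (resp. $\xi_n\in\mathfrak{B}_n$) for every $n$, and then apply the corresponding clause of Theorem~\ref{th:lt2} with $P(\Omega\setminus E')=0$. The step I expect to be delicate is exactly this translation between single-index membership $\{\xi_n\in\mathfrak{A}_n\}$ and whole-sequence membership $\{\{\xi_n\}\in\mathfrak{A}\}$: a point can have its $n$-th coordinate in $\mathfrak{A}_n$ while the full sequence sits in $\mathfrak{B}$ whenever $\mathfrak{A}_n\cap\mathfrak{B}_n\neq\varnothing$, and it is controlling these coordinate overlaps that forces the coordinate conclusion to be asserted only ``for sufficiently large $n$.'' Once $P(\Omega\setminus E')=0$ is in hand, the remaining bookkeeping is the same event decomposition as above, now carried out at the level of the $n$-th coordinate, so the genuine obstacle is the countable-intersection reduction establishing $P(\Omega\setminus E')=0$.
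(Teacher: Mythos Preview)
Your proposal is correct and matches the paper's intended approach. The paper omits the proof entirely (stating that the Section~3.2 results are parallel to Section~3.1 ``and the proofs are almost the same''), and the Section~3.1 version of the corollary is likewise presented as immediate from the Weak Comparison Theorem; your argument makes explicit the one point that needs saying, namely that restricting the intersection defining $E$ to the countable range $\mathfrak{C}$ of the statistics gives a set $E'$ of full probability, after which Theorem~\ref{th:lt2} applies with zero deficiency. One cosmetic remark: when you write ``the smaller intersection $E'$'' you mean an intersection over a \emph{smaller index set}, so $E'\supset E$ as events; the wording is momentarily confusing but your use of $E'$ is correct.
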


\begin{remark}\label{rk:ctb2}
In practice, we only have the observed values of $\xi_n$ and $\eta_n$ for a specified $n$. It can be assumed that they are respectively from two sequences $\{\xi_n\}$
and $\{\eta_n\}$ valued in a countable subset of $\mathfrak{D}^*$, especially when the decision space $\mathfrak{D}_n$ is a finite set for each $n$. In this sense, as in
Remark \ref{rk:ctb}, the separation property of $\{\psi_n\}$ is sufficient to imply BSP.
\end{remark}

\begin{remark}\label{rk:2s}It can be seen that there may be some confusion in notation if the situation where the decision
space does not depend on $n$ is viewed as a special case of the situation where the decision space depends on $n$. For example, if we consider the estimation problem of
$\theta\in\Theta$ and handle it as a special case of the second situation, then we need to view a decision $\theta\in\Theta$ as a sequence
$(\theta,\ldots,\theta,\ldots,)$, and thus the decision space $\mathfrak{D}=\{(\theta,\ldots,\theta,\ldots,):\theta\in\Theta\}$, which has not the natural form
$\mathfrak{D}=\prod_{n=1}^\infty\Theta$. The form of $\mathfrak{A}$ is also strange.\end{remark}

In the rest of this paper, we omit the sample $\X_n$ in $\psi_n(\cdot,\X_n)$ and write $\psi_n(\cdot)$ for emphasizing the decision variable.

\section{Greater likelihood principle}\label{sec:smle}
\hskip\parindent \vspace{-0.8cm}

We have shown in Section \ref{sec:th} that the separation properties of an objective function can imply the corresponding BSP. Despite simplicity, these results are
effective to establish BSP since many objective functions indeed possess the separation properties under reasonable conditions. From this section to Section
\ref{sec:bsr}, we show that BSP holds for several important statistical optimization problems by use of them. This section discusses the problem associated with maximum
likelihood estimation.

\subsection{Separation properties of the likelihood function}\label{subsec:smleres}
\hskip\parindent \vspace{-0.6cm}

Let the data $X_1,\ldots,X_n$ be i.i.d. from a probability density function (p.d.f.) $f(\cdot,\theta)$ with respect to a $\sigma$-finite measure $\mu$ on $\mathbb{R}^p$,
where $\theta$ lies in the parameter space $\Theta\subset\mathbb{R}^q$. The likelihood function is $$l_n(\theta)=\prod_{i=1}^nf(X_i,\theta),$$and the maximum likelihood
estimator (MLE) is the solution to the optimization problem \begin{equation}\label{mle}\max_{\theta\in\Theta}\,l_n(\theta).\end{equation}For convenience, we write
\eqref{mle} as the problem of minimizing the negative
log-likelihood\begin{equation}\label{lmle}\min_{\theta\in\Theta}\big[-\log\big(l_n(\theta)\big)\big].\end{equation}The MLE is commonly used due to its well-known high
asymptotic efficiency. However, when the negative log-likelihood has multiple local minima, the MLE is difficult to compute (Gan and Jiang 1999).

When estimation accuracy is concerned, it is common to use the probability of lying in a neighborhood of the true parameter to evaluate an estimator. For a consistent
estimator, this probability converges to one as the sample size goes to infinity. Following this way, we define ``good" decisions in discussing BSP for the MLE problem,
and show that, for two estimators, the better one with greater likelihood has larger probability of lying in a sufficiently small neighborhood of $\theta_0$ under
regularity conditions, where $\theta_0$ denotes the true parameter. This result, called \emph{greater likelihood principle} in this paper, is a special case of BSP and
can be viewed as a supplementary of the maximum likelihood principle.

By the results in Section \ref{sec:th}, we can establish BSP via the separation properties of the objective function. Some assumptions and lemmas are needed.

Denote
\begin{equation}\label{ieq}s(\theta,\theta_0)=-\int\log\big(f(x,\theta)\big)f(x,\theta_0)d\mu(x).\end{equation}

\begin{assumption}\label{as:ud} For all $\theta_1,\theta_2\in\Theta$, $f(\cdot,\theta_1)=f(\cdot,\theta_2)$ (a.s.) implies $\theta_1=\theta_2$.\end{assumption}

\begin{assumption}\label{as:g} For all $\theta\in\Theta$, $\int|\log\big(f(x,\theta)\big)|f(x,\theta)d\mu(x)<\infty$.\end{assumption}

\begin{assumption}\label{as:g2} For all $\theta_0\in\Theta$, $s(\cdot,\theta_0)$ is continuous on $\Theta$ and $\liminf_{x\to b}s(x,\theta_0)>s(\theta_0,\theta_0)$
for all $b\in \mathcal{C}^*(\Theta)\setminus\Theta$, where $\mathcal{C}^*(\Theta)=\mathcal{C}(\Theta)$ if $\Theta$ is bounded and
$\mathcal{C}^*(\Theta)=\mathcal{C}(\Theta)\cup\{\infty\}$ otherwise. Here $\mathcal{C}(\Theta)$ denotes the closure of $\Theta$.\end{assumption}

\begin{lemma}\label{lemma:fmin} Let $h$ be a continuous function defined in $D\subset\mathbb{R}^q$. Suppose that $h$ has a unique minimum $x_0$, i.e., for all
$x\neq x_0$, $h(x)>h(x_0)$. Furthermore, for all $b\in \mathcal{C}^*(D)\setminus D$, $\liminf_{x\to b}h(x)>h(x_0)$. Then for all $\epsilon>0$, there exists $\delta>0$
such that $\{x\in D:\ h(x)-h(x_0)\leqslant\delta\}\subset B(x_0,\epsilon)$, where $B(x_0,\epsilon)=\{x\in\mathbb{R}^q:\ \|x-x_0\|\leqslant\epsilon\}$.\end{lemma}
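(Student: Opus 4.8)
The plan is to argue by contradiction, extracting a sequence along which $h$ approaches its minimum value while staying bounded away from $x_0$, and then locating a limit point of that sequence that violates one of the two hypotheses: the uniqueness of the minimizer, or the prescribed boundary behaviour. First I would fix $\epsilon>0$ and suppose the conclusion fails. Then for each $n$ the sublevel set $\{x\in D:\ h(x)-h(x_0)\leqslant 1/n\}$ is not contained in $B(x_0,\epsilon)$, so I can choose $x_n\in D$ with $h(x_n)-h(x_0)\leqslant 1/n$ and $\|x_n-x_0\|>\epsilon$. This produces a sequence $\{x_n\}\subset D$ with $h(x_n)\to h(x_0)$ and $\|x_n-x_0\|>\epsilon$ for every $n$.

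Next I would pass to a subsequence that converges in $\mathcal{C}^*(D)$. Viewing $\{x_n\}$ as a sequence in the one-point compactification $\mathbb{R}^q\cup\{\infty\}$, which is compact, it has a subsequence $x_{n_k}\to x^*$ with $x^*\in\mathcal{C}^*(D)$; here $x^*$ is either a genuine point of $\mathbb{R}^q$ or the symbol $\infty$, and in the former case $\|x^*-x_0\|\geqslant\epsilon$, so $x^*\neq x_0$. I then split into two cases. If $x^*\in D$, continuity of $h$ gives $h(x^*)=\lim_k h(x_{n_k})=h(x_0)$, which contradicts the uniqueness of the minimizer since $x^*\neq x_0$. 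If instead $x^*\in\mathcal{C}^*(D)\setminus D$, then the boundary hypothesis yields $\liminf_{x\to x^*}h(x)>h(x_0)$; but the points $x_{n_k}$ lie in $D$, are distinct from $x^*$ (as $x^*\notin D$), and converge to $x^*$ with $h(x_{n_k})\to h(x_0)$, forcing $\liminf_{x\to x^*}h(x)\leqslant h(x_0)$, a contradiction. Either way the assumption fails, which proves the claim.

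The main obstacle I anticipate is the uniform treatment of the point at infinity. Because $D$ may be unbounded, the extracted sequence need not be bounded, and the role of $\infty$ as an admissible boundary point in $\mathcal{C}^*(D)$ is exactly what legitimizes the compactness step. Concretely, one must either invoke the one-point compactification as above, or split the argument explicitly into a bounded case, where the Bolzano--Weierstrass theorem supplies a finite limit $x^*$ and the dichotomy $x^*\in D$ versus $x^*\in\mathcal{C}(D)\setminus D$ applies, and an unbounded case, where a subsequence escapes to infinity and the hypothesis $\liminf_{x\to\infty}h(x)>h(x_0)$ delivers the contradiction. Care is also needed to ensure that in the boundary case the approximating points remain in $D$ and differ from $x^*$, so that the $\liminf$ appearing in the boundary hypothesis genuinely bounds $h(x_0)$ from below.
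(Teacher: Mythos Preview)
Your proof is correct and follows essentially the same contradiction argument as the paper: extract a sequence $\{x_n\}$ with $h(x_n)\to h(x_0)$ that stays bounded away from $x_0$, then derive a contradiction from any limit point in $\mathcal{C}^*(D)$. If anything, your version is more careful than the paper's, which compresses the two cases (limit point in $D$ versus limit point in $\mathcal{C}^*(D)\setminus D$) into a single line and does not spell out the compactness step.
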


\begin{proof} For any sequence of positive numbers $\{a_n\}$ with $a_n\to0$ as $n\to\infty$, assume that there exist $x_n\in D$ and $\epsilon_0>0$ such that
$h(x_n)-h(x_0)\leqslant a_n$ but $|x_n-x_0|>\epsilon_0$. Therefore $h(x_n)\to h(x_0)$. Since any limit point of $\{x_n\}$ in $\mathcal{C}^*(D)$ cannot be $x_0$, this is
in contradiction to the condition that $x_0$ is the unique minimum of $h$.  \end{proof}

\begin{lemma}\label{lemma:fg} If Assumptions \ref{as:ud} and \ref{as:g} hold, then for all $\theta_0\in\Theta$, $s(\cdot,\theta_0)$ in \eqref{ieq},
as a function defined on $\Theta$, attains its minimum uniquely at $\theta_0$.
\end{lemma}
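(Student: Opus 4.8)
The plan is to recognize the difference $s(\theta,\theta_0)-s(\theta_0,\theta_0)$ as a Kullback--Leibler divergence of $f(\cdot,\theta_0)$ from $f(\cdot,\theta)$ and to show it is nonnegative, vanishing precisely when the two densities coincide. First I would fix $\theta_0\in\Theta$ and note that Assumption \ref{as:g}, applied with $\theta=\theta_0$, guarantees that $s(\theta_0,\theta_0)=-\int\log(f(x,\theta_0))f(x,\theta_0)d\mu(x)$ is finite, so the comparison of $s(\theta,\theta_0)$ against $s(\theta_0,\theta_0)$ is anchored at a finite value. Writing $P_0$ for the probability measure with density $f(\cdot,\theta_0)$ and setting $Y=f(X,\theta)/f(X,\theta_0)$ on $\{f(\cdot,\theta_0)>0\}$, I would rewrite
$$s(\theta,\theta_0)-s(\theta_0,\theta_0)=\int\log\!\big(f(x,\theta_0)/f(x,\theta)\big)f(x,\theta_0)d\mu(x)=E_{P_0}[-\log Y],$$
a quantity that remains well defined in $[0,\infty]$ even when $s(\theta,\theta_0)=+\infty$ (the only source of divergence), since $s(\theta_0,\theta_0)$ is finite and the forthcoming bound will be a lower bound.

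The core estimate rests on two inequalities. Because $-\log$ is convex, Jensen's inequality yields $E_{P_0}[-\log Y]\geqslant-\log E_{P_0}[Y]$; and because $E_{P_0}[Y]=\int_{\{f(\cdot,\theta_0)>0\}}f(x,\theta)d\mu(x)\leqslant\int f(x,\theta)d\mu(x)=1$, we have $-\log E_{P_0}[Y]\geqslant0$. Chaining these gives $s(\theta,\theta_0)\geqslant s(\theta_0,\theta_0)$, so $\theta_0$ is a minimizer. Alternatively I could route the whole argument through the elementary bound $\log t\leqslant t-1$ (with equality only at $t=1$), which avoids measure-theoretic fuss about Jensen and makes the equality analysis below more transparent.

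The delicate part is uniqueness, where I must track when \emph{both} inequalities become equalities. Equality in Jensen forces $Y$ to be $P_0$-almost surely a constant $c$, that is, $f(\cdot,\theta)=c\,f(\cdot,\theta_0)$ $\mu$-a.e.\ on $\{f(\cdot,\theta_0)>0\}$; integrating this identity shows $E_{P_0}[Y]=c$, and equality in the second inequality ($E_{P_0}[Y]=1$) then pins down $c=1$. Comparing total masses forces $\int_{\{f(\cdot,\theta_0)=0\}}f(x,\theta)d\mu(x)=0$, hence $f(\cdot,\theta)=0$ $\mu$-a.e.\ on $\{f(\cdot,\theta_0)=0\}$ as well, so that $f(\cdot,\theta)=f(\cdot,\theta_0)$ $\mu$-a.e.\ on all of $\mathbb{R}^p$. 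At this point Assumption \ref{as:ud} (identifiability) delivers $\theta=\theta_0$. I expect this equality analysis---correctly combining the two equality conditions and then invoking identifiability off the support---to be the main obstacle, together with the bookkeeping needed to keep every integral well defined when $s(\theta,\theta_0)$ is possibly infinite.
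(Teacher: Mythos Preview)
Your argument is correct and is precisely the classical information-inequality proof that the paper defers to by citing Wald (1949) and Van der Vaart (1998); the paper does not supply its own proof, and your Jensen/Kullback--Leibler route is exactly what one finds in those references. Your handling of the potential $+\infty$ value of $s(\theta,\theta_0)$ and the two-step equality analysis (constancy from Jensen, then unit mass forcing the constant to be $1$, then identifiability) is clean and complete.
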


The above lemma and its proof can be found in many places; see, e.g., Wald (1949) and Van der Vaart (1998).

Under Assumptions \ref{as:ud}--\ref{as:g2}, by Lemmas \ref{lemma:fmin} and \ref{lemma:fg}, for all $\epsilon>0$, there exists $\delta(\epsilon)>0$ such that
$\{\theta\in\Theta:\ s(\theta,\theta_0)-s(\theta_0,\theta_0)\leqslant\delta(\epsilon)\}\subset B(\theta_0,\epsilon)$. Denote $B_s(\theta_0,\epsilon)=\{\theta\in\Theta:\
s(\theta,\theta_0)-s(\theta_0,\theta_0)\leqslant\delta(\epsilon)\}$ and consider\begin{equation}\label{ab}\mathfrak{A}^\epsilon=B_s(\theta_0,\epsilon),\
\mathfrak{B}^\epsilon=\Theta\setminus B_s(\theta_0,\epsilon).\end{equation} Note that for all $\theta\in\Theta$,
$$\frac{-\log(l_n(\theta))}{n}=-\frac{\log\big(f(X_1,\theta)\big)+\cdots+\log\big(f(X_n,\theta)\big)}{n}\to s(\theta,\theta_0)\quad \text{(a.s.)}.$$
We can immediately obtain the
following theorem by Definition \ref{def:bds} and Remark \ref{rk:scale}.

\begin{theorem} \label{th:bsi} Under Assumptions \ref{as:ud}--\ref{as:g2}, for all $\epsilon>0$, $\{-\log(l_n)\}$ separates
$\mathfrak{A}^\epsilon$ from $\mathfrak{B}^\epsilon$ in \eqref{ab}. \end{theorem}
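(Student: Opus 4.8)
The plan is to verify the weak separation property \eqref{wsp} of Definition \ref{def:bds} directly, using the almost-sure convergence $-\log(l_n(\theta))/n \to s(\theta,\theta_0)$ displayed just above the statement together with the scaling device of Remark \ref{rk:scale}. The essential observation is that the two sets in \eqref{ab} are a sub-level set of $s(\cdot,\theta_0)$ and its complement, so membership forces a strict gap between the limiting objective values at any two points drawn from them.

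First I would fix arbitrary $x\in\mathfrak{A}^\epsilon$ and $y\in\mathfrak{B}^\epsilon$ and record the key numerical inequality. By $\mathfrak{A}^\epsilon=B_s(\theta_0,\epsilon)$ we have $s(x,\theta_0)-s(\theta_0,\theta_0)\leqslant\delta(\epsilon)$, while by $\mathfrak{B}^\epsilon=\Theta\setminus B_s(\theta_0,\epsilon)$ we have $s(y,\theta_0)-s(\theta_0,\theta_0)>\delta(\epsilon)$. Combining these two bounds through the common threshold $s(\theta_0,\theta_0)+\delta(\epsilon)$ gives $s(x,\theta_0)\leqslant s(\theta_0,\theta_0)+\delta(\epsilon)<s(y,\theta_0)$, hence $s(x,\theta_0)-s(y,\theta_0)<0$. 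Note that Assumptions \ref{as:ud}--\ref{as:g2} have already done their work upstream, through Lemmas \ref{lemma:fmin} and \ref{lemma:fg}, in guaranteeing that $\delta(\epsilon)$ exists and that $B_s(\theta_0,\epsilon)$ is a genuine neighborhood of $\theta_0$; at this stage I only need the bare set inclusions.

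Next I would apply the pointwise strong law. The displayed convergence gives $-\log(l_n(x))/n\to s(x,\theta_0)$ on a set of probability one and $-\log(l_n(y))/n\to s(y,\theta_0)$ on another set of probability one; on their intersection, which still has probability one, the scaled difference converges,
$$\frac{1}{n}\big[-\log(l_n(x))+\log(l_n(y))\big]\longrightarrow s(x,\theta_0)-s(y,\theta_0)\quad\text{(a.s.)},$$
so its $\limsup$ equals the strictly negative number $s(x,\theta_0)-s(y,\theta_0)$ almost surely. Taking $a_n=n$, for which $a_n^{-1}=1/n=O(1)$, Remark \ref{rk:scale} then yields exactly \eqref{wsp}, namely $\limsup_{n\to\infty}[-\log(l_n(x))+\log(l_n(y))]<0$ almost surely, for every such pair. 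Since $x$ and $y$ were arbitrary, $\{-\log(l_n)\}$ weakly separates $\mathfrak{A}^\epsilon$ from $\mathfrak{B}^\epsilon$.

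There is essentially no hard step here: because the conclusion is the \emph{weak} (pointwise) separation property, I only ever invoke the strong law at the two fixed parameters $x$ and $y$ and never need any uniformity over $\mathfrak{A}^\epsilon$ or $\mathfrak{B}^\epsilon$, so the a.s.\ convergence for fixed $\theta$ suffices and no empirical-process machinery enters. The one point demanding a little care is the direction of the inequality: one must confirm the gap is strict, which comes from the strict inequality in the definition of $\mathfrak{B}^\epsilon$, since \eqref{wsp} asks for a strictly negative $\limsup$ rather than merely a nonpositive one.
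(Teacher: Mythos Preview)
Your proposal is correct and follows exactly the route the paper intends: the paper states only that the theorem follows ``immediately'' from Definition~\ref{def:bds} and Remark~\ref{rk:scale} together with the displayed pointwise a.s.\ convergence $-\log(l_n(\theta))/n\to s(\theta,\theta_0)$, and you have simply written out those steps, including the observation that $\mathfrak{A}^\epsilon$ and $\mathfrak{B}^\epsilon$ are a sub-level set of $s(\cdot,\theta_0)$ and its complement so that the limiting gap is strictly negative.
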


\begin{remark}The conditions for the separation property of $\{-\log(l_n)\}$ are weaker than those for the consistency of MLE in Wald (1949). Furthermore,
Our results in this section neither rely on the existence of an MLE nor require that $\Theta$ is an open subset.\end{remark}

 Although the
separation property is sufficient for BSP in practical use by Remark \ref{rk:ctb}, the strong separation property is still of theoretical interest. We next discuss it
for the likelihood function. Some stronger conditions are needed.

\begin{assumption}\label{as:cs} The family $\{f(\cdot,\theta)\}_{\theta\in\Theta}$ has a common support set $\s{S}=\{x\in\mathbb{R}^p:\ 0<f(x,\theta)<\infty\}$.
For all $x\in\s{S}$, $f(x,\cdot)$ is continuous on $\Theta$.\end{assumption}

\begin{assumption}\label{as:cg} For any $\theta\in\Theta$ and any compact subset ${K}$ of $\Theta$, $$\int\sup_{\phi\in{K}}\big|\log\big(f(x,\phi)\big)\big|
f(x,\theta)d\mu(x)<\infty.$$\end{assumption}

Take $\mathfrak{A}^\epsilon$ as in \eqref{ab}. Instead of $\mathfrak{B}^\epsilon$ in \eqref{ab}, take $\mathfrak{B}_*^\epsilon$ as any compact subset of $\Theta\setminus
B_s(\theta_0,\epsilon)$.

\begin{theorem} \label{th:sbsi} Under Assumptions \ref{as:ud}, \ref{as:g2}, \ref{as:cs}, and \ref{as:cg}, for all $\epsilon>0$, $\{-\log(l_n)\}$
strongly separates $\mathfrak{A}^\epsilon$ from $\mathfrak{B}_*^\epsilon$. \end{theorem}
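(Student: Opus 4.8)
The plan is to pass to the scaled objective $M_n(\theta)=-\log(l_n(\theta))/n=-\tfrac1n\sum_{i=1}^n\log f(X_i,\theta)$, which by Remark \ref{rk:scale} (with $a_n=n$) reduces the claim to
$$P\Big(\sup_{\theta\in\mathfrak{A}^\epsilon}M_n(\theta)<\inf_{\theta\in\mathfrak{B}_*^\epsilon}M_n(\theta)\Big)\to1.$$
The strategy is to upgrade the pointwise strong law $M_n(\theta)\to s(\theta,\theta_0)$ (a.s.) to a \emph{uniform} one over the compact set $K=\mathfrak{A}^\epsilon\cup\mathfrak{B}_*^\epsilon$, and then to separate the two pieces at the level of the limit $s(\cdot,\theta_0)$, where the construction of $B_s(\theta_0,\epsilon)$ already forces a strict gap. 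First I would record that $K$ is compact: $\mathfrak{B}_*^\epsilon$ is compact by hypothesis, while $\mathfrak{A}^\epsilon=B_s(\theta_0,\epsilon)$ is bounded by Lemma \ref{lemma:fmin} and, for $\delta(\epsilon)$ taken small enough, is kept a positive distance from $\mathcal{C}^*(\Theta)\setminus\Theta$ by the boundary condition in Assumption \ref{as:g2}, hence closed in $\mathbb{R}^q$ and compact. Here the unique-minimum property of $s(\cdot,\theta_0)$ from Lemma \ref{lemma:fg} (valid because Assumption \ref{as:ud} holds and Assumption \ref{as:cg} implies Assumption \ref{as:g}) is what legitimizes the whole construction.

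The heart of the argument, and the step I expect to be the main obstacle, is the uniform strong law
$$\sup_{\theta\in K}\big|M_n(\theta)-s(\theta,\theta_0)\big|\to0\quad\text{(a.s.)}.$$
I would prove it by a bracketing/covering argument. Assumption \ref{as:cs} gives that $X_i\in\mathcal{S}$ almost surely and that $\theta\mapsto\log f(X_i,\theta)$ is continuous on $\Theta$, while Assumption \ref{as:cg} (applied with the data-generating parameter $\theta_0$) supplies an integrable envelope $\int\sup_{\phi\in K}|\log f(x,\phi)|\,f(x,\theta_0)\,d\mu(x)<\infty$, which in particular makes $s(\cdot,\theta_0)$ finite on $K$. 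For each $\theta\in K$ and $\rho>0$ set $g^{+}(x,\theta,\rho)=\sup_{\phi\in K,\,\|\phi-\theta\|\leqslant\rho}\log f(x,\phi)$ and let $g^{-}$ be the same with the supremum replaced by an infimum; continuity together with the envelope and dominated convergence give $\int g^{\pm}(x,\theta,\rho)\,f(x,\theta_0)\,d\mu(x)\to-s(\theta,\theta_0)$ as $\rho\downarrow0$. Covering the compact set $K$ by finitely many such balls and applying the ordinary strong law to the finitely many bracketing functions $g^{\pm}$ squeezes $\tfrac1n\sum_i\log f(X_i,\cdot)$ uniformly, which yields the displayed uniform convergence.

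Finally I would cash in the uniform law against the separation built into $B_s(\theta_0,\epsilon)$. By definition $\sup_{\theta\in\mathfrak{A}^\epsilon}s(\theta,\theta_0)\leqslant s(\theta_0,\theta_0)+\delta(\epsilon)$, whereas $\mathfrak{B}_*^\epsilon\subset\Theta\setminus B_s(\theta_0,\epsilon)$ is compact and $s(\cdot,\theta_0)$ is continuous (Assumption \ref{as:g2}), so the infimum $\inf_{\theta\in\mathfrak{B}_*^\epsilon}s(\theta,\theta_0)$ is attained and is \emph{strictly} larger than $s(\theta_0,\theta_0)+\delta(\epsilon)$; call the resulting gap $\Delta>0$. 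On the almost-sure event where the uniform law holds, for all large $n$ the uniform error is below $\Delta/3$, so $\sup_{\mathfrak{A}^\epsilon}M_n\leqslant\sup_{\mathfrak{A}^\epsilon}s+\Delta/3$ and $\inf_{\mathfrak{B}_*^\epsilon}M_n\geqslant\inf_{\mathfrak{B}_*^\epsilon}s-\Delta/3$, whose difference is at least $\Delta/3>0$. Hence the event $\{\sup_{\mathfrak{A}^\epsilon}M_n<\inf_{\mathfrak{B}_*^\epsilon}M_n\}$ occurs for all large $n$ on a set of probability one, so its probability tends to $1$, which by Remark \ref{rk:scale} is exactly the strong separation of $\{-\log(l_n)\}$ asserted in the theorem.
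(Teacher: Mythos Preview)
Your proof is correct and follows the same high-level architecture as the paper: scale by $n$, establish a uniform strong law over the relevant compact pieces, and then exploit the strict gap between $\sup_{\mathfrak{A}^\epsilon}s$ and $\inf_{\mathfrak{B}_*^\epsilon}s$ that the definition of $B_s(\theta_0,\epsilon)$ together with compactness of $\mathfrak{B}_*^\epsilon$ provides. The difference is purely in how the uniform law is obtained. The paper views $\theta\mapsto-\log f(X_i,\theta)$ as an i.i.d.\ sequence of random elements in the separable Banach space $C(B_s(\theta_0,\epsilon))$ (and likewise over $\mathfrak{B}_*^\epsilon$) and invokes the strong law of large numbers in Banach spaces (Ledoux--Talagrand), with Assumption~\ref{as:cg} supplying the Bochner integrability. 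You instead run a classical Wald-type bracketing/covering argument: local sup/inf brackets, dominated convergence for the brackets via the envelope in Assumption~\ref{as:cg}, a finite cover of $K$, and the ordinary scalar SLLN on the finitely many bracketing functions. Your route is more elementary and self-contained, while the paper's is shorter once the Banach-space machinery is granted; both rely on exactly the same ingredients (continuity from Assumption~\ref{as:cs}, envelope from Assumption~\ref{as:cg}, compactness). Your explicit justification that $\mathfrak{A}^\epsilon$ is compact---using Lemma~\ref{lemma:fmin} for boundedness and the boundary clause of Assumption~\ref{as:g2} to keep the sublevel set away from $\mathcal{C}^*(\Theta)\setminus\Theta$---is in fact more careful than the paper, which tacitly assumes this when passing to $C(B_s(\theta_0,\epsilon))$.
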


\begin{proof} Consider the Banach space of all continuous function on $B_s(\theta_0,\epsilon)$,
which is separable since $B(\theta_0,\epsilon)$ is a compact subset of $\mathbb{R}^q$. By Assumption \ref{as:cs},
$-\log\big(f(X_1,\theta)\big),\ldots,-\log\big(f(X_n,\theta)\big)$ are i.i.d. random variables valued in this Banach space. By Assumption \ref{as:cg} and the law of
large numbers in Banach spaces (see, e.g., Corollary 7.10 in Ledoux and Talagrand 1980),
$$\sup_{\theta\in \mathfrak{A}^\epsilon}\big|\big[-\log(l_n(\theta))\big]- s(\theta,\theta_0)\big|\to0\quad\text{(a.s.)},$$ which implies
\begin{equation}\label{k1}\sup_{\theta\in
\mathfrak{A}^\epsilon}\big[-\log(l_n(\theta))\big]\to \sup_{\theta\in \mathfrak{A}^\epsilon}s(\theta,\theta_0)\quad\text{(a.s.)}.\end{equation}Similarly, we
have\begin{equation}\label{k2}\inf_{\theta\in \mathfrak{B}_*^\epsilon}\big[-\log(l_n(\theta))\big]\to \inf_{\theta\in
\mathfrak{B}_*^\epsilon}s(\theta,\theta_0)\quad\text{(a.s.)}.\end{equation}Since $\mathfrak{B}_*^\epsilon$ is compact, there exists $\delta_1>0$ such that
$s(\theta,\theta_0)\geqslant s(\theta_0,\theta_0)+\delta+\delta_1$ for all $\theta\in\mathfrak{B}_*^\epsilon$. Consequently, by \eqref{k1} and \eqref{k2},
$$P\left(\sup_{\theta\in \mathfrak{A}^\epsilon}\big[-\log(l_n(\theta))\big]<\inf_{\theta\in \mathfrak{B}_*^\epsilon}\big[-\log(l_n(\theta))\big]\right)\to1,$$
which completes the
proof.\end{proof}

\begin{remark}If Assumptions \ref{as:cs} and \ref{as:cg} hold, it can be proved that $s(\cdot,\theta_0)$ is continuous on $\Theta$,
which is assumed in Assumption \ref{as:g2}. \end{remark}

By the two Strong Comparison Theorems, Theorems \ref{th:lt} and \ref{th:lts}, the strong separation property of the objective function provides a more strict guarantee
of BSP than its weak analogue. However, at a price of this strictness, more restrictive conditions are required for verifying the strong separation property. By Theorem
\ref{th:lt}, for comparing two estimators $\xi_n$ and $\eta_n$ via the strong separation property stated in Theorem \ref{th:sbsi}, we require
$P(\xi_n\in\mathfrak{A}^\epsilon\cup\mathfrak{B}_*^\epsilon)\to1$ and $P(\eta_n\in\mathfrak{A}^\epsilon\cup\mathfrak{B}_*^\epsilon)\to1$.

\subsection{A simulation study}\label{subsec:smlesimu}
\hskip\parindent \vspace{-0.6cm}

In this subsection we conduct a small simulation study to verify the greater likelihood principle in finite-sample cases. Consider a location family with the density
function
$$f(x,\theta)=f_0(x-\theta),$$where $\theta\in{\mathbb{R}}$ is the unknown parameter we want to estimate based on the i.i.d. observations $X_1,\ldots,X_n$. Three types
of $f_0$ are used: the standard normal distribution, $t$ distribution with 5 degrees of freedom, and the Cauchy distribution with density
$f_0(x)=\big[\pi(x^2+1)\big]^{-1}$. It is known that the likelihood functions for the latter two cases often have multiple maximum. We compare three simple methods, the
sample median, the trimmed mean removing $50\%$ extreme values, and the method that selects the better one of the two estimators with greater likelihood as the final
estimator. Given the true parameter $\theta_0=0$, we repeat 10,000 times to compute mean squares errors (MSEs) of the three estimators for various sample sizes, and the
results are displayed in Table \ref{tab:cau}. It can be seen that the results follow the greater likelihood principle well: the ``better" solution always yields the
smallest MSEs among the three estimators.

\begin{table} \caption{\label{tab:cau}MSE comparisons in Section \ref{subsec:smlesimu}} \centering\vspace{2mm}
{\begin{tabular}{*{10}{lllccccccc}}\hline&\quad\quad&&\quad\quad&\multicolumn{6}{c}{$n$}\\\cline{5-10}&&&&$10$&$15$&$20$&$25$&$30$&$35$\\\hline \multirow{3}{*}{Normal}
&&     median &&0.1361&0.1019&0.0728&0.0623&0.0502&0.0447\\
&&trimmed mean&&0.1113&0.0798&0.0588&0.0472&0.0393&0.0343\\
&&better      &&0.1093&0.0776&0.0574&0.0459&0.0382&0.0333\\\hline \multirow{3}{*}{$t_5$}
&&     median &&0.1588&0.1159&0.0824&0.0701&0.0568&0.0508\\
&&trimmed mean&&0.1393&0.0961&0.0698&0.0559&0.0465&0.0409\\
&&better      &&0.1383&0.0951&0.0694&0.0555&0.0463&0.0405\\\hline \multirow{3}{*}{Cauchy}
&&     median &&0.3360&0.2056&0.1427&0.1109&0.0905&0.0804\\
&&trimmed mean&&0.4929&0.2236&0.1628&0.1221&0.1027&0.0827\\
&&better      &&0.3260&0.1857&0.1333&0.1001&0.0845&0.0720\\\hline
\end{tabular}}
\end{table}

\section{Better subsample selection under contaminated models}\label{sec:boe}
\hskip\parindent \vspace{-0.8cm}

Let $\s{F}^p$ denote the set of all cumulative distribution functions (c.d.f.) on $\mathbb{R}^p$. Suppose that we are interested in making inferences for the unknown
parameter $\theta$ of a parametric family $\{F_\theta\}_{\theta\in\Theta}$ based on i.i.d. observations $X_1,\ldots,X_n$, where $F_\theta\in\s{F}^p$ for all $\theta$ and
the parameter space $\Theta$ is a subset of $\mathbb{R}^q$. When the observations include some outliers, a commonly used assumption for describing this situation is that
the dataset is a randomly mixed batch of $n$ ``good" observations and outliers, and that each single observation with probability $1-\epsilon$ is a ``good" one, with
probability $\epsilon$ an outlier, where $\epsilon\in[0,1/2]$ (Huber 1981). Under this assumption, the observations are drawn from the contaminated population, i.e.,
\begin{equation}\label{cm}X_1,\ldots,X_n\ \text{i.i.d.}\ \sim (1-\epsilon)F_\theta+\epsilon G,\end{equation} where $G\in\s{F}^p$ is the contamination distribution. Here we
consider a simplified model by removing the randomness of $X_i$ being a good observation or an outlier. Denote the set of all subsequences of $\{n\}_{n=1,2,\ldots}$ by
$\mathfrak{S}$, i.e., \begin{equation*} \mathfrak{S}=\big\{\{k_n\}:\ k_n\in\mathbb{N},\ k_1<k_2<\cdots\big\}.\end{equation*} Take a nondecreasing integer sequence
$\{l_n\}$ satisfying $l_n/n\to1-\epsilon$ as $n\to\infty$. For $\{k_n^0\}\in\mathfrak{S}$, let
\begin{equation}\label{a0}\s{A}_{0n}=\{k_{1}^0,\ldots,k_{l_n}^0\}\end{equation} denote the index set of all good observations. Assume that
\begin{equation}\label{cm2}X_1,\ldots,X_n\ \text{are independently drawn as}\ X_i\sim F_\theta\ \text{for}\ i\in\s{A}_{0n}\ \text{and}\ X_i\sim G\
\text{for}\ i\notin\s{A}_{0n}.\end{equation}The model \eqref{cm2} is asymptotically equivalent to \eqref{cm} in the sense that the two empirical distributions based on
the observations generated from both of them have the identical limit $(1-\epsilon)F_\theta+\epsilon G$ as $n\to\infty$.

\begin{remark}\label{rk:se} The assumption that $\s{A}_{0n}$ is a segment of a subsequence is technical. Under this assumption,
the observations can be viewed as a sequence of random variables, and thus some limit theory on sequences of random variables can be applied such as the strong law of
large numbers. Otherwise, we may have to consider the observations as triangle arrays, and more restrictive conditions are required to establish the corresponding
asymptotic results. On the practical aspect, this assumption is also reasonable.\end{remark}

The set $\s{A}_{0n}$ in \eqref{cm2} consists of the indices of all good observations. An ideal method for robust inferences is based on all good observations, i.e., we
first correctly identify $\s{A}_{0n}$. We refer to the method of selecting $\s{A}_{0n}$ by optimizing some criteria as \emph{best subsample selection}, parallel to best
subset regression in variable selection. The minimum covariance determinant estimate (Rousseeuw 1985) and least trimmed squares estimate (Rousseeuw 1984) are instances
of best subsample selection-based estimates. In general, it is impossible to exactly select $\s{A}_{0n}$ itself since $\epsilon$ is usually unknown. A practical purpose
is to select a subset of $\s{A}_{0n}$.

The estimates based on best subsample selection have high breakdown values (Hubert, Rousseeuw, and Van Aelst 2008), whereas their asymptotic properties are difficult to
derive. Limited results were obtained under uncontaminated models, i.e., $\epsilon=0$ in \eqref{cm}; see, e.g., Rousseeuw and Leroy (1987), Butler, Davies, and Jhun
(1993), and Agull\'{o}, Croux, and Van Aelst (2008). To the best of the author's knowledge, there is no work on the asymptotics of best subsample selection or related
estimates under contaminated models such as \eqref{cm} or \eqref{cm2}. In this section we discuss whether BSP for best subsample selection (asymptotically) holds under
model \eqref{cm2}.

The statistical optimization problem in best subsample selection can be formulated as follows. For all $n$, the decision space is
\begin{equation}\label{d}\mathfrak{D}_n=\{\s{A}\subset \mathbb{Z}_n:\ |\s{A}|=m\},\end{equation}
where $m=m_n<n$ is a pre-specified integer and $|\cdot|$ denotes cardinality. The best subsample of size $m$ is the solution
to\begin{equation}\label{bssp}\min_{\s{A}\in\mathfrak{D}_n}\psi_n(\s{A}),\end{equation} where $\psi_n$ is the objective function. Two types of objective functions will
be discussed in Sections \ref{subsec:smle} and \ref{subsec:smd}, respectively. It is often difficult to attain the global solution to \eqref{bssp}. In this section we
prove the separation property of the two types of objective functions. This property implies that better subsamples are more likely to be subsets of $\s{A}_{0n}$ by the
comparison theorems.

Denote
\begin{eqnarray}\begin{array}{l}\mathfrak{S}_0=\big\{\{k_n\}\in\mathfrak{S}:\ \{k_1,\ldots,k_m\}\subset\s{A}_{0n}\ \text{for all}\ n\big\},
\\\mathfrak{S}_1=\big\{\{k_n\}\in\mathfrak{S}:\ \sum_{i=1}^mI(k_i\notin\s{A}_{0n})/m\to\alpha>0\ \text{as}\ n\to\infty\big\},\end{array}\label{rob0}
\end{eqnarray}and
\begin{eqnarray}\begin{array}{l}\mathfrak{A}=\big\{\{\s{A}_n\}:\ \s{A}_n=\{k_1,\ldots,k_m\}\ \text{for all}\ n,\ \{k_n\}\in\mathfrak{S}_0\big\}_{n\in\mathbb{N}},
\\\mathfrak{B}=\big\{\{\s{A}_n\}:\ \s{A}_n=\{k_1,\ldots,k_m\}\ \text{for all}\ n,\ \{k_n\}\in\mathfrak{S}_1\big\}_{n\in\mathbb{N}},\end{array}\label{robab}
\end{eqnarray}where $I$ is the indicator function. In this section $\mathfrak{A}$ serves as the space of good decisions. In fact, the asymptotic
results in this section also hold if $\mathfrak{S}_0$ in \eqref{rob0} is replaced by $\mathfrak{S}_0=\big\{\{k_n\}\in\mathfrak{S}:\
\sum_{i=1}^mI(k_i\notin\s{A}_{0n})/m\to0\ \text{as}\ n\to\infty\big\}.$

\subsection{Selection by maximum likelihood}\label{subsec:smle}
\hskip\parindent \vspace{-0.6cm}

Suppose that $F_\theta$ and $G$ in \eqref{cm2} respectively have the p.d.f.'s, $f(\cdot,\theta)$ and $g(\cdot)$, with respect to a $\sigma$-finite measure on
$\mathbb{R}^p$, where $\theta\in\Theta\subset\mathbb{R}^q$. A natural idea is to select the best subsample by maximum likelihood, i.e., the objective function in
\eqref{bssp} is taken as
\begin{equation}\label{smle}\psi_n(\s{A})=\inf_{\theta\in\Theta}\left[-\sum_{i\in\s{A}}\log \big(f(X_i,\theta)\big)\right].\end{equation}
If
\begin{equation}\label{thetah}\hat{\theta}_{\s{A}}=\arg\min_{\theta\in\Theta}\left[-\sum_{i\in\s{A}}\log \big(f(X_i,\theta)\big)\right]\end{equation}
exists for all $\s{A}\in\mathfrak{D}_n$, then we can write
\begin{equation}\label{smle0}\psi_n(\s{A})=-\sum_{i\in\s{A}}\log \big(f(X_i,\hat{\theta}_{\s{A}})\big).\end{equation}
The minimum covariance determinant method (Rousseeuw 1985), which looks for the observations whose classical covariance matrix has the lowest possible determinant, can
be viewed as an instance of the method of minimizing \eqref{smle0} if the underlying model is assumed to be a multivariate normal distribution.

We need several assumptions to establish the separation property of $\psi_n$. Denote $$s_g(\theta)=-\int\log\big(f(x,\theta)\big)g(x)d\mu(x).$$
\begin{assumption}\label{as:m} For sufficiently large $n$, $m\leqslant|\s{A}_{0n}|$, and as $n\to\infty$, $m/n\to\tau\in[1/2,1-\epsilon]$.\end{assumption}

\begin{assumption}\label{as:g2s} For all $\alpha\in[0,\epsilon/(1-\tau)]$, $\arg\min_{\theta\in\Theta}(1-\alpha)s(\theta,\theta_0)+\alpha s_g(\theta)$ exists,
where $s$ is defined in \eqref{ieq}.
%$\theta^*$ lies in the interior set of $\Theta$,
%and $\liminf_{x\tob}[(1-\alpha)s(\theta,\theta_0)+\alpha s_g(\theta)]>(1-\alpha)s(\theta^*,\theta_0)+\alpha s_g(\theta^*)$ for all $b\in
%\mathcal{C}(\Theta)\setminus\Theta$.
\end{assumption}
Denote $\theta^*=\arg\min_{\theta\in\Theta}(1-\alpha)s(\theta,\theta_0)+\alpha s_g(\theta)$, and \begin{eqnarray*}&&\varphi(x,r)=\sup_{\theta\in\Theta\setminus
B(\theta^*,r)}f(x,\theta),
\\&&\varphi^*(x,r)=\left\{\begin{array}{ll}1,\quad\text{if}\ \varphi(x,r)\leqslant1,
\\\varphi(x,r),\ \text{otherwise}.\end{array}\right.\end{eqnarray*}

\begin{assumption}\label{as:css} The family $\{f(\cdot,\theta)\}_{\theta\in\Theta}$ has a common support set $\s{S}=\{x\in\mathbb{R}^p:\ 0<f(x,\theta)<\infty\}$.
For all $x\in\s{S}$, $f(x,\cdot)$ is continuous on $\Theta$, and $\lim_{\|\theta\|\to\infty}f(x,\theta)=0$.\end{assumption}

\begin{assumption}\label{as:ib} There exists $r^*>0$ such that $\int\log\big(\varphi^*(x,r^*)\big)f(x,\theta_0)d\mu(x)<\infty$ and
$\int\log\big(\varphi^*(x,r^*)\big)g(x)d\mu(x)<\infty$.\end{assumption}

\begin{assumption}\label{as:cfg} For all compact subset ${K}$ of $\Theta$, $$\int\sup_{\phi\in{K}}\big|\log\big(f(x,\phi)\big)\big|
f(x,\theta_0)d\mu(x)<\infty,\quad\int\sup_{\phi\in{K}}\big|\log\big(f(x,\phi)\big)\big| g(x)d\mu(x)<\infty.$$\end{assumption}

%\begin{assumption}\label{as:mlee} For all $\{k_n\}\in\mathfrak{S}_0\cup\mathfrak{S}_1$, $\arg\max_{\theta\in\Theta}\prod_{i\in\{k_1,\ldots,k_m\}}f(X_i,\theta)$
%exists (a.s.) for sufficiently large $n$.\end{assumption}

\begin{assumption}\label{as:ss} For all $\theta\in\Theta$ and $\alpha\in(0,\epsilon/(1-\tau)]$, $(1-\alpha)s(\theta,\theta_0)+\alpha
s_g(\theta)>s(\theta_0,\theta_0)$.\end{assumption}

\begin{lemma}\label{lemma:einf} Under Assumptions \ref{as:css} and \ref{as:ib}, we have
\begin{eqnarray}&&\lim_{r\to\infty}\int\log\big(\varphi(x,r_0)\big)f(x,\theta_0)d\mu(x)=-\infty,\label{lim1}
\\&&\lim_{r\to\infty}\int\log\big(\varphi(x,r_0)\big)g(x)d\mu(x)=-\infty.\label{lim2}\end{eqnarray}
\end{lemma}

\begin{proof} See Wald (1949) for the proof of \eqref{lim1}. and that of \eqref{lim2} is almost the same. \end{proof}

\begin{theorem} \label{th:bsj} Under Assumptions \ref{as:m}--\ref{as:ss}, $\{\psi_n\}$ in \eqref{smle} separates $\mathfrak{A}$ from $\mathfrak{B}$ in \eqref{robab}.
\end{theorem}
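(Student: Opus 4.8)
The plan is to verify the weak separation property of Definition~\ref{def:bds2}: for each fixed $\{\s{A}_n\}\in\mathfrak{A}$, with underlying subsequence $\{k_n\}\in\mathfrak{S}_0$, and each fixed $\{\s{A}_n'\}\in\mathfrak{B}$, with $\{k_n'\}\in\mathfrak{S}_1$ of limiting contamination fraction $\alpha>0$, I must show $\limsup_{n\to\infty}[\psi_n(\s{A}_n)-\psi_n(\s{A}_n')]<0$ almost surely. Since $m=m_n\to\infty$ under Assumption~\ref{as:m}, the scaling device of Remark~\ref{rk:scale} (with $a_n=m$, so $a_n^{-1}=O(1)$), which applies verbatim to \eqref{wsp2}, lets me divide by $m$; it then suffices to establish the two a.s.\ limits $\tfrac1m\psi_n(\s{A}_n)\to s(\theta_0,\theta_0)$ and $\tfrac1m\psi_n(\s{A}_n')\to c(\alpha)$, of which only
$$\limsup_{n\to\infty}\tfrac{1}{m}\psi_n(\s{A}_n)\leqslant s(\theta_0,\theta_0)\quad\text{and}\quad \liminf_{n\to\infty}\tfrac{1}{m}\psi_n(\s{A}_n')\geqslant c(\alpha)$$
are strictly needed, where $c(\alpha)=(1-\alpha)s(\theta^*,\theta_0)+\alpha s_g(\theta^*)=\min_{\theta}[(1-\alpha)s(\theta,\theta_0)+\alpha s_g(\theta)]$ has a minimizer $\theta^*$ by Assumption~\ref{as:g2s}. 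Then $\limsup\tfrac1m[\psi_n(\s{A}_n)-\psi_n(\s{A}_n')]\leqslant s(\theta_0,\theta_0)-c(\alpha)$, and Assumption~\ref{as:ss} forces $c(\alpha)>s(\theta_0,\theta_0)$ for $\alpha>0$, giving the strict negativity.

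For the good subsample, the fact that $\{k_n\}\in\mathfrak{S}_0$ pins each fixed index $k_i$ inside $\s{A}_{0n}$ for all large $n$ makes $X_{k_1},X_{k_2},\ldots$ i.i.d.\ from $f(\cdot,\theta_0)$. The easy half, $\limsup\tfrac1m\psi_n(\s{A}_n)\leqslant s(\theta_0,\theta_0)$, follows by bounding the infimum in \eqref{smle} by its value at $\theta_0$ and applying the strong law (integrability from Assumption~\ref{as:cfg} at $K=\{\theta_0\}$). The matching lower bound is the genuine work, a Wald-type compactification of $\Theta$ (here the relevant minimizer is $\theta^*=\theta_0$ because $\alpha=0$): fix $\eta>0$ and choose $r$ so large that Lemma~\ref{lemma:einf} together with the integrability of $\log\varphi^*$ in Assumption~\ref{as:ib} forces $\tfrac1m\sum_{i=1}^m[-\log\varphi(X_{k_i},r)]>s(\theta_0,\theta_0)$ eventually (a.s.); since $f(X_{k_i},\theta)\leqslant\varphi(X_{k_i},r)$ for $\theta\notin B(\theta_0,r)$, this controls the infimum over the tail region $\Theta\setminus B(\theta_0,r)$, while on the compact remainder $B(\theta_0,r)\cap\Theta$ I would invoke the uniform strong law in the separable Banach space $C(B(\theta_0,r)\cap\Theta)$, exactly as in the proof of Theorem~\ref{th:sbsi} and with the uniform integrability of Assumption~\ref{as:cfg}, to get $\inf_{\theta\in B(\theta_0,r)\cap\Theta}\tfrac1m\sum_{i=1}^m[-\log f(X_{k_i},\theta)]\to s(\theta_0,\theta_0)$ (the limiting infimum being $s(\theta_0,\theta_0)$ by Lemma~\ref{lemma:fg}).

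The bad subsample is treated by the same compactification, the only new feature being that the selected sample is a mixture: by the definition of $\mathfrak{S}_1$ the indices $k_1',\ldots,k_m'$ split into a good block (i.i.d.\ $f(\cdot,\theta_0)$) and a bad block (i.i.d.\ $g$) of relative sizes tending to $1-\alpha$ and $\alpha$, so a blockwise strong law gives $\tfrac1m\sum_{i=1}^m[-\log f(X_{k_i'},\theta)]\to(1-\alpha)s(\theta,\theta_0)+\alpha s_g(\theta)$ pointwise, upgraded to uniform convergence on $B(\theta^*,r)\cap\Theta$ by applying the Banach-space law of large numbers to each block (Assumption~\ref{as:cfg} furnishes the integrability against both $f(\cdot,\theta_0)$ and $g$); the tail region is now controlled using both limits of Lemma~\ref{lemma:einf}, one integrated against $f(\cdot,\theta_0)$ and the other against $g$, with $f\to0$ at infinity from Assumption~\ref{as:css} ensuring $\varphi(x,r)\to0$. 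I expect the main obstacle to be the bookkeeping in this compactification, i.e.\ ruling out that the minimizing $\theta$ escapes to the boundary or to infinity (precisely the role of Assumptions~\ref{as:css} and \ref{as:ib} and Lemma~\ref{lemma:einf}), and, more delicately, keeping track that the center $\theta^*$ of the balls $B(\theta^*,r)$ in the definition of $\varphi$ depends on $\alpha$ and must be re-chosen to match the mixture being analyzed ($\theta^*=\theta_0$ for the good subsample, and the contaminated minimizer for the bad one).
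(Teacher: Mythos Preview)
Your approach is essentially the paper's: scale by $m$, bound $\psi_n(\s{A}_n)/m$ above by plugging in $\theta_0$ and invoking the strong law, and for $\{\s{A}_n'\}\in\mathfrak{B}$ carry out the Wald compactification (Lemma~\ref{lemma:einf} to choose $r_0$, a blockwise SLLN on the mixture to control the tail $\Theta\setminus B(\theta^*,r_0)$, then the Banach-space LLN on the compact ball to pin the infimum at $(1-\alpha)s(\theta^*,\theta_0)+\alpha s_g(\theta^*)$). The one difference is that you propose to also prove the matching lower bound $\liminf\tfrac1m\psi_n(\s{A}_n)\geqslant s(\theta_0,\theta_0)$ for the good subsample; the paper omits this entirely, using only the upper bound \eqref{dcc}, which, as you yourself note, is all that is strictly needed once Assumption~\ref{as:ss} supplies $c(\alpha)>s(\theta_0,\theta_0)$.
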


\begin{proof} For $\{\s{A}_n\}\in\mathfrak{A}$, \begin{eqnarray}&&\frac{\psi_n(\s{A}_n)}{m}
=\inf_{\theta\in\Theta}\left[-\frac{1}{m}\sum_{i\in\s{A}_n}\log \big(f(X_i,\theta)\big)\right] \nonumber\\&&\leqslant-\frac{1}{m}\sum_{i\in\s{A}_n}\log
\big(f(X_i,\theta_0)\big)\to s(\theta_0,\theta_0)\quad\text{(a.s.)}.\label{dcc}\end{eqnarray}

Consider $\{\s{A}_n\}\in\mathfrak{B}$. By Lemma \ref{lemma:einf}, there exists $r_0$ such that
\begin{eqnarray*}&&\int\log\big(\varphi(x,r_0)\big)f(x,\theta_0)d\mu(x)<-s(\theta^*,\theta_0),\\&&
\int\log\big(\varphi(x,r_0)\big)g(x)d\mu(x)<-s_g(\theta^*).\end{eqnarray*} By the strong law of large numbers, \begin{eqnarray*}-\frac{1}{m}\sum_{i\in\s{A}_n}\log
\big(\varphi(X_i,r_0)\big)&\to&-(1-\alpha)\int\log\big(\varphi(x,r_0)\big)f(x,\theta_0)d\mu(x)\\&& -\alpha \int\log\big(\varphi(x,r_0)\big)g(x)d\mu(x)\quad
\text{(a.s.)},\\-\frac{1}{m}\sum_{i\in\s{A}_n}\log \big(f(X_i,{\theta}^*)\big)&\to&(1-\alpha)s(\theta^*,\theta_0)+\alpha s_g(\theta^*)\quad
\text{(a.s.)},\end{eqnarray*}which implies
\begin{equation}\label{lmif}\liminf_{n\to\infty}\left[-\frac{1}{m}\sum_{i\in\s{A}_n}\log
\big(\varphi(X_i,r_0)\big)+\frac{1}{m}\sum_{i\in\s{A}_n}\log \big(f(X_i,{\theta}^*)\big)\right]>0\quad \text{(a.s.)}.\end{equation}Note that
\begin{eqnarray*}&&\inf_{\theta\in\Theta\setminus B(\theta^*,r_0)}\left[-\frac{1}{m}\sum_{i\in\s{A}_n}\log
\big(f(X_i,{\theta})\big)\right]\geqslant-\frac{1}{m}\sum_{i\in\s{A}_n}\log \left(\sup_{\theta\in\Theta\setminus B(\theta^*,r_0)}f(X_i,{\theta})\right)
\\&&=-\frac{1}{m}\sum_{i\in\s{A}_n}\log
\big(\varphi(X_i,r_0)\big).\end{eqnarray*}By \eqref{lmif}, for sufficiently large $n$, $$\inf_{\theta\in\Theta\setminus
B(\theta^*,r_0)}\left[-\frac{1}{m}\sum_{i\in\s{A}_n}\log \big(f(X_i,{\theta})\big)\right]>-\frac{1}{m}\sum_{i\in\s{A}_n}\log \big(f(X_i,{\theta}^*)\big)\quad
\text{(a.s.)}.$$Hence, by the law of large numbers in Banach spaces,
\begin{eqnarray}&&\frac{\psi_n(\s{A}_n)}{m}=\inf_{\theta\in\Theta}\left[-\frac{1}{m}\sum_{i\in\s{A}_n}\log
\big(f(X_i,{\theta})\big)\right]=\min_{\theta\in\s{C}(\Theta)\cap B(\theta^*,r_0)}\left[-\frac{1}{m}\sum_{i\in\s{A}_n}\log
\big(f(X_i,{\theta})\big)\right]\nonumber\\&&\to(1-\alpha)s(\theta^*,\theta_0)+\alpha s_g(\theta^*)\quad\text{(a.s.)}.\label{dcc2}\end{eqnarray} Combining \eqref{dcc}
and \eqref{dcc2}, by Assumption \ref{as:ss}, we complete the proof.
\end{proof}

If $\hat{\theta}_{\s{A}}$ in \eqref{thetah} exists for all $\s{A}\in\mathfrak{D}_n$, we can show that $\hat{\theta}_{\s{A}_n}\to\theta_0$ and $\psi_n(\s{A}_n)/m\to
s(\theta_0,\theta_0)$ (a.s.) for $\{\s{A}_n\}\in\mathfrak{A}$ under regularity conditions. From the above proof, Assumption \ref{as:ss} is actually a necessary condition
for the separation property of $\{\psi_n\}$. This assumption is generally strong. Consider a simple case of $\epsilon=\tau=1/2$, where Assumption \ref{as:ss} reduces to
\begin{equation}\label{12}s_g(\theta)>s(\theta_0,\theta_0)\quad\text{for all}\ \theta\in\Theta.\end{equation}
For $f(x,\theta)=(2\pi)^{-1/2}\exp\big(-(x-\theta)^2/2\big)$ with $\theta_0=0$, $s(\theta_0,\theta_0)=\log(2\pi)/2+1/2$, and
$s_g(\theta)=\log(2\pi)/2+1/2\int(x-\theta)^2g(x)dx$. Therefore, \eqref{12} holds if and only if
$$\int(x-\theta)^2g(x)dx>1\quad\text{for all}\ \theta\in\mathbb{R},$$which is equivalent to $\mathrm{Var}(Z)>1$, where $Z\sim g$. If $g$ is the p.d.f. of
$N(\mu,\sigma^2)$ with $\sigma\leqslant1$, then $\{\psi_n\}$ in \eqref{smle} cannot separate $\mathfrak{A}$ from $\mathfrak{B}$ no matter how far away $\mu$ is from
$\theta_0$. This example indicates that the selection by maximum likelihood may perform poorly when there are clustered outliers. In the next subsection we will provide
another subsample selection method that still works well for this case.

\subsection{Selection by minimum distance}\label{subsec:smd}
\hskip\parindent \vspace{-0.6cm}

An important class of robust estimators is the minimum distance estimator (Wolfowitz 1957), which is derived by minimizing a certain ``distance" between the observations
and the assumed population. This estimator usually possesses good robust properties, and has been discussed actively in the literature; see Donoho and Liu (1994),
Lindsay (1994), and Wu, Karunamuni, and Zhang (2012), among others. Here we combine it with best subsample selection to provide new robust methods. Let $d_{\mathrm K}$
denote the Kolmogorov distance between two c.d.f.'s, i.e., for $F,\ G\in\s{F}^p$, $$d_{\mathrm K}(F,G)=\sup_{x\in{\mathbb{R}}^p}|F(x)-G(x)|.$$ Take the objective
function in \eqref{bssp} as
\begin{equation}\label{robfn}\psi_n(\s{A})=\inf_{\theta\in\Theta}d_{\mathrm K}(\hat{H}_{\s{A}},F_\theta),\end{equation} where $\hat{H}_{\s{A}}$ is the empirical
distribution function based on the observations $\{X_i\}_{i\in\s{A}}$. We discuss BSP for this problem under model \eqref{cm2} through verifying the separation property
of $\{\psi_n\}$ in \eqref{robfn}.

\begin{assumption}\label{as:mine} For all $\alpha\in(0,\epsilon/(1-\tau)]$,
$\inf_{\theta\in\Theta}d_{\mathrm K}\big((1-\alpha)F_{\theta_0}+\alpha G,F_\theta\big)>0$.\end{assumption}

\begin{theorem} \label{th:bsk} Under Assumptions \ref{as:m} and \ref{as:mine}, $\{\psi_n\}$ in \eqref{robfn} separates $\mathfrak{A}$ from
$\mathfrak{B}$ in \eqref{robab}. \end{theorem}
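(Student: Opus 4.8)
The plan is to verify the weak separation property of Definition \ref{def:bds2}: for every good sequence $\{\s{A}_n\}\in\mathfrak{A}$ and every bad sequence $\{\s{B}_n\}\in\mathfrak{B}$ (as defined through $\mathfrak{S}_0$ and $\mathfrak{S}_1$ in \eqref{rob0}), I would show
$$\limsup_{n\to\infty}\big[\psi_n(\s{A}_n)-\psi_n(\s{B}_n)\big]<0\quad\text{(a.s.)}.$$
The whole argument rests on controlling the two objective values separately: I expect $\psi_n(\s{A}_n)\to0$ and $\psi_n(\s{B}_n)\to c$ for some constant $c>0$ (a.s.), after which the conclusion is immediate on the intersection of the two almost-sure events. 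Two elementary facts drive everything: the multivariate Glivenko--Cantelli theorem, which controls empirical c.d.f.'s in the Kolmogorov distance, and the Lipschitz-type bound $\big|\inf_{\theta}d_{\mathrm K}(P,F_\theta)-\inf_{\theta}d_{\mathrm K}(Q,F_\theta)\big|\leqslant d_{\mathrm K}(P,Q)$, which follows from the triangle inequality for the metric $d_{\mathrm K}$ and transfers uniform convergence of the empirical measure into convergence of the infimum over $\theta$.

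For the good sequence, since $\{\s{A}_n\}\in\mathfrak{A}$ the indices lie in $\s{A}_{0n}$, so the observations $\{X_i\}_{i\in\s{A}_n}$ are i.i.d.\ from $F_{\theta_0}$; Assumption \ref{as:m} gives $m=m_n\to\infty$, so Glivenko--Cantelli yields $d_{\mathrm K}(\hat{H}_{\s{A}_n},F_{\theta_0})\to0$ (a.s.). Because $\psi_n(\s{A}_n)=\inf_{\theta}d_{\mathrm K}(\hat{H}_{\s{A}_n},F_\theta)\leqslant d_{\mathrm K}(\hat{H}_{\s{A}_n},F_{\theta_0})$ and $\psi_n\geqslant0$, I obtain $\psi_n(\s{A}_n)\to0$ (a.s.).

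For the bad sequence, I would split $\s{B}_n$ into its good part (indices in $\s{A}_{0n}$) and its bad part (the rest), whose relative sizes converge to $1-\alpha$ and $\alpha$ by the definition of $\mathfrak{S}_1$. Each part is a growing i.i.d.\ sample from $F_{\theta_0}$ and from $G$ respectively, so applying Glivenko--Cantelli to each and combining with the convergence of the proportions gives $\hat{H}_{\s{B}_n}\to H:=(1-\alpha)F_{\theta_0}+\alpha G$ uniformly (a.s.). The Lipschitz bound then yields $\psi_n(\s{B}_n)=\inf_{\theta}d_{\mathrm K}(\hat{H}_{\s{B}_n},F_\theta)\to\inf_{\theta}d_{\mathrm K}(H,F_\theta)$ (a.s.). Finally, the number of selected outliers cannot exceed the total number of outliers among the first $n$ observations, so $\alpha\leqslant\epsilon/\tau\leqslant\epsilon/(1-\tau)$ since $\tau\geqslant1/2$ in Assumption \ref{as:m}; hence $\alpha$ lies in the range of Assumption \ref{as:mine}, which guarantees $c=\inf_{\theta}d_{\mathrm K}(H,F_\theta)>0$. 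This completes the comparison.

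I expect the main obstacle to be the uniform almost-sure convergence $\hat{H}_{\s{B}_n}\to H$ in the Kolmogorov distance. The subtlety is that, although each $\s{B}_n$ mixes observations drawn from $F_{\theta_0}$ and $G$, the index structure depends on $n$, so I must argue that the good and bad sub-samples form genuine growing i.i.d.\ sequences (as permitted by the segment assumption discussed in Remark \ref{rk:se}) to which Glivenko--Cantelli applies, and then absorb the vanishing error arising because the empirical proportions only approach $1-\alpha$ and $\alpha$ rather than equal them. Once this uniform convergence is secured, everything downstream---the metric inequality and the invocation of Assumption \ref{as:mine}---is routine.
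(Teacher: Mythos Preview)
Your proposal is correct and follows essentially the same route as the paper: Glivenko--Cantelli gives $\psi_n(\s{A}_n)\to0$ for good sequences, and the triangle inequality for $d_{\mathrm K}$ combined with Glivenko--Cantelli for the mixture yields $\liminf_n\psi_n(\s{B}_n)\geqslant\inf_\theta d_{\mathrm K}\big((1-\alpha)F_{\theta_0}+\alpha G,F_\theta\big)>0$ via Assumption~\ref{as:mine}. The paper is content with the one-sided bound on the bad sequence rather than your full convergence $\psi_n(\s{B}_n)\to c$, and it does not spell out the verification that $\alpha\leqslant\epsilon/(1-\tau)$, but these are minor differences in presentation rather than in approach.
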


\begin{proof} For $\{\s{A}_n\}\in\mathfrak{A}$, by the Glivenko-Cantelli theorem, \begin{equation}\label{fa1}\psi_n(\s{A}_n)
=\inf_{\theta\in\Theta}d_{\mathrm K}(\hat{H}_{\s{A}_n},F_\theta) \leqslant\inf_{\theta\in\Theta}d_{\mathrm K}(F_{\theta_0},F_\theta)+d_{\mathrm
K}(\hat{H}_{\s{A}_n},F_{\theta_0}) =d_{\mathrm K}(\hat{H}_{\s{A}_n},F_{\theta_0})\to0\ \text{(a.s.)}.
\end{equation}For $\{\s{A}_n\}\in\mathfrak{B}$, we have$$\psi_n(\s{A}_n)=\inf_{\theta\in\Theta}d_{\mathrm K}(\hat{H}_{\s{A}_n},F_\theta)
\geqslant\inf_{\theta\in\Theta}d_{\mathrm K}\big((1-\alpha)F_{\theta_0}+\alpha G,F_\theta\big)-d_{\mathrm K}\big(\hat{H}_{\s{A}_{n}},(1-\alpha)F_{\theta_0}+\alpha
G\big),$$ which implies\begin{equation}\label{fa2}\liminf_{n\to\infty}\psi_n(\s{A}_n) \geqslant\inf_{\theta\in\Theta}d_{\mathrm K}\big((1-\alpha)F_{\theta_0}+\alpha
G,F_\theta\big)>0\quad \text{(a.s.)}.\end{equation} Combining \eqref{fa1} and \eqref{fa2}, we complete the proof. \end{proof}

Compared to Assumption \ref{as:ss}, Assumption \ref{as:mine} is fairly weak. For example, let $F_\theta$ be the c.d.f. of $N(\theta,1)$ with $\theta_0=0$, and let
$\epsilon=\tau=1/2$. Suppose that $G$ is the c.d.f. of $N(\mu.\sigma^2)$. Assumption \ref{as:mine} holds for all $\sigma\neq1$.

As a byproduct, we next prove another interesting result that, with additional conditions, the estimator based on the best subsample selected by minimizing the objective
function $\psi_n$ in \eqref{robfn} is consistent even under the contaminated model \eqref{cm2}. This result provides further support of using this objective function. In
addition, to the best of the author's knowledge, this estimator is the first one that can converge to the true parameter even under the contaminated model, and may be of
independent interest.
\begin{assumption}\label{as:uf} For all $\theta_1,\theta_2\in\Theta$, $F_{\theta_1}=F_{\theta_2}$ implies $\theta_1=\theta_2$.\end{assumption}

\begin{assumption}\label{as:ft} For all $\phi\in\Theta$, $\lim_{\theta\to\phi}d_{\mathrm K}(F_\theta,F_{\phi})=0$.\end{assumption}

\begin{assumption}\label{as:fbp} For all $x\in\mathbb{R}^p$ and all $b\in\s{C}^*(\Theta)\setminus\Theta$, $\lim_{\theta\to b}\big(F_\theta(x)-F_\theta(-x)\big)=0$, where
$\s{C}^*(\Theta)$ is defined in Assumption \ref{as:g2}.
\end{assumption}

\begin{assumption}\label{as:id} For all $\theta\in\Theta,\ \theta\neq\theta_0$, $[(1-\epsilon)F_{\theta_0}+\epsilon G-\tau F_\theta]/(1-\tau)\notin\s{F}^p$.
\end{assumption}
Assumption \ref{as:id} is the key condition to guarantee that $\theta_0$ is estimable. Otherwise, if there exists $\theta_1\neq\theta_0$ such that
$U=[(1-\epsilon)F_{\theta_0}+\epsilon G-\tau F_{\theta_1}]/(1-\tau)\in\s{F}^p$, then \begin{equation*}\tau F_{\theta_1}+(1-\tau)U=(1-\epsilon)F_{\theta_0}+\epsilon
G,\end{equation*}which makes us unable to distinguish between $\theta_0$ and $\theta_1$. This assumption is stronger than Assumption \ref{as:mine}.

\begin{assumption}\label{as:gf} For all $\s{A}\in\mathfrak{D}_n$, $\arg\min_{\theta\in\Theta}
d_{\mathrm K}(\hat{H}_{\s{A}},F_\theta)$ exists (a.s.) for sufficiently large $n$.
\end{assumption}

\begin{lemma}\label{lemma:fn} Suppose that $F_n\in\s{F}^p$ for each $n$ with $d_{\mathrm K}(F_n,F)\to0$ as $n\to\infty$, where $F$ is a function defined on $\mathbb{R}^p$.
Then $F\in\s{F}^p$.
\end{lemma}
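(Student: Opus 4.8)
The plan is to show that $F$ satisfies the three defining properties of a multivariate cumulative distribution function, each of which is inherited from the $F_n$. The key observation is that $d_{\mathrm K}(F_n,F)\to0$ is exactly uniform convergence $\sup_{x\in\mathbb{R}^p}|F_n(x)-F(x)|\to0$, which in particular yields pointwise convergence as well; I would use the pointwise part where monotonicity is concerned and the global uniform bound where the tail and continuity behavior are concerned. Recall that $F\in\s{F}^p$ iff (i) $F$ is $p$-increasing, i.e. every rectangle increment $\Delta_{(a,b]}F\geqslant0$ for $a\leqslant b$ componentwise; (ii) $F$ is right-continuous; and (iii) $F(x)\to0$ whenever some coordinate tends to $-\infty$ and $F(x)\to1$ as all coordinates tend to $+\infty$. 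I would verify these in turn. That $F$ maps into $[0,1]$ is automatic, being a (uniform, hence pointwise) limit of $[0,1]$-valued functions.

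First I would handle the $p$-increasing property, for which pointwise convergence alone suffices: for any box $(a,b]$, the increment $\Delta_{(a,b]}F_n$ is a finite alternating sum of the values of $F_n$ at the $2^p$ vertices, and it is $\geqslant0$ since $F_n\in\s{F}^p$; passing to the pointwise limit vertex-by-vertex gives $\Delta_{(a,b]}F_n\to\Delta_{(a,b]}F$, and a limit of non-negative numbers is non-negative, so $\Delta_{(a,b]}F\geqslant0$. Next, for the normalization at infinity, uniformity becomes essential (pointwise convergence fails here, as point masses escaping to $+\infty$ converge pointwise to the constant $0$, which is not a c.d.f.). Given $\epsilon>0$, I would pick $N$ with $\sup_z|F_N(z)-F(z)|<\epsilon$; since $|F_N(x)-1|\to0$ as all $x_i\to+\infty$, the triangle inequality gives $\limsup|F(x)-1|\leqslant\epsilon$ along that limit, and letting $\epsilon\downarrow0$ yields $F(x)\to1$. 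The same $\epsilon$-transfer argument handles $F(x)\to0$ when a coordinate tends to $-\infty$.

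The most delicate step, and the one I expect to require the most care to state cleanly, is right-continuity, which I would establish by a standard $3\epsilon$ argument coupling the global uniform bound with the local right-continuity of a single approximant: fixing $x$ and $\epsilon>0$, choose $N$ with $\sup_z|F_N(z)-F(z)|<\epsilon/3$, then use right-continuity of $F_N$ at $x$ to find a neighborhood (in the relevant upper-orthant sense) on which $|F_N(y)-F_N(x)|<\epsilon/3$, and combine the three terms to get $|F(y)-F(x)|<\epsilon$. There is no genuine obstacle here—the lemma is a routine stability-under-uniform-limits statement—so the only real point is to recognize that it is the uniformity of $d_{\mathrm K}\to0$, rather than mere pointwise convergence, that preserves (ii) and (iii), while (i) survives under pointwise limits alone. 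Having confirmed (i)–(iii), I conclude $F\in\s{F}^p$.
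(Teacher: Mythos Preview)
Your proposal is correct and follows exactly the approach the paper indicates: the paper's proof consists solely of the sentence ``We can prove this lemma by verifying the definition of a c.d.f.,'' and your argument is precisely the natural execution of that verification, checking the $p$-increasing property, right-continuity, and the tail normalizations, and correctly noting where uniform (as opposed to merely pointwise) convergence is needed.
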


\begin{proof} We can prove this lemma by verifying the definition of a c.d.f. \end{proof}

Denote $\s{A}_n^*=\arg\min_{\s{A}\in\mathfrak{D}_n}\psi_n(\s{A})$. By Assumption \ref{as:gf}, $\hat{\theta}_{\s{A}_n^*}=\arg\min_{\theta\in\Theta} d_{\mathrm
K}(\hat{H}_{\s{A}_n^*},F_\theta)$ exists. We now present the consistency result of $\hat{\theta}_{\s{A}_n^*}$.

\begin{proposition}Suppose that Assumption \ref{as:m} and Assumptions \ref{as:uf}--\ref{as:gf} hold. Then \\(i) $\hat{\theta}_{\s{A}_n^*}\to\theta_0$
(a.s.); \\(ii) $d_{\mathrm K}(\hat{\theta}_{\s{A}_n^*},F_{\theta_0})\to0$ (a.s.).\end{proposition}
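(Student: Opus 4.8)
The plan is to prove consistency of the global minimizer $\s{A}_n^*$ directly (this is a byproduct that does not need the comparison theorems), working throughout on the single probability-one set on which every strong law and Glivenko--Cantelli limit invoked below holds at once. Write $H=(1-\epsilon)F_{\theta_0}+\epsilon G$ for the limiting population c.d.f. Because the good observations and the outliers each form a subsequence (Remark \ref{rk:se}), the Glivenko--Cantelli theorem gives $d_{\mathrm K}(\hat{H}_n,H)\to0$ (a.s.), where $\hat{H}_n$ is the empirical c.d.f.\ of the full sample.

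First I would show that the minimal objective value vanishes. By Assumption \ref{as:m} we have $m\leqslant|\s{A}_{0n}|$ for large $n$, so the first $m$ good indices form an admissible subsample whose observations are i.i.d.\ $F_{\theta_0}$; Glivenko--Cantelli makes the objective value of this subsample tend to $0$, and minimality of $\s{A}_n^*$ forces $0\leqslant\psi_n(\s{A}_n^*)\to0$ (a.s.). Invoking Assumption \ref{as:gf} to write $\psi_n(\s{A}_n^*)=d_{\mathrm K}(\hat{H}_{\s{A}_n^*},F_{\hat{\theta}_{\s{A}_n^*}})$, this says the optimal subsample empirical c.d.f.\ becomes asymptotically indistinguishable in Kolmogorov distance from $F_{\hat{\theta}_{\s{A}_n^*}}$.

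Next I would establish (i) by a subsequence argument in the compact set $\s{C}^*(\Theta)$. Fix any subsequence along which $\hat{\theta}_{\s{A}_n^*}\to\theta^*\in\s{C}^*(\Theta)$; it suffices to show $\theta^*=\theta_0$. If $\theta^*\in\Theta$, then continuity (Assumption \ref{as:ft}) together with the previous step yields $d_{\mathrm K}(\hat{H}_{\s{A}_n^*},F_{\theta^*})\to0$; substituting this into the splitting identity $\hat{H}_n=(m/n)\hat{H}_{\s{A}_n^*}+((n-m)/n)\hat{H}_{\bar{\s{A}}_n^*}$ (with $\bar{\s{A}}_n^*=\mathbb{Z}_n\setminus\s{A}_n^*$ and $m/n\to\tau$) and using $\hat{H}_n\to H$ shows that the leftover empirical c.d.f.\ converges in Kolmogorov distance to $U=[H-\tau F_{\theta^*}]/(1-\tau)$. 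Since each leftover c.d.f.\ lies in $\s{F}^p$, Lemma \ref{lemma:fn} forces $U\in\s{F}^p$, which by the identifiability Assumption \ref{as:id} (underpinned by the uniqueness Assumption \ref{as:uf}) is possible only when $\theta^*=\theta_0$. If instead $\theta^*=\infty$ (possible only when $\Theta$ is unbounded), Assumption \ref{as:fbp} makes the mass of $F_{\hat{\theta}_{\s{A}_n^*}}$ on any fixed bounded set tend to $0$, hence the same for $\hat{H}_{\s{A}_n^*}$ by the previous step; but choosing a bounded set $B$ whose $H$-mass exceeds $1-\tau$ (feasible since $\tau\geqslant1/2$) and bounding the mass $(m/n)\hat{H}_{\s{A}_n^*}(B)\geqslant\hat{H}_n(B)-(n-m)/n$, whose right side tends to a strictly positive limit, shows the mass of $\hat{H}_{\s{A}_n^*}$ on $B$ stays bounded away from $0$, a contradiction. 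Hence every subsequential limit equals $\theta_0$, giving (i); and (ii) then follows from (i) via Assumption \ref{as:ft}, combined with the triangle inequality and the vanishing objective value of the first step in case the subsample version $d_{\mathrm K}(\hat{H}_{\s{A}_n^*},F_{\theta_0})\to0$ is the intended reading.

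The step I expect to be the main obstacle is converting the Kolmogorov-distance convergence of the \emph{data-dependent} subsample empirical c.d.f.\ into a clean statement about the deterministic limit $U$ to which Lemma \ref{lemma:fn} and Assumption \ref{as:id} apply, and in particular ruling out escape of mass when $\theta^*=\infty$: here one must make the box estimates careful in $\mathbb{R}^p$ (via inclusion--exclusion, with each term controlled by $d_{\mathrm K}$) and verify that the subsequence limit, the splitting identity, and the several almost-sure limits are all coordinated on one common probability-one event.
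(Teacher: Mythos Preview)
Your proposal is correct and follows essentially the same route as the paper: show $\psi_n(\s{A}_n^*)\to0$ by comparison with a subsample of good indices, combine this with the splitting identity for the full-sample empirical c.d.f.\ to control $\hat{H}_{\mathbb{Z}_n\setminus\s{A}_n^*}$, rule out escape to the boundary via Assumption \ref{as:fbp}, and finish with Lemma \ref{lemma:fn} plus Assumption \ref{as:id}. The paper phrases the boundary contradiction as ``the complementary empirical c.d.f.\ would have $\hat{H}(x_0)-\hat{H}(-x_0)>1$'' rather than your mass-on-$B$ version, but these are the same estimate viewed from opposite sides of the splitting identity.

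One small gap in your case analysis: you split into $\theta^*\in\Theta$ versus $\theta^*=\infty$, but $\s{C}^*(\Theta)\setminus\Theta$ may also contain \emph{finite} boundary points of $\Theta$ that lie outside $\Theta$. Assumption \ref{as:fbp} is stated for all $b\in\s{C}^*(\Theta)\setminus\Theta$, not just $b=\infty$, and the paper's proof treats the whole of $\s{C}^*(\Theta)\setminus\Theta$ at once. Your contradiction argument goes through verbatim for such finite $b$ as well, so this is only a matter of stating the dichotomy correctly, not a real obstacle.
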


\begin{proof} Here we assume $\epsilon>0$. The proof for $\epsilon=0$ is similar and simpler. Denote $H=(1-\epsilon)F_{\theta_0}+\epsilon G$.

Let $\{\s{A}_n\}\in\mathfrak{A}$. We have
\begin{equation}\label{dc}d_{\mathrm K}(\hat{H}_{\s{A}_n^*},F_{\hat{\theta}_{\s{A}_n^*}}) \leqslant d_{\mathrm K}(\hat{H}_{\s{A}_n},F_{\hat{\theta}_{\s{A}_n}}) \leqslant
d_{\mathrm K}(\hat{H}_{\s{A}_n},F_{\theta_0})\to0\quad\text{(a.s.)}.\end{equation}On the other hand, letting $\mathbb{Z}_n$ denote $\{1,\ldots,n\}$, we have
\begin{eqnarray}&&d_{\mathrm K}\big(\tau\hat{H}_{\s{A}_n^*}+(1-\tau)\hat{H}_{\mathbb{Z}_n\setminus\s{A}_n^*},\ H\big)\nonumber
\\&\leqslant&d_{\mathrm K}\big(m\hat{H}_{\s{A}_n^*}/n+(n-m)\hat{H}_{\mathbb{Z}_n\setminus\s{A}_n^*}/n,\ H\big)\nonumber
\\&&\ +\,d_{\mathrm K}\big(\tau\hat{H}_{\s{A}_n^*}+(1-\tau)\hat{H}_{\mathbb{Z}_n\setminus\s{A}_n^*},\ m\hat{H}_{\s{A}_n^*}/n+(n-m)
\hat{H}_{\mathbb{Z}_n\setminus\s{A}_n^*}/n\big)
\nonumber\\&\leqslant& d_{\mathrm K}(\hat{H}_{\mathbb{Z}_n},\ H)+|\tau-m/n|+|(1-\tau)-(n-m)/n|\nonumber
\\&\to&0\quad\text{(a.s.)}.\label{tdc}\end{eqnarray}By \eqref{dc} and \eqref{tdc},
\begin{equation}\label{tdk}d_{\mathrm K}\big(\hat{H}_{\mathbb{Z}_n\setminus\s{A}_n^*},
\ (H-\tau F_{\hat{\theta}_{\s{A}_n^*}})/(1-\tau)\big)\to0\quad\text{(a.s.)}.\end{equation} Let $E\subset\Omega$ be the set where \eqref{dc} and \eqref{tdk} hold. We next
consider the convergence for a certain $\omega\in E$. Let $b$ be a limit point of $\hat{\theta}_{\s{A}_n^*}(\omega)$. Here we view $\infty$ as a limit point if
$\hat{\theta}_{\s{A}_n^*}(\omega)$ is unbounded.

Consider the case of $b\in\s{C}^*(\Theta)\setminus\Theta$. Let $\hat{\theta}_{\s{A}_{k_n}^*}(\omega)\to b$, where $\{k_n\}$ is a subsequence of $\{n\}_{n=1,2,\ldots}$.
For $\epsilon>0$, there exist $\delta>0$ and $x_0\in\mathbb{R}^p$ such that
\begin{equation}\label{hex}H(x_0)-H(-x_0)>1-\tau+\delta.\end{equation} By Assumption \ref{as:fbp}, for sufficiently large $n$,
$$F_{\hat{\theta}_{\s{A}_{k_n}^*}(\omega)}(x_0)-F_{\hat{\theta}_{\s{A}_{k_n}^*}(\omega)}(-x_0)<\delta/(3\tau).$$By \eqref{dc}, for sufficiently large $n$,
\begin{equation}\label{cto0}\hat{H}_{\s{A}_{k_n}^*(\omega)}(x_0)-\hat{H}_{\s{A}_{k_n}^*(\omega)}(-x_0)<\delta/(2\tau).\end{equation}
It follows from \eqref{tdk}, \eqref{hex}, and \eqref{cto0} that for sufficiently large $n$,
\begin{eqnarray*}&&\hat{H}_{\mathbb{Z}_n\setminus\s{A}_{k_n}^*(\omega)}(x_0)-\hat{H}_{\mathbb{Z}_n\setminus\s{A}_{k_n}^*(\omega)}(-x_0)
\\&>&\big[\big(H(x_0)-H(-x_0)\big)-\tau\big(\hat{H}_{\s{A}_{k_n}^*(\omega)}(x_0)-\hat{H}_{\s{A}_{k_n}^*(\omega)}(-x_0)\big)-\delta/2\big]/(1-\tau)\\&>&1.\end{eqnarray*}
This is a contradiction. Therefore, $b\in\Theta$.

By Assumption \ref{as:ft},
$$d_{\mathrm K}\big(\hat{H}_{\mathbb{Z}_{k_n}\setminus\s{A}_{k_n}^*(\omega)},\ [(1-\epsilon)F_{\theta_0}+\epsilon G-\tau F_b]/(1-\tau)\big)\to0$$
By Lemma \ref{lemma:fn}, \begin{equation}\label{ni}[(1-\epsilon)F_{\theta_0}+\epsilon G-\tau F_b]/(1-\tau)\in\s{F}^p.\end{equation}By Assumption \ref{as:id},
$b=\theta_0$. This completes the proof of (i), and (ii) follows from (i) immediately. \end{proof}

\begin{remark}From the above proof, when Assumption \ref{as:id} does not hold, any limit point of
$\hat{\theta}_{\s{A}_n^*}(\omega)$ satisfies \eqref{ni}. For small $\epsilon$, such $b$ cannot be far way from $\theta_0$ since
$d_{\mathrm{K}}(F_{b},F_{\theta_0})\leqslant\epsilon/(1-\epsilon)$.\end{remark}

\subsection{A simulation study}\label{subsec:robsimu}
\hskip\parindent \vspace{-0.6cm}

We conduct a small simulation study to compare the two subsample selection methods by likelihood and $d_{\mathrm{K}}$. Let the good observations $X_1,\ldots,X_{n_0}$ be
i.i.d. $\sim N(\theta,1)$ with $\theta_0=0$. We generate $n_\mathrm{o}$ outliers $X_{n_0+1},\ldots,X_{n_0+n_{\mathrm{o}}}$ as
\\(I): $X_{n_0+1}=\ldots=X_{n_0+n_{\mathrm{o}}}=1$;
\\(II): $X_{n_0+1},\ldots,X_{n_0+n_{\mathrm{o}}}$ i.i.d. $\sim N(1,0.5^2)$;
\\(III): $X_{n_0+1},\ldots,X_{n_0+n_{\mathrm{o}}}$ i.i.d. $\sim N(1,3)$.
\\We search the solutions to \eqref{bssp} with the objective functions \eqref{smle} and \eqref{robfn} through randomly generating $B$ subsets of size $m$, where $B$ is
varies from $10$ to $100$. As $B$ increases, the objective value becomes smaller and corresponds to a ``better" selector. In this simulation, we fix
$n=n_0+n_\mathrm{o}=20$, $m=10$, and consider two values of $n_\mathrm{o}$, 5 and 10. We Repeat 10,000 times to compute the mean objective values (MOVs) and MSEs. The
results are shown in Table \ref{tab:db}.

\begin{table}
\caption{\label{tab:db}Comparisons of different $B$'s in Section \ref{subsec:robsimu}} \centering\vspace{2mm}
{\begin{tabular}{*{9}{lclcccccc}}\hline&&\quad\quad&\quad\quad&\multicolumn{2}{c}{$n_o=5$}&\quad\quad&\multicolumn{2}{c}{$n_o=10$}
\\\cline{5-9}&&&&MOV&MSE&\quad\quad&MOV&MSE\\\hline
\multirow{4}{*}{(I)}
&&likelihood ($B=10$)      &&0.5078&0.2543&&0.3244&0.5712\\
&&likelihood ($B=100$)     &&0.3122&0.3904&&0.1347&0.7862\\
&&$d_{\mathrm K}$ ($B=10$) &&0.1437&0.1037&&0.2036&0.1694\\
&&$d_{\mathrm K}$ ($B=100$)&&0.1202&0.1012&&0.1677&0.1278\\\hline \multirow{4}{*}{(II)}
&&likelihood ($B=10$)      &&0.5484&0.2313&&0.4461&0.5021\\
&&likelihood ($B=100$)     &&0.3424&0.3052&&0.2542&0.6503\\
&&$d_{\mathrm K}$ ($B=10$) &&0.1267&0.1442&&0.1349&0.2660\\
&&$d_{\mathrm K}$ ($B=100$)&&0.1068&0.1438&&0.1134&0.2510\\\hline \multirow{4}{*}{(III)}
&&likelihood ($B=10$)      &&1.1135&0.1864&&2.1988&0.4447\\
&&likelihood ($B=100$)     &&0.5666&0.1738&&1.1227&0.3152\\
&&$d_{\mathrm K}$ ($B=10$) &&0.1319&0.2504&&0.1565&0.4366\\
&&$d_{\mathrm K}$ ($B=100$)&&0.1091&0.2429&&0.1232&0.3780\\\hline
\end{tabular}}
\end{table}

We state at the end of Section \ref{subsec:smle} that Assumption \ref{as:ss} does not hold when there are clustered outliers like (I) or (II), which can make BSP for
\eqref{smle} fail. The simulation results are consistent to this conclusion: the likelihood-based subsample estimator performs more poorly as $B$ increases. For this
case, the behavior of the $d_\mathrm{K}$-based subsample estimator follows BSP well: smaller MOV, smaller MSE. When the outliers are from (III), the two estimators both
follow BSP well, and the likelihood method is better.

\newpage
\section{Better subsample selection in regression}\label{sec:boer}
\hskip\parindent \vspace{-0.8cm}

This section discusses BSP for the best subsample problem in regression models. We show that the least trimmed squares (LTS) estimate (Rousseeuw 1984) is actually an
estimate based on the best subsample selected by the least squares, and prove the separation property of the corresponding objective function.

\subsection{Separation property of trimmed least squares}\label{subsec:robr}
\hskip\parindent \vspace{-0.6cm}

Consider a linear regression model
\begin{equation}\label{lm}\v{y}=\m{X}\vg{\beta}+\vg{\varepsilon},\end{equation}
where $\m{X}=(x_{ij})$ is the $n\times p$ regression matrix, $\v{y}=(y_1,\ldots,y_n)'\in{\mathbb{R}}^n$ is the response vector, $\vg{\beta}=(\beta_1,\ldots,\beta_p)'$ is
the vector of regression coefficients and $\vg{\varepsilon}=(\varepsilon_1,\ldots,\varepsilon_n)'$ is a vector of i.i.d. random errors with zero mean and finite variance
$\sigma^2$. The LTS estimate is a commonly used regression estimate with high breakdown value, and we describe it as follows. For any $\vg{\beta}$ in (\ref{lm}), denote
the corresponding residuals by $r_i(\vg{\beta})=y_i-\v{x}_i'\vg{\beta}$ for $i=1,\ldots,n$, where $\v{x}_i=(x_{i1},\ldots,x_{ip})'$. For a specified integer $m\leqslant
n$, the LTS estimator $\hat{\vg{\beta}}_{\mathrm{LTS}}$ is the solution to
\begin{equation}\min_{\scriptsize\vg{\beta}}\sum_{i=1}^mr_{\pi_i(\vg{\beta})}^2(\vg{\beta}),\label{lts}\end{equation}where
$r_{\pi_1(\vg{\beta})}^2(\vg{\beta})\leqslant\cdots\leqslant r_{\pi_n(\vg{\beta})}^2(\vg{\beta})$ are the ordered squared residuals. Denote
$J(\vg{\beta})=\{\pi_1(\vg{\beta}),\ldots,\pi_n(\vg{\beta})\}$.

For all $n$, let the decision space $\mathfrak{D}_n$ be the same as \eqref{d} in the previous section. Take the objective function as
\begin{equation}\label{robrfn}\psi_n(\s{A})=\|\v{y}_{\s{A}}-\m{X}_{[\s{A}]}\hat{\vg{\beta}}_{[\s{A}]}\|^2,\end{equation}where $\v{y}_{\s{A}}$ is the subvector of
$\v{y}$ corresponding to the subsample $\s{A}$, $\m{X}_{[\s{A}]}$ is the submatrix of $\m{X}$ corresponding to $\s{A}$, i.e., $\m{X}_{[\s{A}]}$ is obtained by removing
all the rows whose subscripts are not in $\s{A}$, and $\hat{\vg{\beta}}_{[\s{A}]}$ is the least squares estimator under $\s{A}$, i.e.,
$\hat{\vg{\beta}}_{[\s{A}]}=(\m{X}_{[\s{A}]}'\m{X}_{[\s{A}]})^{-1}\m{X}_{[\s{A}]}'\v{y}_{\s{A}}$.

We first show that the LTS estimator defined in \eqref{lts} corresponds to the solution that minimizes $\psi_n$ in \eqref{robrfn}. Denote
$$\s{A}_n^*=\arg\min_{\s{A}\in\mathfrak{D}_n}\psi_n(\s{A}).$$

\begin{proposition}The LTS estimator $\hat{\vg{\beta}}_{\mathrm{LTS}}$ satisfies $\hat{\vg{\beta}}_{\mathrm{LTS}}=\hat{\vg{\beta}}_{[\s{A}_n^*]}$ and
$J(\hat{\vg{\beta}}_{\mathrm{LTS}})=\s{A}_n^*$.\end{proposition}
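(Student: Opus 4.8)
The plan is to exploit the elementary fact that, for a fixed coefficient vector $\vg{\beta}$, the sum of the $m$ smallest squared residuals is exactly the minimum of $\sum_{i\in\s{A}}r_i^2(\vg{\beta})$ over all index sets $\s{A}$ of size $m$, and then to interchange the two minimizations (over $\vg{\beta}$ and over $\s{A}$). Concretely, I would first record the identity
$$\sum_{i=1}^m r_{\pi_i(\vg{\beta})}^2(\vg{\beta})=\min_{\s{A}\in\mathfrak{D}_n}\sum_{i\in\s{A}}r_i^2(\vg{\beta}),$$
which holds because selecting the indices of the $m$ smallest squared residuals is precisely the choice that minimizes the partial sum over size-$m$ subsets. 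Substituting this into \eqref{lts} recasts the LTS problem as a joint minimization over the pair $(\vg{\beta},\s{A})\in\mathbb{R}^p\times\mathfrak{D}_n$, and since the feasible region is a product set, the two minima commute.

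Next, I would evaluate the inner problem for a fixed $\s{A}$. By the defining property of ordinary least squares, $\hat{\vg{\beta}}_{[\s{A}]}$ minimizes $\sum_{i\in\s{A}}r_i^2(\vg{\beta})=\|\v{y}_{\s{A}}-\m{X}_{[\s{A}]}\vg{\beta}\|^2$ over $\vg{\beta}$, with minimal value exactly $\psi_n(\s{A})$ as in \eqref{robrfn}. Consequently the common value of the joint minimum equals $\min_{\s{A}\in\mathfrak{D}_n}\psi_n(\s{A})=\psi_n(\s{A}_n^*)$; that is, the optimal LTS objective coincides with the best-subsample objective value.

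With the two optimal values identified, I would close the argument by comparing optimizers. Writing $J=J(\hat{\vg{\beta}}_{\mathrm{LTS}})$ for the size-$m$ index set of the smallest squared residuals at the LTS solution, the chain
$$\psi_n(J)=\sum_{i\in J}r_i^2(\hat{\vg{\beta}}_{[J]})\leqslant\sum_{i\in J}r_i^2(\hat{\vg{\beta}}_{\mathrm{LTS}})=\sum_{i=1}^m r_{\pi_i(\hat{\vg{\beta}}_{\mathrm{LTS}})}^2(\hat{\vg{\beta}}_{\mathrm{LTS}})=\psi_n(\s{A}_n^*)\leqslant\psi_n(J)$$
forces equality throughout. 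Hence $J$ is itself a minimizer of $\psi_n$ over $\mathfrak{D}_n$, giving $J(\hat{\vg{\beta}}_{\mathrm{LTS}})=\s{A}_n^*$, and the middle equality $\sum_{i\in J}r_i^2(\hat{\vg{\beta}}_{[J]})=\sum_{i\in J}r_i^2(\hat{\vg{\beta}}_{\mathrm{LTS}})$ shows that $\hat{\vg{\beta}}_{\mathrm{LTS}}$ already attains the least-squares minimum over the subsample $J$, so $\hat{\vg{\beta}}_{\mathrm{LTS}}=\hat{\vg{\beta}}_{[J]}=\hat{\vg{\beta}}_{[\s{A}_n^*]}$.

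The main obstacle is not the algebra but the bookkeeping of uniqueness. The clean identification $J=\s{A}_n^*$ presupposes that the minimizer of $\psi_n$ over $\mathfrak{D}_n$ is unique and that there are no ties among the squared residuals at the optimum, while the step $\hat{\vg{\beta}}_{\mathrm{LTS}}=\hat{\vg{\beta}}_{[J]}$ uses uniqueness of the restricted least-squares solution, i.e. that $\m{X}_{[J]}$ has full column rank. I would therefore either invoke the usual genericity of the design (full-rank submatrices and almost surely distinct residuals) or phrase the conclusion up to tie-breaking, in which case the stated equalities hold for some optimal $\s{A}_n^*$ and some LTS solution rather than for literally every choice.
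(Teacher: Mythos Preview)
Your argument is correct and is essentially the same sandwich argument the paper uses: the paper writes a single chain
\[
\sum_{i=1}^m r_{\pi_i(\hat{\vg{\beta}}_{\mathrm{LTS}})}^2(\hat{\vg{\beta}}_{\mathrm{LTS}})
\leqslant\sum_{i=1}^m r_{\pi_i(\hat{\vg{\beta}}_{[\s{A}_n^*]})}^2(\hat{\vg{\beta}}_{[\s{A}_n^*]})
\leqslant\psi_n(\s{A}_n^*)
\leqslant\psi_n(J)
\leqslant\sum_{i\in J}r_i^2(\hat{\vg{\beta}}_{\mathrm{LTS}})
=\sum_{i=1}^m r_{\pi_i(\hat{\vg{\beta}}_{\mathrm{LTS}})}^2(\hat{\vg{\beta}}_{\mathrm{LTS}}),
\]
whereas you first isolate the equality of optimal values via the commuting-minima observation and then run a shorter chain to pin down the optimizers; the content of the inequalities is identical. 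Your closing caveat about uniqueness (ties among squared residuals, full column rank of $\m{X}_{[J]}$, uniqueness of $\s{A}_n^*$) is a point the paper silently assumes, so your version is in fact slightly more careful.
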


\begin{proof} We have \begin{eqnarray*}&&\sum_{i=1}^mr_{\pi_i(\hat{\vg{\beta}}_{\mathrm{LTS}})}^2(\hat{\vg{\beta}}_{\mathrm{LTS}})
\leqslant\sum_{i=1}^mr_{\pi_i(\hat{\vg{\beta}}_{[\s{A}_n^*]})}^2(\hat{\vg{\beta}}_{[\s{A}_n^*]})
\leqslant\|\v{y}_{\s{A}_n^*}-\m{X}_{[\s{A}_n^*]}\hat{\vg{\beta}}_{[\s{A}_n^*]}\|^2
\\&&\leqslant\|\v{y}_{J(\hat{\vg{\beta}}_{\mathrm{LTS}})}-\m{X}_{[J(\hat{\vg{\beta}}_{\mathrm{LTS}})]}\hat{\vg{\beta}}_{[J(\hat{\vg{\beta}}_{\mathrm{LTS}})]}\|^2
\leqslant\|\v{y}_{J(\hat{\vg{\beta}}_{\mathrm{LTS}})}-\m{X}_{[J(\hat{\vg{\beta}}_{\mathrm{LTS}})]}\hat{\vg{\beta}}_{\mathrm{LTS}}\|^2
=\sum_{i=1}^mr_{\pi_i(\hat{\vg{\beta}}_{\mathrm{LTS}})}^2(\hat{\vg{\beta}}_{\mathrm{LTS}}),
\end{eqnarray*}which completes the proof.\end{proof}

We next discuss BSP for the optimization problem associated with LTS. Let $\s{A}_{0n}$ be the same as in \eqref{a0}. The contaminated regression model is assumed to be
\begin{equation}\label{cmlr}y_i=\vg{\beta}'\v{x}_i+\varepsilon_i\ \text{for}\ i\in\s{A}_{0n}\ \text{and}\ y_i=R(\v{x}_i)+\varepsilon_i\ \text{for}\
i\notin\s{A}_{0n},\end{equation} where $R$ is a function defined on $\mathbb{R}^p$, and $\varepsilon_1,\ldots,\varepsilon_n$ are the same as in model \eqref{lm}.

Some notation and assumptions are needed to prove the separation property of $\{\psi_n\}$ under model \eqref{cmlr}. Define $\mathfrak{S}_0$ and $\mathfrak{S}_1$ as in
\eqref{rob0}, and $\mathfrak{A}$ and $\mathfrak{B}$ as in \eqref{robab}. For all $\s{A}\in \mathfrak{D}_n$, let
$\m{H}_{[\s{A}]}=\m{I}_m-\m{X}_{[\s{A}]}(\m{X}_{[\s{A}]}'\m{X}_{[\s{A}]})^{-1}\m{X}_{[\s{A}]}'$ denote the projection matrix on the subspace $\{\v{x}\in\mathbb{R}^m:\
\m{X}_{[\s{A}]}'\v{x}=\v{0}\}$, where $\m{I}_m$ is the $m\times m$ identity matrix. In this section we let $\vg{\beta}$ itself denote the true parameter in model
\eqref{cmlr}, and assume that $p$ and $\vg{\beta}$ are fixed.

\begin{assumption} For all $\{k_n\}\in\mathfrak{S}_0\cup\mathfrak{S}_1$, $\m{X}_{\{k_n\}}'\m{X}_{\{k_n\}}/n\to$ a positive definite matrix as $n\to\infty$, where
$\m{X}_{\{k_n\}}=(\v{x}_{k_1}\ \cdots\ \v{x}_{k_n})'$. \label{as:sxx}
\end{assumption}

\begin{assumption} For all $\{k_n\}\in\mathfrak{S}_0\cup\mathfrak{S}_1$, $\v{v}'\m{H}_{\{k_n\}}\v{v}/n$ has a positive and finite limit as $n\to\infty$,
where $\v{v}=(v_1,\ldots,v_n)'$ with $v_i=0$ for $k_i\in\s{A}_{0n}$ and $v_i=R(\v{x}_{k_i})-\vg{\beta}'\v{x}_{k_i}$ otherwise,
$\m{H}_{\{k_n\}}=\m{I}_n-\m{X}_{\{k_n\}}(\m{X}_{\{k_n\}}'\m{X}_{\{k_n\}})^{-1}\m{X}_{\{k_n\}}'$, and $\m{X}_{\{k_n\}}$ is the same as in Assumption \ref{as:sxx}.
\label{as:sxx2}
\end{assumption}

\begin{remark}
If $\v{x}_1,\ldots,\v{x}_n$ are i.i.d. generated from a $p$-dimensional distribution with a positive definite covariance matrix, then Assumptions \ref{as:sxx} and
\ref{as:sxx2} hold (a.s.) providing $E(R(\v{x}_1)-\vg{\beta}'\v{x}_1)^2$ is positive and finite.\end{remark}

\begin{lemma}\label{lemma:lsc} Let $\{a_{nk}:\ k=1,\ldots,n,\ n=1,2,\ldots\}$ be an array of numbers satisfying $\sum_{k=1}^na_{nk}^2\leqslant1$. Then
$\sum_{k=1}^na_{nk}\varepsilon_i/\sqrt{n}\to0$ (a.s.) as $n\to\infty$.\end{lemma}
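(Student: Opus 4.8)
The plan is to reduce the problem to sums of i.i.d.\ \emph{bounded} summands by truncating the errors at a \emph{fixed} level and then letting that level grow, which avoids needing any moment beyond the variance. Throughout note that $\sum_{k=1}^n a_{nk}^2\leqslant1$ forces $|a_{nk}|\leqslant1$ and $\sum_{k=1}^n|a_{nk}|\leqslant\sqrt n$. Fix $c>0$ and write $\varepsilon_k=\xi_k+m_c+\varepsilon_k I(|\varepsilon_k|>c)$, where $m_c=E[\varepsilon_1 I(|\varepsilon_1|\leqslant c)]$ and $\xi_k=\varepsilon_k I(|\varepsilon_k|\leqslant c)-m_c$. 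The $\xi_k$ are i.i.d., mean zero, and bounded by $2c$; since $E\varepsilon_1=0$ we have $|m_c|\leqslant E[|\varepsilon_1|I(|\varepsilon_1|>c)]\to0$ and $\rho(c):=E[\varepsilon_1^2 I(|\varepsilon_1|>c)]\to0$ as $c\to\infty$. Writing $S_n=\sum_{k=1}^n a_{nk}\varepsilon_k/\sqrt n$, this splits $S_n$ into three pieces $S_n^{(1)}(c)+S_n^{(2)}(c)+S_n^{(3)}(c)$ coming from $\xi_k$, $m_c$, and the tail, respectively.

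For the bounded centered piece $S_n^{(1)}(c)=\sum_k a_{nk}\xi_k/\sqrt n$ I would bound the fourth moment. By independence and mean zero only fully paired terms survive, giving $E[(\sum_k a_{nk}\xi_k)^4]=E[\xi_1^4]\sum_k a_{nk}^4+3(E\xi_1^2)^2\sum_{k\neq l}a_{nk}^2 a_{nl}^2$. Using $\sum_k a_{nk}^4\leqslant(\sum_k a_{nk}^2)^2\leqslant1$ and $E\xi_1^4\leqslant 4c^2\sigma^2$, this is at most a constant $C(c)$ depending only on $c$ and $\sigma^2$, so $E[(S_n^{(1)}(c))^4]\leqslant C(c)/n^2$. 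As $\sum_n n^{-2}<\infty$, Markov's inequality and the first Borel--Cantelli lemma give $S_n^{(1)}(c)\to0$ (a.s.) for each fixed $c$.

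The mean piece is deterministic: by Cauchy--Schwarz $|S_n^{(2)}(c)|=|m_c|\,|\sum_k a_{nk}|/\sqrt n\leqslant|m_c|\sqrt{\sum_k a_{nk}^2}\leqslant|m_c|$. For the tail piece, Cauchy--Schwarz again gives $|S_n^{(3)}(c)|\leqslant(\frac1n\sum_{k=1}^n\varepsilon_k^2 I(|\varepsilon_k|>c))^{1/2}$, and the strong law of large numbers applied to the i.i.d.\ variables $\varepsilon_k^2 I(|\varepsilon_k|>c)$ shows the right-hand side tends to $\sqrt{\rho(c)}$ (a.s.). Hence, for each fixed $c$, $\limsup_n|S_n|\leqslant|m_c|+\sqrt{\rho(c)}$ on a set of probability one.

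Finally I would intersect these probability-one events over $c\in\mathbb{N}$ and let $c\to\infty$, using $|m_c|\to0$ and $\rho(c)\to0$ to conclude $\limsup_n|S_n|=0$ (a.s.). The main obstacle is that the array $a_{nk}$ changes entirely with $n$, so there is no partial-sum or martingale structure and no subsequence-plus-gap argument applies; the resolution is the two-stage limit with a \emph{fixed} truncation level, which makes the bounded part carry a uniformly finite fourth moment (hence a summable $n^{-2}$ tail bound), while the tail part is controlled not by Borel--Cantelli but by the almost sure convergence of the empirical second moment. An index-dependent truncation such as $|\varepsilon_k|\leqslant\sqrt k$ would fail, since with only a finite variance it is the minimal level killing the tail by Borel--Cantelli, yet it yields a fourth-moment bound of order only $n^{-1}$, which is not summable.
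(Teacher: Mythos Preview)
Your argument is correct. The truncation-at-a-fixed-level decomposition, the fourth-moment bound on the bounded piece giving a summable $n^{-2}$ tail, the Cauchy--Schwarz control of the constant and tail pieces, and the final passage $c\to\infty$ along a countable sequence all work as stated. One tiny quibble: $|\xi_1|\leqslant c+|m_c|\leqslant 2c$ is what you need for the bound $E\xi_1^4\leqslant 4c^2E\xi_1^2$, and you might state this explicitly rather than just asserting the $4c^2\sigma^2$ bound.

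The paper does not give its own proof; it simply cites Chow (1966), \emph{Some convergence theorems for independent random variables}, Ann.\ Math.\ Statist.\ \textbf{37}, 1482--1493. Chow's results are stated for triangular arrays of independent variables under Lindeberg-type conditions and cover more general normalizations, so the paper is invoking a ready-made theorem. Your route is genuinely different in spirit: it is elementary and self-contained, needing only the fourth-moment/Borel--Cantelli trick for bounded summands together with the strong law for the truncated second moments, and it makes transparent exactly where the hypotheses $E\varepsilon_1=0$ and $E\varepsilon_1^2<\infty$ enter (the former to kill $m_c$, the latter to kill $\rho(c)$). What the citation buys is brevity and a pointer to sharper, more general statements; what your argument buys is that a reader can verify the lemma on the spot without tracking down a 1966 paper.
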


\begin{proof} See Chow (1966). \end{proof}

\begin{theorem} \label{th:rbsi} Under Assumptions \ref{as:sxx} and \ref{as:sxx2},
$\{\psi_n\}$ in \eqref{robrfn} separates $\mathfrak{A}$ from $\mathfrak{B}$ in \eqref{robab}. \end{theorem}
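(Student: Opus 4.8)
The plan is to invoke the scaling device of Remark~\ref{rk:scale} with $a_n=n$ and establish the strictly stronger fact that, for each fixed pair $\{\s{A}_n\}\in\mathfrak{A}$ and $\{\s{B}_n\}\in\mathfrak{B}$, the normalized gap $[\psi_n(\s{A}_n)-\psi_n(\s{B}_n)]/n$ converges almost surely to a strictly negative number. Because $\m{H}_{[\s{A}]}$ projects onto the orthogonal complement of the columns of $\m{X}_{[\s{A}]}$, it annihilates $\m{X}_{[\s{A}]}\vg{\beta}$; writing the contaminated responses from \eqref{cmlr} as $\v{y}_{\s{A}}=\m{X}_{[\s{A}]}\vg{\beta}+\v{v}_{\s{A}}+\vg{\varepsilon}_{\s{A}}$, where $\v{v}_{\s{A}}$ is the shift vector (zero on good indices, $R(\v{x}_i)-\vg{\beta}'\v{x}_i$ on outlier indices), the objective \eqref{robrfn} collapses to the single quadratic-form identity $\psi_n(\s{A})=(\v{v}_{\s{A}}+\vg{\varepsilon}_{\s{A}})'\m{H}_{[\s{A}]}(\v{v}_{\s{A}}+\vg{\varepsilon}_{\s{A}})$. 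The whole argument then reduces to evaluating three quadratic forms, and I read Assumptions~\ref{as:sxx}--\ref{as:sxx2} not at the index $n$ but at the length-$m$ prefix $\{k_1,\dots,k_m\}$ of the relevant subsequence; since $m=m_n\to\infty$ with $m/n\to\tau$, the postulated limits pass to these prefixes as a subsequence of a convergent sequence.

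For a good sequence $\{\s{A}_n\}\in\mathfrak{A}$ the shift vanishes, so $\psi_n(\s{A}_n)=\vg{\varepsilon}_{\s{A}_n}'\m{H}_{[\s{A}_n]}\vg{\varepsilon}_{\s{A}_n}=\|\vg{\varepsilon}_{\s{A}_n}\|^2-\vg{\varepsilon}_{\s{A}_n}'(\m{I}_m-\m{H}_{[\s{A}_n]})\vg{\varepsilon}_{\s{A}_n}$. The first term divided by $m$ tends to $\sigma^2$ by the strong law applied along the subsequence $\{k_n\}$. The subtracted term equals the $p$-dimensional form $(\m{X}_{[\s{A}_n]}'\vg{\varepsilon}_{\s{A}_n})'(\m{X}_{[\s{A}_n]}'\m{X}_{[\s{A}_n]})^{-1}(\m{X}_{[\s{A}_n]}'\vg{\varepsilon}_{\s{A}_n})$, which is $o(m)$: each coordinate of $\m{X}_{[\s{A}_n]}'\vg{\varepsilon}_{\s{A}_n}$ is of order $o(m)$ by Lemma~\ref{lemma:lsc} (normalizing the design column and using $\sum_i x_{ij}^2=O(m)$), while $\lambda_{\min}(\m{X}_{[\s{A}_n]}'\m{X}_{[\s{A}_n]})$ grows like $m$ by Assumption~\ref{as:sxx}. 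Hence $\psi_n(\s{A}_n)/n\to\tau\sigma^2$ almost surely.

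For a bad sequence $\{\s{B}_n\}\in\mathfrak{B}$ I expand $\psi_n(\s{B}_n)=\vg{\varepsilon}_{\s{B}_n}'\m{H}_{[\s{B}_n]}\vg{\varepsilon}_{\s{B}_n}+2\v{v}_{\s{B}_n}'\m{H}_{[\s{B}_n]}\vg{\varepsilon}_{\s{B}_n}+\v{v}_{\s{B}_n}'\m{H}_{[\s{B}_n]}\v{v}_{\s{B}_n}$. The first summand again satisfies $\vg{\varepsilon}_{\s{B}_n}'\m{H}_{[\s{B}_n]}\vg{\varepsilon}_{\s{B}_n}/m\to\sigma^2$ by the identical reasoning, and the third satisfies $\v{v}_{\s{B}_n}'\m{H}_{[\s{B}_n]}\v{v}_{\s{B}_n}/m\to c$ for some $c>0$ directly by Assumption~\ref{as:sxx2} (the positivity of $c$ is exactly what makes $\{\s{B}_n\}$ worse). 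The cross term is where Lemma~\ref{lemma:lsc} is essential: putting $\v{w}=\m{H}_{[\s{B}_n]}\v{v}_{\s{B}_n}$ and $a_{mi}=w_i/\|\v{w}\|$ so that $\sum_i a_{mi}^2=1$, Chow's result yields $\sum_i a_{mi}\varepsilon_{k_i}/\sqrt m\to0$ almost surely, whence $\v{v}_{\s{B}_n}'\m{H}_{[\s{B}_n]}\vg{\varepsilon}_{\s{B}_n}=\v{w}'\vg{\varepsilon}_{\s{B}_n}=o(\|\v{w}\|\sqrt m)=o(m)$, using $\|\v{w}\|^2=\v{v}_{\s{B}_n}'\m{H}_{[\s{B}_n]}\v{v}_{\s{B}_n}=O(m)$. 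Collecting the three pieces gives $\psi_n(\s{B}_n)/n\to\tau(\sigma^2+c)$, so $[\psi_n(\s{A}_n)-\psi_n(\s{B}_n)]/n\to-\tau c<0$ almost surely, which by Remark~\ref{rk:scale} is precisely the separation property.

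I expect the main obstacle to be controlling the cross term $\v{v}_{\s{B}_n}'\m{H}_{[\s{B}_n]}\vg{\varepsilon}_{\s{B}_n}$, the interaction between the deterministic contamination and the random errors, which is exactly why Chow's strong-law Lemma~\ref{lemma:lsc} is imported; one must verify that the normalization by $\|\v{w}\|$ is legitimate (guaranteed eventually since $c>0$ keeps $\v{w}\neq\v{0}$) and that $\|\v{w}\|^2=O(m)$ so the resulting bound is genuinely $o(m)$. A secondary, more bookkeeping difficulty is the faithful transfer of the size-$n$ limits in Assumptions~\ref{as:sxx}--\ref{as:sxx2} to the length-$m$ subsamples that actually appear in $\psi_n$; this is harmless because $m_n\to\infty$, but it must be stated carefully to keep the three quadratic-form limits well defined.
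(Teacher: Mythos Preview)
Your proposal is correct and follows essentially the same route as the paper: the same quadratic-form identity $\psi_n(\s{A})=(\v{v}_{\s{A}}+\vg{\varepsilon}_{\s{A}})'\m{H}_{[\s{A}]}(\v{v}_{\s{A}}+\vg{\varepsilon}_{\s{A}})$, the same three-term expansion for the bad sequence, and the same use of Lemma~\ref{lemma:lsc} (Chow) to kill the cross term. The only noteworthy difference is that for the good sequence the paper does not compute the exact limit; it simply invokes the least-squares minimality to get the one-line upper bound $\psi_n(\s{A}_n)/m\leqslant\|\v{y}_{\s{A}_n}-\m{X}_{[\s{A}_n]}\vg{\beta}\|^2/m=\|\vg{\varepsilon}_{\s{A}_n}\|^2/m\to\sigma^2$, which together with $\psi_n(\s{B}_n)/m\to\sigma^2+c$ already yields the required $\limsup$ inequality---so your detour through $\vg{\varepsilon}'(\m{I}_m-\m{H})\vg{\varepsilon}=o(m)$ on the good side is unnecessary (though not wrong).
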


\begin{proof} For $\{\s{A}_n\}\in\mathfrak{A}$, we have \begin{equation}\label{rfa1}\psi_n(\s{A}_n)/m
=\|\v{y}_{\s{A}_n}-\m{X}_{[\s{A}_n]}\hat{\vg{\beta}}_{[\s{A}_n]}\|^2/m \leqslant\|\v{y}_{\s{A}_n}-\m{X}_{[\s{A}_n]}{\vg{\beta}}\|^2/m\to\sigma^2\
\text{(a.s.)}.\end{equation}For $\{\s{A}_n\}\in\mathfrak{B}$, denote $\s{A}_{1n}=\s{A}_n\cap\s{A}_{0n}$ and $\s{A}_{2n}=\s{A}_n\setminus\s{A}_{0n}$. Partition
$\m{H}_{[\s{A}_n]}$ as $\m{H}_{[\s{A}_n]}=(\m{H}_{[\s{A}_n]}^{(1)}\ \m{H}_{[\s{A}_n]}^{(2)})$, where $\m{H}_{[\s{A}_n]}^{(1)}$ corresponds to $\s{A}_{1n}$. We
have\begin{eqnarray*}&&\psi_n(\s{A}_n)=\|\v{y}_{\s{A}_n}-\m{X}_{[\s{A}_n]}\hat{\vg{\beta}}_{[\s{A}_n]}\|^2
=\left\|\m{H}_{[\s{A}_n]}\left(\begin{array}{l}\m{X}_{[\s{A}_{1n}]}\vg{\beta}\\R(\m{X}_{[\s{A}_{2n}]})\end{array}\right) +\m{H}_{[\s{A}_n]}\vg{\varepsilon}_{\s{A}_{n}}
\right\|^2
\\&=&\left\|\m{H}_{[\s{A}_n]}\left[\m{X}_{[\s{A}_{n}]}\vg{\beta}+
\left(\begin{array}{l}\m{0}\\R(\m{X}_{[\s{A}_{2n}]})-\m{X}_{[\s{A}_{2n}]}\vg{\beta}\end{array}\right)\right]
+\m{H}_{[\s{A}_n]}\vg{\varepsilon}_{\s{A}_{n}}\right\|^2
\\&=&\left\|\m{H}_{[\s{A}_n]}^{(2)}\big(R(\m{X}_{[\s{A}_{2n}]})-\m{X}_{[\s{A}_{2n}]}\vg{\beta}\big)+\m{H}_{[\s{A}_n]}\vg{\varepsilon}_{\s{A}_{n}}\right\|^2
\\&=&\vg{\varepsilon}_{\s{A}_{n}}'\m{H}_{[\s{A}_n]}\vg{\varepsilon}_{\s{A}_{n}}
+\big(R(\m{X}_{[\s{A}_{2n}]})-\m{X}_{[\s{A}_{2n}]}\vg{\beta}\big)'\m{H}_{[\s{A}_n]}^{(2)'}\m{H}_{[\s{A}_n]}\vg{\varepsilon}_{\s{A}_{n}}
\\&&\quad+\,\big(R(\m{X}_{[\s{A}_{2n}]})-\m{X}_{[\s{A}_{2n}]}\vg{\beta}\big)'\m{H}_{[\s{A}_n]}^{(2)'}\m{H}_{[\s{A}_n]}^{(2)}\big(R(\m{X}_{[\s{A}_{2n}]})
-\m{X}_{[\s{A}_{2n}]}\vg{\beta}\big),
\end{eqnarray*}where $R(\m{X}_{[\s{A}_{2n}]})=\big(R(\v{x}_i)\big)'_{i\in\s{A}_{2n}}$.
By Assumption \ref{as:sxx2}.
$\big(R(\m{X}_{[\s{A}_{2n}]})-\m{X}_{[\s{A}_{2n}]}\vg{\beta}\big)'\m{H}_{[\s{A}_n]}^{(2)'}\m{H}_{[\s{A}_n]}^{(2)}\big(R(\m{X}_{[\s{A}_{2n}]})
-\m{X}_{[\s{A}_{2n}]}\vg{\beta}\big)/m\to c>0$, which implies
$\big(R(\m{X}_{[\s{A}_{2n}]})-\m{X}_{[\s{A}_{2n}]}\vg{\beta}\big)'\m{H}_{[\s{A}_n]}^{(2)'}\m{H}_{[\s{A}_n]}\vg{\varepsilon}_{\s{A}_{n}}/m\to0$ (a.s.) by Lemma
\ref{lemma:lsc}. Note that $\vg{\varepsilon}_{\s{A}_{n}}'\m{H}_{[\s{A}_n]}\vg{\varepsilon}_{\s{A}_{n}}/m\to\sigma^2\ \text{(a.s.)}$ by Assumption \ref{as:sxx}. It
follows that\begin{equation}\label{rfa2}\psi_n(\s{A}_n)/m\to\sigma^2+c\ \text{(a.s.)}.
\end{equation}
Combining \eqref{rfa1} and \eqref{rfa2}, we complete the proof. \end{proof}

\subsection{A simulation study}\label{subsec:robrsimu}
\hskip\parindent \vspace{-0.6cm}

We conduct a small simulation to verify BSP for the LTS problem in finite-sample cases. Let $p=2$ and $\vg{\beta}=(0,0)'$ in model \eqref{lm}. Generate
$\{\v{x}_i=(x_{i1}, x_{i2})'\}_{i=1,\ldots,n}$ i.i.d. from a multivariate normal distribution $N(\v{0},\m{\Sigma})$ whose covariance matrix
$$\m{\Sigma}=\left(\begin{array}{cc}1&0.5\\0.5&1\end{array}\right).$$Then we obtain the regression matrix $\m{X}=(\v{x}_1,\ldots,\v{x}_{n})'$. There are $n_0$
observations that obey the linear relationship, i.e., $y_i=\vg{\beta}'\v{x}_i+\varepsilon_i$ for $i=1,\ldots,n_0$, where the random errors
$\varepsilon_1,\ldots,\varepsilon_{n_0}$ i.i.d. $\sim N(0,1)$. We generate $n_\mathrm{o}=n-n_0$ outliers as
\\(I): $y_i=5+\vg{\beta}'\v{x}_i+\varepsilon_i$ for $i=n_0+1,\ldots,n$, where the random errors $\varepsilon_{n_0+1},\ldots,\varepsilon_{n}$ i.i.d. $\sim N(0,1)$,
\\(II): $y_i=2x_{i1}-2x_{i2}+3x_{i1}^2+\varepsilon_i$ for $i=n_0+1,\ldots,n$, where the random errors $\varepsilon_{n_0+1},\ldots,\varepsilon_{n}$ i.i.d. $\sim N(0,3)$,
\\We search the solutions to minimize the objective functions \eqref{robrfn} through randomly generating $B$ subsets of size $m$. In this
simulation, we fix $n=20,\ n_0=15,\ m=11$ and consider three values of $B$, 100, 200, and 300. We repeat 10,000 times to compute the MOVs and MSEs as in Section
\ref{subsec:robsimu}. The results are shown in Table \ref{tab:dbr}. We can see that the results follow BSP well: as $B$ in increases, the MOV and MSE both decreases.

\begin{table}
\caption{\label{tab:dbr}Comparisons of different $B$'s in Section \ref{subsec:robrsimu}} \centering\vspace{2mm}
\begin{tabular}{*{8}{llcccccc}}\hline &&\quad\quad&\multicolumn{2}{c}{(I)}&\quad\quad&\multicolumn{2}{c}{(II)}
\\\cline{4-8}&&&MOV&MSE&\quad\quad&MOV&MSE\\\hline
\multirow{3}{*}{$B$}&$100$     &&12.464&1.2738&&6.9142&0.6723\\
                    &$200$     &&9.3486&0.9743&&5.7195&0.6589\\
                    &$300$     &&7.9660&0.8057&&5.2018&0.6479\\\hline
\end{tabular}
\end{table}

\section{Better subset for variable selection}\label{sec:bsr}
\hskip\parindent \vspace{-0.8cm}

Variable selection plays an important role in high-dimensional data analysis (B\"{u}hlmann and van de Geer 2011). Classical best subset regression ($\ell_0$-norm
regularized method) has been viewed as an infeasible method for moderate or large $p$, and other regularized methods with continuous penalties such as the nonnegative
garrote (Breiman, 1995), the lasso (Tibshirani, 1996), SCAD (Fan and Li, 2001), and MCP (Zhang, 2010) have become very popular in this area. However, Xiong (2014) showed
that, even for large $p$, best subset regression is still a valuable method since BSP for this problem, called the better-fitting better-screening rule in Xiong (2014),
holds under reasonable conditions. Therefore, we do not need to find the best subset (global solution), and a sub-optimal solution is usually satisfactory in practice.
In fact, Xiong (2014) proved the strong separation property of the objective function in best subset regression. In this section we continue discussing this problem for
both fixed and diverging $p$ cases.

\subsection{Selection for the fixed $p$ case}\label{subsec:fixp}
\hskip\parindent \vspace{-0.6cm}

Consider the linear regression model in \eqref{lm} with fixed $p$ and $\vg{\beta}$. Without loss of generality, assume that there is no intercept in \eqref{lm}, which
holds after standardizing $\m{X}$ and $\v{y}$. In this section, we denote the full model $\{1,\ldots,p\}$ by $\mathbb{Z}_p$. For $\s{A}\subset \mathbb{Z}_p$, $|\s{A}|$
denotes its cardinality, and $\m{X}_{\s{A}}$ denotes the submatrix of $\m{X}$ corresponding to $\s{A}$. As in Section \ref{sec:boer}, let $\vg{\beta}$ denote the true
parameter in model (\ref{lm}). Let $\s{A}_0$ denote the true submodel $\{j\in\mathbb{Z}_p:\ \beta_j\neq0\}$ with $d=|\s{A}_0|$. The decision space $\mathfrak{D}$ is the
power set of $\mathbb{Z}_p$, and its two subsets are
\begin{equation}\label{an0}\mathfrak{A}=\{\s{A}_0\},\ \mathfrak{B}=\mathfrak{D}\setminus\mathfrak{A}.\end{equation} We adopt the BIC
criterion (Schwarz, 1978) to select the important variables, which corresponds to the objective function
\begin{equation}\label{fn0}\psi_n(\s{A})=\big(1+|\s{A}|\log(n)/n\big)\|\v{y}-\m{X}_{\s{A}}\hat{\vg{\beta}}_{\s{A}}\big\|^2,\end{equation} where $\hat{\vg{\beta}}_{\s{A}}$
is the least squares estimator $(\m{X}_{\s{A}}'\m{X}_{\s{A}})^{-1}\m{X}_{\s{A}}'\v{y}$ under the submodel $\s{A}$. It is known that minimizing the BIC criterion leads to
consistent variable selection for the fixed $p$ case; see e.g., Shao (1997). Here we provide a stronger result that BSP for this optimization problem holds through
proving the strong separation property of $\psi_n$ in \eqref{fn0}. Our result indicates that, for two subsets, the better one having smaller BIC value is more likely to
be the true submodel.

Some notation and an assumption are needed. For $\s{A}\in \mathfrak{D}$, let $\m{H}_{\s{A}}=\m{I}_n-\m{X}_{\s{A}}(\m{X}_{\s{A}}'\m{X}_{\s{A}})^{-1}\m{X}_{\s{A}}'$ denote
the projection matrix on the subspace $\{\v{x}\in\mathbb{R}^n:\ \m{X}_{\s{A}}'\v{x}=\v{0}\}$. We denote by $\lambda_{\min}(\cdot)$ the smallest eigenvalue of a matrix.
Let $\beta_{\min}$ denote the component of $\vg{\beta}_{\s{A}_0}$ that has the smallest absolute value. Define
$$\delta_n=\min_{\s{A}_0\setminus\s{A}\neq\emptyset}\left[\frac{1}{n}\lambda_{\min}(\m{X}_{\s{A}_0\setminus\s{A}}' \m{H}_{\s{A}}\m{X}_{\s{A}_0\setminus\s{A}})\right].$$

\begin{assumption} As $n\to\infty$, $\m{X}'\m{X}/n\to\m{\Sigma}$, where $\m{\Sigma}$ is a positive definite matrix. \label{as:xx}\end{assumption}

This assumption is a standard condition to handel fixed $p$ asymptotics in linear regression (Gleser 1965; Knight and Fu 2000).

\begin{theorem} \label{th:bsr} Under Assumption \ref{as:xx}, $\{\psi_n\}$ in \eqref{fn0} strongly separates $\mathfrak{A}$ from $\mathfrak{B}$ in \eqref{an0}.
\end{theorem}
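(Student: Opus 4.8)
The plan is to use the special structure here: the decision space $\mathfrak{D}$ is the \emph{finite} power set of $\mathbb{Z}_p$, and $\mathfrak{A}=\{\s{A}_0\}$ is a singleton, so the strong separation probability of Definition \ref{def:bds} collapses to
$$P\Big(\psi_n(\s{A}_0)<\min_{\s{A}\neq\s{A}_0}\psi_n(\s{A})\Big)\to1.$$
Since $\mathfrak{B}=\mathfrak{D}\setminus\{\s{A}_0\}$ is finite, it suffices to show $P\big(\psi_n(\s{A}_0)<\psi_n(\s{A})\big)\to1$ for each fixed competitor $\s{A}\neq\s{A}_0$ and then intersect over the finitely many of them. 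I would first fix the benchmark: because $\m{H}_{\s{A}_0}\m{X}_{\s{A}_0}=\m{0}$, the true-model residual sum of squares is $\v{y}'\m{H}_{\s{A}_0}\v{y}=\vg{\varepsilon}'\m{H}_{\s{A}_0}\vg{\varepsilon}$, and writing this as $\vg{\varepsilon}'\vg{\varepsilon}-\vg{\varepsilon}'\m{X}_{\s{A}_0}(\m{X}_{\s{A}_0}'\m{X}_{\s{A}_0})^{-1}\m{X}_{\s{A}_0}'\vg{\varepsilon}$, the strong law gives the first piece $\to\sigma^2$ while Lemma \ref{lemma:lsc} together with Assumption \ref{as:xx} (for $(\m{X}_{\s{A}_0}'\m{X}_{\s{A}_0}/n)^{-1}$) kills the second, so $\vg{\varepsilon}'\m{H}_{\s{A}_0}\vg{\varepsilon}/n\to\sigma^2$ a.s. As $1+d\log(n)/n\to1$, this yields $\psi_n(\s{A}_0)/n\to\sigma^2$ a.s. The same reasoning gives $\vg{\varepsilon}'\m{H}_\s{A}\vg{\varepsilon}/n\to\sigma^2$ for every $\s{A}$.

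Next I would split the competitors into two regimes. In the underfitting regime $\s{A}_0\setminus\s{A}\neq\emptyset$, substituting $\v{y}=\m{X}_{\s{A}_0}\vg{\beta}_{\s{A}_0}+\vg{\varepsilon}$ and using $\m{H}_\s{A}\m{X}_{\s{A}_0\cap\s{A}}=\m{0}$ splits $\v{y}'\m{H}_\s{A}\v{y}$ into the bias quadratic form $\vg{\beta}_{\s{A}_0\setminus\s{A}}'\m{X}_{\s{A}_0\setminus\s{A}}'\m{H}_\s{A}\m{X}_{\s{A}_0\setminus\s{A}}\vg{\beta}_{\s{A}_0\setminus\s{A}}$, a cross term, and $\vg{\varepsilon}'\m{H}_\s{A}\vg{\varepsilon}$. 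After dividing by $n$, the cross term vanishes a.s.\ by Lemma \ref{lemma:lsc}, the error term tends to $\sigma^2$, and the bias form is bounded below by $\delta_n\|\vg{\beta}_{\s{A}_0\setminus\s{A}}\|^2$. Under Assumption \ref{as:xx} the matrix $\m{X}_{\s{A}_0\setminus\s{A}}'\m{H}_\s{A}\m{X}_{\s{A}_0\setminus\s{A}}/n$ converges to a Schur complement of the positive definite $\m{\Sigma}$, so $\liminf_n\delta_n>0$; since every coordinate of $\vg{\beta}_{\s{A}_0}$ is nonzero, $\vg{\beta}_{\s{A}_0\setminus\s{A}}\neq\v{0}$. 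Hence $\psi_n(\s{A})/n\to\sigma^2+c_\s{A}$ with $c_\s{A}>0$, strictly above the limit $\sigma^2$ of $\psi_n(\s{A}_0)/n$, so $\psi_n(\s{A}_0)<\psi_n(\s{A})$ holds eventually almost surely, hence with probability tending to one.

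In the overfitting regime $\s{A}_0\subsetneq\s{A}$ we again have $\m{H}_\s{A}\m{X}_{\s{A}_0}=\m{0}$, so both residual sums of squares are pure error quadratic forms, and I would analyze the difference directly:
$$\psi_n(\s{A})-\psi_n(\s{A}_0)=-\,\vg{\varepsilon}'(\m{H}_{\s{A}_0}-\m{H}_\s{A})\vg{\varepsilon}+\tfrac{\log(n)}{n}\big(|\s{A}|\,\vg{\varepsilon}'\m{H}_\s{A}\vg{\varepsilon}-d\,\vg{\varepsilon}'\m{H}_{\s{A}_0}\vg{\varepsilon}\big).$$
Here $\m{H}_{\s{A}_0}-\m{H}_\s{A}$ is a projection of fixed rank $|\s{A}|-d$, so the first term is nonpositive and $O_p(1)$, whereas the second equals $(|\s{A}|-d)\sigma^2\log(n)(1+o(1))$ a.s.\ and diverges to $+\infty$. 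The $\log(n)$ growth of the penalty gap therefore swamps the bounded loss in fit, so $\psi_n(\s{A})-\psi_n(\s{A}_0)\to+\infty$ in probability and $P\big(\psi_n(\s{A}_0)<\psi_n(\s{A})\big)\to1$.

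The main obstacle is the overfitting regime, where BSP hinges on the delicate balance between the $O_p(1)$ decrease in residual sum of squares caused by adding spurious variables and the $\log(n)$-sized increase in the BIC penalty; indeed this is precisely why the $\log(n)$ factor (rather than a bounded constant as in AIC) is indispensable. Note that for strong separation only a probability-one limit of the event is required, so the $O_p(1)$ control on $\vg{\varepsilon}'(\m{H}_{\s{A}_0}-\m{H}_\s{A})\vg{\varepsilon}$ suffices and no almost-sure bound on it is needed. A secondary technical point is justifying $\liminf_n\delta_n>0$ in the underfitting regime, which I would settle through the Schur-complement convergence of $\m{X}_{\s{A}_0\setminus\s{A}}'\m{H}_\s{A}\m{X}_{\s{A}_0\setminus\s{A}}/n$ under Assumption \ref{as:xx} and the finiteness of $\mathfrak{D}$.
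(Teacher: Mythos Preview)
Your proposal is correct and follows essentially the same route as the paper: the same reduction by finiteness of $\mathfrak{D}$ to per-$\s{A}$ comparisons, the same underfitting/overfitting case split, the same bias--cross--error decomposition for underfitting, and the same $O_p(1)$-versus-$\log(n)$ penalty argument for overfitting. The only cosmetic differences are that you invoke Lemma~\ref{lemma:lsc} for almost-sure control of the cross term where the paper uses a second-moment bound, and you make the Schur-complement justification of $\liminf_n\delta_n>0$ explicit where the paper simply asserts that $\delta_n$ has a positive limit point.
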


\begin{proof} First we have $\psi_n(\s{A}_0)/n\to\sigma^2$ (a.s.), For $\s{A}\in\mathfrak{D}$ with $\s{A}_0\setminus\s{A}\neq\emptyset$,
\begin{eqnarray*}&&\big(1+|\s{A}|\log(n)/n\big)^{-1}\psi_n(\s{A})\\&=&\vg{\varepsilon}'\m{H}_{\s{T}_n}\vg{\varepsilon}
+2\vg{\beta}_{\s{A}_0}'\m{X}_{\s{A}_0}'\m{H}_{\s{A}}\vg{\varepsilon} +\vg{\beta}_{\s{A}_0}'\m{X}_{\s{A}_0}'\m{H}_{\s{A}}\m{X}_{\s{A}_0}\vg{\beta}_{\s{A}_0}
\\&=&\vg{\varepsilon}'\m{H}_{\s{T}_n}\vg{\varepsilon}
+2\vg{\beta}_{\s{A}_0}'\m{X}_{\s{A}_0}'\m{H}_{\s{A}}\vg{\varepsilon}+\vg{\beta}_{\s{A}_0\setminus\s{A}}'\m{X}_{\s{A}_0\setminus\s{A}}'
\m{H}_{\s{A}}\m{X}_{\s{A}_0\setminus\s{T}_n}\vg{\beta}_{\s{A}_0\setminus\s{A}}
\\&\geqslant&\vg{\varepsilon}'\m{H}_{\s{A}}\vg{\varepsilon}
+2\vg{\beta}_{\s{A}_0}'\m{X}_{\s{A}_0}'\m{H}_{\s{A}}\vg{\varepsilon}+n\delta_n|\beta_{\min}|^2.\end{eqnarray*}Note that
$E(\vg{\beta}_{\s{A}_0}'\m{X}_{\s{A}_0}'\m{H}_{\s{A}}\vg{\varepsilon})^2/n^2
=\sigma^2\mathrm{tr}(\vg{\beta}_{\s{A}_0}'\m{X}_{\s{A}_0}'\m{H}_{\s{A}}\m{X}_{\s{A}_0}\vg{\beta}_{\s{A}_0})/n^2\to0$,
$\vg{\varepsilon}'\m{H}_{\s{A}}\vg{\varepsilon}/n\to\sigma^2$ (a.s.), and $\delta_n$ has a positive limit point. It follows
that\begin{equation}\label{a1}P\big(\psi_n(\s{A}_0)<\psi_n(\s{A})\big)\to1.\end{equation}

For $\s{A}\supset\s{A}_0$ with $\s{A}\neq\s{A}_0$,\begin{eqnarray*}&&\psi_n(\s{A}_0)-\psi_n(\s{A})=\vg{\varepsilon}'(\m{H}_{\s{A}_0}-\m{H}_{\s{A}})\vg{\varepsilon}
+\vg{\varepsilon}'\m{H}_{\s{A}_0}\vg{\varepsilon}|\s{A}_0|\log(n)/n-\vg{\varepsilon}'\m{H}_{\s{A}}\vg{\varepsilon}|\s{A}|\log(n)/n.\end{eqnarray*}Note that
$\m{H}_{\s{A}_0}-\m{H}_{\s{A}}$ converges to an idempotent matrix of rank $|\s{A}|-|\s{A}_0|$, which implies
$\vg{\varepsilon}'(\m{H}_{\s{A}_0}-\m{H}_{\s{A}})\vg{\varepsilon}=O_p(1)$.
Therefore,\begin{equation}\label{a2}\big(\psi_n(\s{A}_0)-\psi_n(\s{A})\big)/\log(n)\to-\sigma^2(|\s{A}|-|\s{A}_0|)\quad\text{in probability}.\end{equation}  Note that
$\mathfrak{D}$ is a finite set. By \eqref{a1} and \eqref{a2}, \begin{eqnarray*}&&P\left(\psi_n(\s{A}_0)<\inf_{\s{A}\in\mathfrak{B}}\psi_n(\s{A})\right)\\&\geqslant&
P\left(\psi_n(\s{A}_0)<\inf_{\s{A}_0\setminus\s{A}\neq\emptyset}\psi_n(\s{A})\right)+P\left(\psi_n(\s{A}_0)<\inf_{\s{A}\supset\s{A}_0,\
\s{A}\neq\s{A}_0}\psi_n(\s{A})\right)-1\\&\to&1,\end{eqnarray*} which completes the proof.\end{proof}

\begin{remark}With almost the same proof, Theorem \ref{th:bsr} also holds for the objective function
\begin{equation*} \psi_n(\s{A})=\big(1+|\s{A}|\lambda_n/n\big)\|\v{y}-\m{X}_{\s{A}}\hat{\vg{\beta}}_{\s{A}}\big\|^2 \end{equation*}
with $\lambda_n\to\infty$ and $\lambda_n/n\to0$, which corresponds to the GIC criterion (Rao and Wu 1989). BIC is its special case corresponding to
$\lambda_n=\log(n)$.\end{remark}

\subsection{Screening for the diverging $p$ case}\label{subsec:bsr1}
\hskip\parindent \vspace{-0.6cm}

When $p$ increases faster than $n$, it becomes more difficult to find consistent variable selection procedures. A compromised strategy is to use a two-stage procedure
(Fan and Lv 2008). In the first stage, a screening approach is applied to pick $M$ variables, where $M<n$ is specified. In the second stage, the coefficients in the
screened $M$-dimensional submodel can be estimated by well-developed regression techniques. To guarantee the effectiveness of this procedure, the first stage should
possess the sure screening property, i.e., it should retain all important variables in the model asymptotically (Fan and Lv 2008). A number of screening procedures have
been studied in the literature; see Fan and Lv (2008), Hall and Miller (2009), Fan et al. (2009), Wang (2009), and Li et al. (2012), among others. Following Xiong
(2014), in this subsection we establish BSP for best subset regression in the screening problem.

The model and related notation are the same as in Section \ref{subsec:fixp} except that $p$ and the true parameter $\vg{\beta}$ can depend on $n$, We let $\s{A}_{0n}$
denote the true submodel, which depends on $n$ as well. For a specified $M$ with $d\leqslant M<n$, the decision space is $\mathfrak{D}_n=\{\s{A}\subset\mathbb{Z}_p:\
|\s{A}|=M\}$, and two decision subsets are
\begin{equation*}\mathfrak{A}_n=\{\s{A}\subset\mathfrak{D}_n:\ \s{A}\supset\s{A}_0\},\ \mathfrak{B}_n=\mathfrak{D}_n\setminus\mathfrak{A}_n.\end{equation*}Denote
\begin{equation}\label{an}\mathfrak{A}=\prod_{n=1}^\infty\mathfrak{A}_n,\ \mathfrak{B}=\prod_{n=1}^\infty\mathfrak{B}_n.\end{equation}
The objective function is
\begin{equation}\label{fn}\psi_n(\s{A})=\|\v{y}-\m{X}_{\s{A}}\hat{\vg{\beta}}_{\s{A}}\big\|^2,\end{equation} where $\hat{\vg{\beta}}_{\s{A}}$ is the least squares estimator
$(\m{X}_{\s{A}}'\m{X}_{\s{A}})^{-}\m{X}_{\s{A}}'\v{y}$ under the submodel $\s{A}$. Here we allow $\m{X}_{\s{A}}$ not to be of full column rank, and therefore the
generalized inverse ``$^{-}$" is used in $\hat{\vg{\beta}}_{\s{A}}$. Similar to Section \ref{subsec:fixp}, for $\s{A}\in \mathfrak{D}_n$, denote
$\m{H}_{\s{A}}=\m{I}_n-\m{X}_{\s{A}}(\m{X}_{\s{A}}'\m{X}_{\s{A}})^{-}\m{X}_{\s{A}}'$ and
$$\delta_n=\min_{\s{A}\notin \mathfrak{A}_n}\left[\frac{1}{n}\lambda_{\min}(\m{X}_{\s{A}_{0n}\setminus\s{A}}' \m{H}_{\s{A}}\m{X}_{\s{A}_{0n}\setminus\s{A}})\right].$$

Note that the objective function $\psi_n$ in \eqref{fn} is the residual sum of squares, which describes the fit of a submodel $\s{A}$. Based on this, Xiong (2014)
provided the better-fitting better-screening rule for screening important variables, i.e., a better subset with better fit is more likely to include all important
variables. This rule is actually the BSP for the problem of minimizing $\psi_n$ in \eqref{fn}, and follows from the following strong separation property of $\psi_n$
proved by Xiong (2014).

\begin{assumption}The random error $\vg{\varepsilon}$ in (\ref{lm}) follows a normal distribution $N(\v{0}, \sigma^2\m{I}_n)$. \label{as:norm}\end{assumption}

\begin{assumption}There exists a constant $C>0$ such that $\sum_{i=1}^nx_{ij}^2/n\leqslant C$ for any $j\in\s{A}_0$.\label{as:stand}\end{assumption}

\begin{assumption}\label{as:bsr} As $n\rightarrow\infty$, $(\delta_n\beta_{\min}^2)^{-1}=O(n^{\gamma_1}),\
\|\vg{\beta}\|(\delta_n\beta_{\min}^2)^{-1}=O(n^{\gamma_2}),\ d=O(n^{\gamma_3}),\ M=O(n^{\gamma_4})$, and $\log p=O(n^{\gamma_5})$, where $\gamma_i\geqslant 0\
(i=1,\ldots,5)$, $2\gamma_1+\gamma_4+\gamma_5<1$, and $2\gamma_2+\gamma_3+\gamma_4+\gamma_5<1$.\label{as:me}\end{assumption}

\begin{theorem} \label{th:bsr} Under Assumptions \ref{as:norm}--\ref{as:bsr}, $\{\psi_n\}$ in \eqref{fn} strongly
separates $\mathfrak{A}$ from $\mathfrak{B}$ in \eqref{an}.
\end{theorem}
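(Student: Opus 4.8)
The plan is to verify the strong separation condition of Definition \ref{def:bds2} head-on, namely to show $P\big(\sup_{\s{A}\in\mathfrak{A}_n}\psi_n(\s{A})<\inf_{\s{A}\in\mathfrak{B}_n}\psi_n(\s{A})\big)\to1$, by splitting the residual sum of squares $\psi_n(\s{A})=\v{y}'\m{H}_{\s{A}}\v{y}$ into a deterministic signal part and two stochastic fluctuation parts via the projection $\m{H}_{\s{A}}$. Since $\v{y}=\m{X}_{\s{A}_{0n}}\vg{\beta}_{\s{A}_{0n}}+\vg{\varepsilon}$ and $\m{H}_{\s{A}}$ annihilates every column of $\m{X}$ indexed by $\s{A}$, for $\s{A}\in\mathfrak{A}_n$ (i.e. $\s{A}\supset\s{A}_{0n}$) the entire fitted mean is removed, so $\psi_n(\s{A})=\vg{\varepsilon}'\m{H}_{\s{A}}\vg{\varepsilon}\leqslant\|\vg{\varepsilon}\|^2$. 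This gives the crude but uniform and deterministic upper bound $\sup_{\s{A}\in\mathfrak{A}_n}\psi_n(\s{A})\leqslant\|\vg{\varepsilon}\|^2$.

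For $\s{A}\in\mathfrak{B}_n$ at least one true variable is omitted, so $\m{H}_{\s{A}}\m{X}_{\s{A}_{0n}}\vg{\beta}_{\s{A}_{0n}}=\m{H}_{\s{A}}\m{X}_{\s{A}_{0n}\setminus\s{A}}\vg{\beta}_{\s{A}_{0n}\setminus\s{A}}=:\v{u}_{\s{A}}$ is nonzero. Writing $\m{P}_{\s{A}}=\m{I}_n-\m{H}_{\s{A}}$ and expanding the square yields the decomposition $\psi_n(\s{A})=\|\vg{\varepsilon}\|^2+S_{\s{A}}+T_{\s{A}}-\vg{\varepsilon}'\m{P}_{\s{A}}\vg{\varepsilon}$, where the signal term is $S_{\s{A}}=\|\v{u}_{\s{A}}\|^2=\vg{\beta}_{\s{A}_{0n}\setminus\s{A}}'\m{X}_{\s{A}_{0n}\setminus\s{A}}'\m{H}_{\s{A}}\m{X}_{\s{A}_{0n}\setminus\s{A}}\vg{\beta}_{\s{A}_{0n}\setminus\s{A}}$ and the cross term is $T_{\s{A}}=2\v{u}_{\s{A}}'\vg{\varepsilon}$. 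By the definitions of $\delta_n$ and $\beta_{\min}$ we get the uniform lower bound $S_{\s{A}}\geqslant n\delta_n\|\vg{\beta}_{\s{A}_{0n}\setminus\s{A}}\|^2\geqslant n\delta_n\beta_{\min}^2$. Subtracting the $\mathfrak{A}_n$ bound, the theorem reduces to showing $\min_{\s{A}\in\mathfrak{B}_n}(S_{\s{A}}+T_{\s{A}}-\vg{\varepsilon}'\m{P}_{\s{A}}\vg{\varepsilon})>0$ with probability tending to one, i.e. that the signal $S_{\s{A}}$ dominates the two fluctuations uniformly over the omitting subsets.

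The core device is a union bound over the $\binom{p}{M}\leqslant p^M$ subsets in $\mathfrak{D}_n$, exploiting $\log\binom{p}{M}\leqslant M\log p$. Under Assumption \ref{as:norm} the cross term is Gaussian, $T_{\s{A}}\sim N(0,4\sigma^2 S_{\s{A}})$, and $\vg{\varepsilon}'\m{P}_{\s{A}}\vg{\varepsilon}/\sigma^2$ is chi-square with $\mathrm{rank}(\m{X}_{\s{A}})\leqslant M$ degrees of freedom. A chi-square concentration inequality with the union bound gives $\max_{\s{A}\in\mathfrak{B}_n}\vg{\varepsilon}'\m{P}_{\s{A}}\vg{\varepsilon}=O_p(\sigma^2 M\log p)$, which is $o(n\delta_n\beta_{\min}^2)$ because $M\log p=o(n\delta_n\beta_{\min}^2)$ is implied by $2\gamma_1+\gamma_4+\gamma_5<1$. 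For the cross term, a Gaussian tail bound together with the upper bound $S_{\s{A}}\leqslant Cnd\|\vg{\beta}\|^2$ (from Assumption \ref{as:stand} and Cauchy--Schwarz) controls $\max_{\s{A}\in\mathfrak{B}_n}|T_{\s{A}}|$; requiring this maximum to stay below the signal gap amounts to $dM\log p\,\|\vg{\beta}\|^2=o(n(\delta_n\beta_{\min}^2)^2)$, which is exactly what $2\gamma_2+\gamma_3+\gamma_4+\gamma_5<1$ guarantees. Hence both fluctuations are $o(S_{\s{A}})$ uniformly over $\mathfrak{B}_n$, the signal dominates, and the separation follows.

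The step I expect to be the main obstacle is the \emph{uniform} control of the fluctuations over the exponentially many subsets against the possibly slowly shrinking signal gap $n\delta_n\beta_{\min}^2$: the union bound costs a factor $M\log p$ in the exponent, and the delicate point is to show that the standard-deviation scale of the cross term, $\sigma\sqrt{S_{\s{A}}}$, stays below $S_{\s{A}}$ for \emph{every} omitting subset simultaneously, which forces the interplay between $\|\vg{\beta}\|$, $d$, $M$, $\log p$ and $\delta_n\beta_{\min}^2$ encoded in Assumption \ref{as:bsr}. Matching the crude cross-term bound to the $\gamma_2,\gamma_3$ condition is the most error-prone bookkeeping. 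Since the statement is attributed to Xiong (2014), an alternative once the decomposition $\psi_n(\s{A})=\|\vg{\varepsilon}\|^2+S_{\s{A}}+T_{\s{A}}-\vg{\varepsilon}'\m{P}_{\s{A}}\vg{\varepsilon}$ is in place is simply to invoke that reference.
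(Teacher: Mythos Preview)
Your proposal is correct, and the paper itself does not give a proof of this theorem at all: it simply attributes the result to Xiong (2014) and states it. So your closing remark---that once the decomposition is in place one could just invoke that reference---is precisely what the paper does.

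Your sketch is in fact the standard argument and matches the spirit of the paper's proof of the subsequent weak-separation theorem (Theorem~\ref{th:bsr} in Section~\ref{subsec:bsr1}), which uses the same expansion $\psi_n(\s{A}_n)-\psi_n(\s{B}_n)=\vg{\varepsilon}'(\m{I}_n-\m{H}_{\s{A}_n})\vg{\varepsilon}-\vg{\varepsilon}'(\m{I}_n-\m{H}_{\s{B}_n})\vg{\varepsilon}-2\vg{\beta}_{\s{A}_{0n}}'\m{X}_{\s{A}_{0n}}'\m{H}_{\s{B}_n}\vg{\varepsilon}-S_{\s{B}_n}$ together with the bounds $S_{\s{B}_n}\geqslant n\delta_n\beta_{\min}^2$ and $\mathrm{Var}(T_{\s{B}_n})\leqslant 4\sigma^2 Cnd\|\vg{\beta}\|^2$. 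The only additional ingredient for the \emph{strong} version is exactly what you identified: replacing the single-sequence control by a union bound over $\binom{p}{M}\leqslant\exp(M\log p)$ subsets, which converts the pointwise Gaussian and chi-square tail bounds into uniform ones at the cost of the factor $M\log p$ in the exponent. Your rate calculations are right: the chi-square fluctuation requires $M\log p=o(n\delta_n\beta_{\min}^2)$, which is covered (with room to spare) by $2\gamma_1+\gamma_4+\gamma_5<1$, and the cross-term control $dM\log p\,\|\vg{\beta}\|^2=o\big(n(\delta_n\beta_{\min}^2)^2\big)$ is exactly $2\gamma_2+\gamma_3+\gamma_4+\gamma_5<1$.
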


Assumption \ref{as:bsr} is strong in that $M$ cannot be too large, whereas in practice we usually use a large $M=O(n^\gamma)$ with an unrestrictive $\gamma\in(0,1)$, or
even $M=O(n)$, for insurance. We next show that, under fairly weak conditions, $\{\psi_n\}$ has the weak separation property. By Theorem \ref{th:lt2}, Corollary
\ref{cor:w}, and Remark \ref{rk:ctb2}, the weak separation property suffices to imply the better-fitting better-screening rule for practice use.

\begin{assumption}\label{as:bsrw} As $n\to\infty$, $M=o(n\delta_n\beta_{\min}^2)$, and for all $\epsilon>0$,
$\sum_{n=1}^\infty\exp(-\epsilon n\delta_n\beta_{\min}^2)<\infty$, $\sum_{n=1}^\infty\exp\big(-\epsilon
n\delta_n^2\beta_{\min}^4/(d\|\vg{\beta}\|^2)\big)<\infty$.\end{assumption}

\begin{assumption}\label{as:bsrw2} As $n\to\infty$, $M/n\to\alpha\in(0,1)$ and ${\mathrm{rank}}(\m{H}_{\s{A}})/n\to\lambda\in[0,1)$ for all
$\{\s{A}_n\}\in \mathfrak{A}\cup\mathfrak{B}$; for all $n$, $\delta_n\beta_{\min}^2\geqslant c$, where $c>0$ is a constant; for all $\epsilon>0$,
$\sum_{n=1}^\infty\exp\big(-\epsilon n/(d\|\vg{\beta}\|^2)\big)<\infty$. \end{assumption}

\begin{lemma}\label{lemma:chi}(i) Let $\xi_n\sim N(0,1)$ for all $n$. Suppose that $b_n$ satisfies $\sum_{n=1}^\infty\exp(-\epsilon b_n^2)<\infty$ for all $\epsilon>0$.
Then $\xi_n/b_n\to0$ (a.s.).
\\(ii) Let $\xi_n\sim\chi_{r_n}^2$, where $r_n$ is a positive
integer for all $n$. Suppose that $b_n$ satisfies $r_n/b_n\to\alpha\in[0,1]$ and $\sum_{n=1}^\infty\exp(-\epsilon b_n)<\infty$ for all $\epsilon>0$. Then
$(\xi_n-r_n)/b_n\to0$ (a.s.).\end{lemma}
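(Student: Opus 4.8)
The plan is to prove both parts through the first Borel--Cantelli lemma, applied to the events $\{|\xi_n/b_n|>\epsilon\}$ in part (i) and $\{|\xi_n-r_n|>\epsilon b_n\}$ in part (ii). A point worth stressing at the outset is that no independence among the $\xi_n$ is assumed, and none is needed: the first Borel--Cantelli lemma yields $P(A_n\ \text{i.o.})=0$ whenever $\sum_n P(A_n)<\infty$, irrespective of dependence. Since almost sure convergence of a sequence $\zeta_n$ to $0$ is equivalent to $P(|\zeta_n|>1/k\ \text{i.o.})=0$ for every $k\in\mathbb{N}$, with $\zeta_n=\xi_n/b_n$ in (i) and $\zeta_n=(\xi_n-r_n)/b_n$ in (ii), it suffices to establish $\sum_n P(|\zeta_n|>\epsilon)<\infty$ for each fixed $\epsilon>0$ and then intersect over the countable family $\epsilon=1/k$.

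For part (i), the main tool is the standard Gaussian tail bound $P(|\xi_n|>t)\leqslant 2\exp(-t^2/2)$. Taking $t=\epsilon b_n$ gives $\sum_n P(|\xi_n|>\epsilon b_n)\leqslant 2\sum_n\exp(-\epsilon^2 b_n^2/2)$, which is finite by the hypothesis $\sum_n\exp(-\epsilon' b_n^2)<\infty$ upon choosing $\epsilon'=\epsilon^2/2$. Here the exponent is genuinely quadratic in $b_n$, matching the form of the assumed summability condition, so the argument closes immediately.

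For part (ii), I would invoke the standard exponential (Chernoff-type) concentration bounds for a $\chi^2_{r_n}$ variable, namely $P(\xi_n\geqslant r_n+2\sqrt{r_n x}+2x)\leqslant e^{-x}$ and $P(\xi_n\leqslant r_n-2\sqrt{r_n x})\leqslant e^{-x}$ for all $x>0$, which follow from a routine optimization of the moment generating function $Ee^{t(\xi_n-r_n)}=e^{-tr_n}(1-2t)^{-r_n/2}$ or may be cited from Laurent and Massart (2000). Because $r_n/b_n\to\alpha\leqslant 1$, we have $r_n\leqslant 2b_n$ for all large $n$, and this is the key fact that converts the chi-square tails into the linear-in-$b_n$ form demanded by the hypothesis. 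For the upper tail, choosing $x=c\,b_n$ with $c$ small enough that $2\sqrt{2c}+2c\leqslant\epsilon$ forces $2\sqrt{r_n x}+2x\leqslant\epsilon b_n$, whence $P(\xi_n-r_n\geqslant\epsilon b_n)\leqslant e^{-cb_n}$. For the lower tail, setting $2\sqrt{r_n x}=\epsilon b_n$, i.e.\ $x=\epsilon^2 b_n^2/(4r_n)$, and using $r_n\leqslant 2b_n$ gives $x\geqslant \epsilon^2 b_n/8$, so $P(\xi_n-r_n\leqslant-\epsilon b_n)\leqslant e^{-\epsilon^2 b_n/8}$ (the probability being $0$ outright once $r_n<\epsilon b_n$, which disposes of the case $\alpha=0$). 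Summing both estimates and invoking $\sum_n\exp(-\epsilon' b_n)<\infty$ completes the Borel--Cantelli step.

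The step I expect to demand the most care is the lower-tail estimate in part (ii): a naive choice of $x$ produces an exponent of order $b_n^2/r_n$, which is the \emph{wrong} rate and would instead require a summability hypothesis in $b_n^2$. The entire argument hinges on exploiting $r_n=O(b_n)$, guaranteed by $\alpha\leqslant1$, to downgrade this to a clean linear rate bounded below by a multiple of $b_n$; tracking the degenerate case $\alpha=0$, where the lower-tail event is eventually empty, is a minor but necessary piece of bookkeeping.
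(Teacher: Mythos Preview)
Your proposal is correct and follows essentially the same route as the paper: both parts are reduced via the first Borel--Cantelli lemma to summability of the tail probabilities, with Gaussian and chi-square exponential tail bounds supplying the needed estimates. The only cosmetic difference is in part (ii): the paper quotes a single two-sided bound from Xiong (2014), namely $P(|\xi_n/r_n-1|>\epsilon b_n/r_n)\leqslant 2\exp\bigl(-\tfrac{\epsilon^2}{4}b_n(1+r_n/b_n)^{-1}\bigr)$, which already has the linear-in-$b_n$ exponent, whereas you derive the same rate from the Laurent--Massart upper and lower tail inequalities separately; both arrive at $\exp(-c\,b_n)$ decay once $r_n/b_n$ is bounded.
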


\begin{proof} (i) By the Borel-Cantelli lemma, it suffices to show that, for all $\epsilon>0$,
\begin{equation}\label{cc}\sum_{n=1}^\infty P(|\xi_n/b_n|>\epsilon)<\infty.\end{equation}Let $\Phi$ denote the c.d.f. of $\xi_n$. We have
$P(|\xi_n/b_n|>\epsilon)=2[1-\Phi(\epsilon b_n)]\leqslant(\epsilon b_n)^{-1}\exp(-\epsilon^2 b_n^2/2)\leqslant\exp(-\epsilon^2 b_n^2/2)$ for sufficiently large $n$,
which implies \eqref{cc}.
\\(ii) By Lemma 1 in Xiong (2014), for all $\epsilon>0$,
$$P(|\xi_n-r_n|/b_n>\epsilon)=P(|\xi_n/r_n-1|>\epsilon b_n/r_n)\leqslant2\exp\left(-\frac{\epsilon^2}{4}b_n(1+r_n/b_n)^{-1}\right).$$
By the Borel-Cantelli lemma, we complete the proof.\end{proof}

\begin{theorem} \label{th:bsr} Under Assumption  \ref{as:norm} and \ref{as:stand}, if Assumption \ref{as:bsrw} or \ref{as:bsrw2} holds, then $\{\psi_n\}$ in \eqref{fn}
separates $\mathfrak{A}$ from $\mathfrak{B}$ in \eqref{an}. \end{theorem}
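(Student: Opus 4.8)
The plan is to verify the weak separation property of Definition \ref{def:bds2} directly: fix arbitrary deterministic sequences $\{\s{A}_n\}\in\mathfrak{A}$ and $\{\s{B}_n\}\in\mathfrak{B}$ (so $\s{A}_n\supset\s{A}_{0n}$ and $\s{A}_{0n}\setminus\s{B}_n\neq\emptyset$ for all $n$) and show $\limsup_n[\psi_n(\s{A}_n)-\psi_n(\s{B}_n)]/a_n<0$ (a.s.) for a scaling with $a_n^{-1}=O(1)$, which suffices by the scaling principle of Remark \ref{rk:scale} applied to \eqref{wsp2}. Writing $\vg{\mu}=\m{X}\vg{\beta}=\m{X}_{\s{A}_{0n}}\vg{\beta}_{\s{A}_{0n}}$ and $\v{y}=\vg{\mu}+\vg{\varepsilon}$, I would first record that $\m{H}_{\s{A}_n}\vg{\mu}=\v{0}$ because $\mathrm{col}(\m{X}_{\s{A}_{0n}})\subseteq\mathrm{col}(\m{X}_{\s{A}_n})$, so $\psi_n(\s{A}_n)=\vg{\varepsilon}'\m{H}_{\s{A}_n}\vg{\varepsilon}$, whereas $\psi_n(\s{B}_n)=\vg{\mu}'\m{H}_{\s{B}_n}\vg{\mu}+2\vg{\mu}'\m{H}_{\s{B}_n}\vg{\varepsilon}+\vg{\varepsilon}'\m{H}_{\s{B}_n}\vg{\varepsilon}$. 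Exactly as in the fixed-$p$ proof, annihilating the columns in $\s{B}_n$ gives $\vg{\mu}'\m{H}_{\s{B}_n}\vg{\mu}=\vg{\beta}_{\s{A}_{0n}\setminus\s{B}_n}'\m{X}_{\s{A}_{0n}\setminus\s{B}_n}'\m{H}_{\s{B}_n}\m{X}_{\s{A}_{0n}\setminus\s{B}_n}\vg{\beta}_{\s{A}_{0n}\setminus\s{B}_n}\geq n\delta_n\beta_{\min}^2$, the dominant negative contribution to the difference.

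The main obstacle is the pair of noise quadratic forms $\vg{\varepsilon}'\m{H}_{\s{A}_n}\vg{\varepsilon}$ and $\vg{\varepsilon}'\m{H}_{\s{B}_n}\vg{\varepsilon}$: each has mean of order $n$, which dwarfs the bias $n\delta_n\beta_{\min}^2$ whenever $\delta_n\beta_{\min}^2\to0$, so bounding them one at a time is hopeless. The device I would use is to write $\m{H}_{\s{A}}=\m{I}_n-\m{P}_{\s{A}}$ with $\m{P}_{\s{A}}=\m{X}_{\s{A}}(\m{X}_{\s{A}}'\m{X}_{\s{A}})^{-}\m{X}_{\s{A}}'$, so that the common $\|\vg{\varepsilon}\|^2$ cancels and $\vg{\varepsilon}'\m{H}_{\s{A}_n}\vg{\varepsilon}-\vg{\varepsilon}'\m{H}_{\s{B}_n}\vg{\varepsilon}=\vg{\varepsilon}'\m{P}_{\s{B}_n}\vg{\varepsilon}-\vg{\varepsilon}'\m{P}_{\s{A}_n}\vg{\varepsilon}$. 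Under Assumption \ref{as:norm} each term here is $\sigma^2$ times a $\chi^2$ variable with $\rho_{\s{A}_n}=\mathrm{rank}(\m{X}_{\s{A}_n})\leq M$ (resp. $\rho_{\s{B}_n}\leq M$) degrees of freedom, so both live at the scale $M$ rather than $n$. The cross term is Gaussian, $\vg{\mu}'\m{H}_{\s{B}_n}\vg{\varepsilon}\sim N(0,\sigma^2\vg{\mu}'\m{H}_{\s{B}_n}\vg{\mu})$, and I would control its variance via Assumption \ref{as:stand}, which yields $\vg{\mu}'\m{H}_{\s{B}_n}\vg{\mu}\leq\|\vg{\mu}\|^2\leq Cnd\|\vg{\beta}\|^2$.

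Under Assumption \ref{as:bsrw} I would take $a_n=n\delta_n\beta_{\min}^2$ (which tends to $\infty$ since $M=o(n\delta_n\beta_{\min}^2)$ with $M\geq1$, hence $a_n^{-1}=O(1)$). Then $\vg{\varepsilon}'\m{P}_{\s{A}_n}\vg{\varepsilon}/a_n\to0$ and $\vg{\varepsilon}'\m{P}_{\s{B}_n}\vg{\varepsilon}/a_n\to0$ (a.s.) by Lemma \ref{lemma:chi}(ii) with $b_n=a_n$, using $\rho_{\s{A}_n}/a_n\leq M/(n\delta_n\beta_{\min}^2)\to0$ and $\sum_n\exp(-\epsilon n\delta_n\beta_{\min}^2)<\infty$; the cross term $2\vg{\mu}'\m{H}_{\s{B}_n}\vg{\varepsilon}/a_n\to0$ (a.s.) by Lemma \ref{lemma:chi}(i) applied to the standard normal $\vg{\mu}'\m{H}_{\s{B}_n}\vg{\varepsilon}/(\sigma\sqrt{\vg{\mu}'\m{H}_{\s{B}_n}\vg{\mu}})$ with $b_n=\sqrt{n}\,\delta_n\beta_{\min}^2/(\sqrt{d}\|\vg{\beta}\|)$, whose requirement $\sum_n\exp(-\epsilon b_n^2)<\infty$ is exactly the last condition of Assumption \ref{as:bsrw}; and $\vg{\mu}'\m{H}_{\s{B}_n}\vg{\mu}/a_n\geq1$. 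Collecting these gives $\limsup_n[\psi_n(\s{A}_n)-\psi_n(\s{B}_n)]/a_n\leq-1<0$ (a.s.).

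Under Assumption \ref{as:bsrw2} I would instead take $a_n=n$. Now $\vg{\varepsilon}'\m{P}_{\s{A}_n}\vg{\varepsilon}/n$ and $\vg{\varepsilon}'\m{P}_{\s{B}_n}\vg{\varepsilon}/n$ both converge (a.s.) to the common limit $\sigma^2(1-\lambda)$ by Lemma \ref{lemma:chi}(ii), since $\rho_{\s{A}_n}/n,\rho_{\s{B}_n}/n\to1-\lambda$ (because $\mathrm{rank}(\m{H}_{\s{A}_n})/n,\mathrm{rank}(\m{H}_{\s{B}_n})/n\to\lambda$) and $\sum_n\exp(-\epsilon n)<\infty$, so their difference vanishes; the cross term $2\vg{\mu}'\m{H}_{\s{B}_n}\vg{\varepsilon}/n\to0$ (a.s.) by Lemma \ref{lemma:chi}(i) with $b_n=\sqrt{n/(d\|\vg{\beta}\|^2)}$, whose summability requirement is precisely $\sum_n\exp(-\epsilon n/(d\|\vg{\beta}\|^2))<\infty$ from Assumption \ref{as:bsrw2}; and $\vg{\mu}'\m{H}_{\s{B}_n}\vg{\mu}/n\geq\delta_n\beta_{\min}^2\geq c>0$. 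Hence $\limsup_n[\psi_n(\s{A}_n)-\psi_n(\s{B}_n)]/n\leq-c<0$ (a.s.). In both cases Remark \ref{rk:scale} upgrades the scaled statement to \eqref{wsp2}, establishing the separation property; I expect the only delicate points to be the exact matching of the Lemma \ref{lemma:chi} hypotheses to the tailored summability conditions and the degrees-of-freedom bookkeeping when $\m{X}_{\s{A}}$ may be rank-deficient.
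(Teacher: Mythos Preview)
Your proposal is correct and follows essentially the same approach as the paper's own proof: decompose $\psi_n(\s{A}_n)-\psi_n(\s{B}_n)$, rewrite $\vg{\varepsilon}'\m{H}_{\s{A}_n}\vg{\varepsilon}-\vg{\varepsilon}'\m{H}_{\s{B}_n}\vg{\varepsilon}$ as a difference of the low-rank projection forms $\vg{\varepsilon}'(\m{I}_n-\m{H}_{\s{A}_n})\vg{\varepsilon}-\vg{\varepsilon}'(\m{I}_n-\m{H}_{\s{B}_n})\vg{\varepsilon}$ (your $\m{P}_{\s{A}}$), bound the bias term below by $n\delta_n\beta_{\min}^2$, control the cross term via Assumption~\ref{as:stand} and Lemma~\ref{lemma:chi}(i), and control the $\chi^2$ terms via Lemma~\ref{lemma:chi}(ii), taking $a_n=n\delta_n\beta_{\min}^2$ under Assumption~\ref{as:bsrw} and $a_n=n$ under Assumption~\ref{as:bsrw2}. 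Your treatment is slightly more explicit about the degrees-of-freedom bookkeeping and the matching of the summability hypotheses, but the argument is the paper's.
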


\begin{proof} If Assumption \ref{as:bsrw} holds, It suffices to show that, for any $\{\s{A}_n\}\in \mathfrak{A}$
and $\{\s{B}_n\}\in \mathfrak{B}$,
\begin{equation}\label{fs}\limsup_{n\to\infty}\left[\psi_n(\s{A}_n)/(n\delta_n\beta_{\min}^2)-\psi_n(\s{B}_n)/(n\delta_n\beta_{\min}^2)\right]<0\quad \text{(a.s.)}.
\end{equation}We have
\begin{eqnarray*}&&\psi_n(\s{A}_n)-\psi_n(\s{B}_n)\\&=&\vg{\varepsilon}'\m{H}_{\s{A}_n}\vg{\varepsilon}-(\vg{\varepsilon}'\m{H}_{\s{B}_n}\vg{\varepsilon}
+2\vg{\beta}_{\s{A}_{0n}}'\m{X}_{\s{A}_{0n}}'\m{H}_{\s{B}_n}\vg{\varepsilon}
+\vg{\beta}_{\s{A}_{0n}}'\m{X}_{\s{A}_{0n}}'\m{H}_{\s{B}_n}\m{X}_{\s{A}_{0n}}\vg{\beta}_{\s{A}_{0n}})
\\&=&\vg{\varepsilon}'(\m{I}_n-\m{H}_{\s{A}_n})\vg{\varepsilon}-\vg{\varepsilon}'(\m{I}_n-\m{H}_{\s{B}_n})\vg{\varepsilon}
-2\vg{\beta}_{\s{A}_{0n}}'\m{X}_{\s{A}_{0n}}'\m{H}_{\s{B}_n}\vg{\varepsilon}-\vg{\beta}_{\s{A}_{0n}\setminus\s{B}_n}'\m{X}_{\s{A}_{0n}\setminus\s{B}_n}'
\m{H}_{\s{B}_n}\m{X}_{\s{A}_{0n}\setminus\s{B}_n}\vg{\beta}_{\s{A}_{0n}\setminus\s{B}_n}
\\&\leqslant&\vg{\varepsilon}'(\m{I}_n-\m{H}_{\s{A}_n})\vg{\varepsilon}-\vg{\varepsilon}'(\m{I}_n-\m{H}_{\s{B}_n})\vg{\varepsilon}
-2\vg{\beta}_{\s{A}_{0n}}'\m{X}_{\s{A}_{0n}}'\m{H}_{\s{B}_n}\vg{\varepsilon}-n\delta_n\beta_{\min}^2.\end{eqnarray*}By Lemma \ref{lemma:chi} (ii) and Assumption
\ref{as:bsrw}, $[\vg{\varepsilon}'(\m{I}_n-\m{H}_{\s{A}_n})\vg{\varepsilon}-\vg{\varepsilon}'(\m{I}_n-\m{H}_{\s{B}_n})\vg{\varepsilon}]/(n\delta_n\beta_{\min}^2)\to0$
(a.s.) Note that $\vg{\beta}_{\s{A}_{0n}}'\m{X}_{\s{A}_{0n}}'\m{H}_{\s{A}}\vg{\varepsilon}\sim N(0,v^2)$, where $v^2=\sigma^2\vg{\beta}_{\s{A}_{0n}}'\m{X}_{\s{A}_{0n}}'
\m{H}_{\s{A}}\m{X}_{\s{A}_{0n}}\vg{\beta}_{\s{A}_{0n}}$. By Assumption \ref{as:stand}, $v^2\leqslant
\sigma^2\lambda_{\max}(\m{H}_{\s{A}})\lambda_{\max}(\m{X}_{\s{A}_{0n}}'\m{X}_{\s{A}_{0n}})\|\vg{\beta}\|^2 \leqslant
\sigma^2{\mathrm{tr}}(\m{X}_{\s{A}_{0n}}'\m{X}_{\s{A}_{0n}})\|\vg{\beta}\|^2\leqslant nCd\sigma^2\|\vg{\beta}\|^2$. By Lemma \ref{lemma:chi} (i) and Assumption
\ref{as:bsrw}, $\vg{\beta}_{\s{A}_{0n}}'\m{X}_{\s{A}_{0n}}'\m{H}_{\s{A}}\vg{\varepsilon}/(n\delta_n\beta_{\min}^2)\to0$ (a.s.). This completes the proof of \eqref{fs}.

If Assumption \ref{as:bsrw2} holds, similar to the above proof, we can show$$\limsup_{n\to\infty}\left[\psi_n(\s{A}_n)/n-\psi_n(\s{B}_n)/n\right]<0\quad
\text{(a.s.)},$$which completes this proof.\end{proof}

\begin{remark}It is worthwhile noting that there is no any restriction on $p$ in Theorem \ref{th:bsr}. That is to say, if the required conditions are satisfied, then
Theorem \ref{th:bsr} holds no matter how large $p$ is. This point seems interesting since almost all results on high-dimensional asymptotics in the literature require
$p=o(\exp(n))$ (B\"{u}hlmann and van de Geer 2011). \end{remark}

\subsection{A simulation study}\label{subsec:bsrsimu}
\hskip\parindent \vspace{-0.6cm}

We conduct a small simulation study to verify the better-fitting better-screening rule. Related simulation results can be found in Xiong (2014). In model \eqref{lm}, all
rows of $\m{X}$ are i.i.d. from a multivariate normal distribution $N(\v{0},\m{\Sigma})$ whose covariance matrix $\m{\Sigma}=(\sigma_{ij})_{p\times p}$ has entries
$\sigma_{ii}=1,\ i=1,\ldots,p$ and $\sigma_{ij}=\rho,\ i\neq j$. The coefficients are given by $\beta_1=\beta_2=\beta_3=3$ and $\beta_j=0$ for other $j$. The random
errors $\varepsilon_1,\ldots,\varepsilon_n$ i.i.d. $\sim N(0,1)$, We fix $n=50$ and $\rho=0.05$, and vary $p$ from 100 to 10000. Three screening methods with $M=25$ are
compared: Efron et al. (2004)'s least angle regression (LAR), Fan and Lv (2008)'s sure independence screening (SIS), and the ``better" method that uses the better
results produced by LAR and SIS with smaller residual sum of squares as the final submodel. For each model, we simulate 1000 data sets and compute the coverage rates
(CRs) of including the true submodel, which are displayed in Table \ref{tab:screening}. We can see that all the results follow the better-fitting better-screening rule
well: the ``better" solution always yields larger CRs than LAR and SIS.

\begin{table}
\caption{\label{tab:screening}CR comparisons in Section \ref{subsec:bsrsimu}} \centering\vspace{2mm}
\begin{tabular}{*{8}{lccccccc}}\hline&\quad\quad&\multicolumn{6}{c}{$p$}\\\cline{3-8}&&$100$&$500$&$1000$&$3000$&$5000$&$10000$\\\hline
LAR    &&0.999&0.931&0.845&0.656&0.552&0.434\\
SIS    &&0.999&0.977&0.955&0.892&0.820&0.728\\
better &&1    &0.989&0.961&0.906&0.832&0.737\\\hline
\end{tabular}
\end{table}

\section{Discussion}\label{sec:dis}
\hskip\parindent
\vspace{-0.8cm}

In this section we end this paper with some discussion.

\subsection{Summary}
\hskip\parindent \vspace{-0.6cm}

When the global solution to a statistical optimization problem is difficult to obtain, BSP theoretically supports to the method of using the solution whose objective
value is as small as possible (for minimization problems). Interestingly, it can be studied within a simple framework based on several obvious but effective comparison
theorems. These theorems tell us that a better solution with smaller objective value is more likely to be a good decision if the objective function has the (strong)
separation property. Therefore, it suffices to prove the separation property of the objective function for verifying BSP. Following this way, we have discussed BSP for
several statistical optimization problems, and have established the corresponding separation properties. These problems lead to basic but important statistical methods,
including maximum likelihood estimation, best subsample selection in robust statistics, and best subset regression in variable selection.

Besides the usefulness in theory, BSP can provide viewpoints on the development of methodologies. In Section \ref{sec:boe}, a new best subsample selection method based
on the Kolmogorov distance has been introduced. The corresponding BSP holds under fairly weak conditions. Theoretical and numerical studies both show that this method
perform well when there are clustered outliers. As a byproduct, the robust estimate based on this selection method is consistent even under contaminated models. This
estimate may be of independent interest in robust estimation.

Strictly speaking, the strong separation property of the objective function is needed to establish BSP. This is a strong condition and actually implies the consistency
of the global solution (Theorems \ref{th:con} and \ref{th:con2}). We have proved this property only for maximum likelihood problem and best subset regression under
strong conditions. If we ignore the mathematical details, the weak separation property seems enough for practical use (Remarks \ref{rk:ctb} and \ref{rk:ctb2}). In
general, the weak separation property is relatively easy to prove. Simulation results in this paper are consistent to the theoretical discoveries even when only the weak
separation property is proved.

When computing the global solution is a problem, we should consider whether BSP holds. This principle is as important as the consistency property or other properties of
the global solution. We hope that statisticians will always keep BSP in mind when handling complex optimization problems. On the other hand, BSP can be used to justify a
statistical method from an optimization problem. A good objective function whose separation properties hold under mild conditions can provide us a way to combine weak
methods into a stronger one. Such examples can be found in Sections \ref{subsec:smlesimu} and \ref{subsec:bsrsimu}: the ``better" method can improve weak methods through
comparing their objective values.

\subsection{Limitations of this paper}
\hskip\parindent \vspace{-0.6cm}

A prerequisite of BSP is that the global solution has, or, is at least expected to have, desirable statistical properties. Therefore, the BSP theory is not applicable to
the statistical methods which are ``irregularly" derived from optimization problems. An example is boosting. Some authors showed that boosting can be viewed as a
steepest descent algorithm for minimizing a loss function (Breiman 1998; Friedman, Hastie, and Tibshirani 2000). It is stopped early since the minimum usually leads to
overfitting. In other words, minimization here is a ``pretense", and we are really interested in the solutions along the path to the minimum, not the minimum itself.
Another example is SCAD (Fan and Li 2001), which is a penalized likelihood estimate with a nonconcave penalty. Fan and Li (2001) proved that there exists a local
solution of SCAD possessing the so-called oracle property. When the oracle property is concerned, the local solution with this property is preferred to the global
solution, and thus BSP fails.

BSP is a general and non-specific concept, since the statistical properties of the solution to a optimization problem can be multifold. For example, besides estimation
accuracy, we use the M-estimate because of its robust properties such as the minimax property (Huber 1981). Another example is the regularized least squares method for
regression models such as the lasso (Tibshirani 1996), which is used for simultaneous estimation and variable selection. Therefore, we should evaluate it in terms of
both estimation and selection performance. This paper is just a beginning of the study on BSP, and focuses on the statistical property described with the probability of
being a ``good" decision. Nevertheless, we believe that there are other reasonable frameworks to establish BSP, which describe the ``better" statistical properties,
maybe non-asymptotics, in different manners and/or can cover multifold statistical properties of interest.

This paper does not discuss algorithms, i.e., how to find a better solution when BSP holds. For the algorithms used in best subsample and best subset selection, we refer
the reader to Rousseeuw and Van Driessen (1999), Hawkins and Olive (2002), Rousseeuw and Van Driessen (2006), Hofmann, Gatu, and Kontoghiorghes (2007), and Xiong (2014).
For discussion on general global optimization algorithms in statistics, see, e.g., Fang, Hickernell, and Winker (1996).

\subsection{Future directions}
\hskip\parindent \vspace{-0.6cm}

Besides the limitations of this paper aforementioned, a number of issues on BSP and related topics seem valuable to research in the future.

The applications of the comparison theorems presented in Sections \ref{sec:smle}-\ref{sec:bsr} are selective. The range of potential applications can be much broader
than presented. For example, many optimization problems listed in Section \ref{sec:intro} can be studied using them. A number of useful theoretical results that can
guide real data analysis may be obtained along this direction.

Another research direction related to BSP is to study statistical properties of sub-optimal solutions produced by certain algorithms. Recently, Ma, Mahoney, and Yu
(2013) studied estimation accuracy of several leverage-based algorithms for large-scale least squares problems. For the SCAD problem aforementioned, the statistical
properties of its local solutions that can be achieved by certain algorithms were discussed by Loh and Wainwright (2013), Wang, Liu, and Zhang (2013), and Xiong, Dai,
and Qian (2013). Note that forward stepwise selection is a greedy algorithm for best subset regression (Miller 2002). The study on its screening properties (Wang 2009)
can be bracketed with the work of this kind. For some estimation problems, the estimators derived from only one iteration of certain iterative algorithms can also have
appealing properties with good starting estimators (Bickel 1975; Fan and Chen 1999; Zou and Li 2008). Perhaps it is also valuable to study algorithm-based BSP.

Recently, Big Data begins to pose significant challenges to statistics (Fan, Han, and Liu 2013). For analyzing Big Data, not only statistical methodology but also
statistical theory should be considered based on computation. BSP can be viewed as a computational ability-based statistical theory, and we expect that BSP and related
methodologies will be paid more attention to in the future.

\section*{Acknowledgements}
\hskip\parindent \vspace{-0.8cm}

This work is supported by the National Natural Science Foundation of China (Grant No. 11271355). The author thanks Professor C. F. Jeff Wu for helpful discussion. The
author is also grateful to the support of Key Laboratory of Systems and Control, Chinese Academy of Sciences.

\vspace{1cm}
\noindent{\Large\bf References}

{\begin{description}

\item{}
Agull\'{o}, J., Croux, C., and Van Aelst, S. (2008). The multivariate least-trimmed squares estimator. \textit{Journal of Multivariate Analysis} {\bf 99} 311--338.

\item{}
Atkinson, A. C., Donev, A. N., and Tobias, R. D. (2007). \textit{Optimum Experimental Designs, with SAS}. Oxford University Press, Oxford.

%\item{}
%An, H., Huang, D., Yao, Q. and Zhang, C-H. (2008)
%``Stepwise Searching for Feature Variables in High-Dimensional Linear Regression," \textit{Technical Report},
%\\available at http://stats.lse.ac.uk/q.yao/qyao.links/paper/ahyz08.pdf.

\item{}
Anderson, T. W. (2003) \textit{An Introduction to Multivariate Statistical Analysis, 3rd Edition}. Wiley, New York.

%\item{}
%Barron, A., Birg\'{e}, L. and Massart, P. (1999)
%``Risk Bounds for Model Selection via Penalization,"
%\textit{Probability Theory and Related Fields}, {\bf 113}, 301--413.

%\item{}
%Ben-Israel, A. and Greville, T. N. E. (2003)
%\textit{Generalized Inverses, Theory and Applications, 2nd Edition}, New York: Springer.

\item{}
Bickel, P. J. (1975). One-step Huber estimates in the linear model. \textit{Journal of the American Statistical Association} {\bf 70} 428--434.

%\item{}
%Birg\'{e}, L. and Massart, P. (2001)
%``Gaussian Model Selection,"
%\textit{Journal of the European Mathematical Society}, {\bf 3}, 203--268.

\item{}
Breiman, L. (1995). Better subset regression using the nonnegative garrote. \textit{Technometrics} {\bf 37} 373--384.

\item{}
Breiman, L. (1998). Arcing classifiers (with discussion). \textit{The Annals of Statistics} {\bf 26} 801--849.

\item{}
B\"{u}hlmann, P. and Hothorn, T. (2007). Boosting algorithms: regularization, prediction and model fitting. \textit{Statistical Science} {\bf 22} 477--505.

\item{}
B\"{u}hlmann, P. and van de Geer, S. (2011). \textit{Statistics for High-Dimensional Data: Methods, Theory and Applications}. Springer, New York.

\item{}
Butler, R. W., Davies, P. L., and Jhun, M. (1993). Asymptotics for the minimum covariance determinant estimator. \textit{The Annals of Statistics} \textbf{21}
1385--1400.

%\item{}
%Chen, X. R. Wu, Y. (1994). Nonexistence of consistent estimates in a density estimation problem. \textit{Statistics} \& \textit{Probability Letters} {\bf 21} 141--145.

\item{}
Chow, Y. S. (1966). Some convergence theorems for independent random variables. \textit{The Annals of Mathematical Statistics} {\bf 37} 1482--1493.

%\item{}
%Dempster, A. P., Laird, N. M. and Rubin, D. B. (1977)
%``Maximum Likelihood from Incomplete Data via the EM Algorithm,"
%\textit{Journal of the Royal Statistical Society, Ser. B}, {\bf 39}, 1--38.

\item{}
Donoho, D. L. and Liu, R. C. (1994). The ``automatic" robustness of minimum distance functional. \textit{The Annals of Statistics} \textbf{16} 552--586.

\item{}
Dorsey, R. E. and Mayer, M. J. (1995). Genetic algorithms for estimation problems with multiple optima, nondifferentiability, and other irregular features.
\textit{Journal of Business and Economic Statistics} {\bf 13} 53--66.

\item{}
Efron, B., Hastie, T., Johnstone, I., and Tibshirani, R. (2004). Least angle regression. \textit{The Annals of Statistics} \textbf{32} 407--451.

\item{}
Fan, J. and Chen, J. (1999). One-step local quasi-likelihood estimation. \textit{Journal of the Royal Statistical Society, Ser. B} \textbf{61} 927--943.

\item{}
Fan, J. and Gijbels, I. (1996). \textit{Local Polynomial Modeling and Its Applications}. Chapman \& Hall, London.

\item{}
Fan, J. Han, F., and Liu, H. (2013). Challenges of Big Data analysis. arXiv preprint, arXiv: 1308.1479v1.

\item{}
Fan, J. and Li, R. (2001). Variable selection via nonconcave penalized likelihood and its oracle properties. \textit{Journal of the American Statistical Association}
\textbf{96} 1348--1360.

\item{}
Fan, J. and Lv, J. (2008). Sure independence screening for ultrahigh dimensional feature space (with discussion). \textit{Journal of the Royal Statistical Society, Ser.
B} \textbf{70} 849--911.

%\item{}
%Fan, J. and Lv, J. (2011). ``Properties of Non-concave Penalized Likelihood with NP-dimensionality," \textit{Information Theory, IEEE Transactions}, 57, 5467--5484.

%\item{}
%Fan, J., Samworth, R. and Wu, Y. (2009) ``Ultrahigh dimensional variable selection: beyond the linear model" \textit{Journal of Machine Learning Research}, \textbf{10},
%2013--2038.

%\item{}
%Fan, J. and Song, R. (2010) ``Sure independence screening in generalized linear models with NP-dimensionality" \textit{The Annals of Statistics}, \textbf{38},
%3567--3604.

\item{}
Fang, K. T., Hickernell, F. J., and Winker, P. (1996). Some global optimization algorithms in statistics, in \textit{Lecture Notes in Operations Research}, eds. by Du,
D. Z., Zhang, X. S. and Cheng, K. World Publishing Corporation, 14--24.

\item{}
Fang, K. T., Li, R., and Sudjianto, A. (2006). \textit{Design and Modeling for Computer Experiments}. Chapman \& Hall, London.

\item{}
Fang, K. T., Lin, D. K. J., Winker, P., and Zhang, Y. (2000). Uniform design: theory and application. \textit{Technometrics} {\bf 42} 237--248.

\item{}
Friedman, J., Hastie, T., and Tibshirani, R. (2000). Additive logistic regression: A statistical view of boosting (with discussion). \textit{The Annals of Statistics}
\textbf{28} 337--407.

\item{}
Freund, Y. and Schapire, R. (1997). A decision-theoretic generalization of on-line learning and an application to boosting. \textit{Journal of Computer and System
Sciences} {\bf 55} 119--139.

\item{}
Gan, L. and Jiang, J. (1999). A test for global maximum. \textit{Journal of the American Statistical Association} \textbf{94} 847--854.

%\item{}
%Gatu, C. and Kontoghiorghes, E. J. (2006)
%``Branch-and-Bound Algorithms for Computing the Best-Subset Regression Models,"
%\textit{Journal of Computational and Graphical Statistics}, \textbf{15}, 139--156.

\item{}
Gelman, A., Carlin, J.B., Stern, H.S., and Rubin, D.B. (2004). \textit{Bayesian Data Analysis, 2nd Edition}. Chapman \& Hall/CRC, New York.

%\item{}
%Gentle, J, E. (2004) \textit{Optimization Methods for Applications in Statistics}.  New York, Springer.

\item{}
Gleser, L. J. (1965). On the asymptotic theory of fixed-size sequential confidence bounds for linear regression parameters. \textit{The Annals of Mathematical
Statistics} {\bf 36} 463--467.

\item{}
Hall, P. and Miller, H. (2009). Using generalized correlation to effect variable selection in very high dimensional problems. \textit{Journal of Computational and
Graphical Statistics} \textbf{18} 533--550.

\item{}
Hastie, T., Tibshirani, R., and Friedman, J. H. (2008). \textit{The Elements of Statistical Learning: Data Mining, Inference, and Prediction, 2nd Edition.} Springer, New
York.

\item{}
Hawkins, D. M. and Olive, D. J. (2002). Inconsistency of resampling algorithms for high-breakdown regression estimators and a new algorithm. \textit{Journal of the
American Statistical Association} {\bf 97} 136--148.

\item{}
Hofmann, M., Gatu, C., and Kontoghiorghes, E. J. (2007). Efficient algorithms for computing the best subset regression models for large-scale problems.
\textit{Computational Statistics} \& \textit{Data Analysis} {\bf 52} 16--29.

\item{}
Huber, P. J. (1981). \textit{Robust Statistics}. Wiley, New York.

\item{}
Hubert, M., Rousseeuw, P. J., and Van Aelst, S. (2008). High-breakdown robust multivariate methods. \textit{Statistical Science} {\bf 23} 92--119.

%\item{}
%Huo X. and Ni. X. S. (2007)
%``When Do Stepwise Algorithms Meet Subset Selection Criteria?"
%\textit{The Annals of Statistics}, \textbf{35}, 870--887.

\item{}
Johnson, M. E., Moore, L. M., and Ylvisaker, D. (1990). Minimax and maximin distance designs. \textit{Journal of Statistical Planning and Inference} {\bf 26} 131--148.

\item{}
Kirkpatrick, S., Gelatt, C. D., and Vecchi, M. P. (1983). Optimization by simulated annealing. \textit{Science} \textbf{220} 671--680.

\item{}
Knight, K. and Fu, W. (2000). Asymptotics for lasso-type estimators. \textit{The Annals of Statistics} \textbf{28} 1356--1378.

\item{}
Ledoux, M. and Talagrand, M. (1980). \textit{{Probability in Banach Space: Isoperimetry and Processes}}. Springer, New York.

\item{}
Li, G., Peng, H., Zhang, J., and Zhu, L. (2012). Robust rank correlation based screening. \textit{The Annals of Statistics} \textbf{40} 1846--1877.

\item{}
Lindsay, B. G. (1994). Efficiency versus robustness: the case for minimum Hellinger distance and related methods. \textit{The Annals of Statistics} \textbf{22}
1081--1114.

\item{}
Loh, P.-L. and Wainwright, M. (2013). Regularized M-estimators with nonconvexity: Statistical and algorithmic theory for local optima. arXiv preprint, arXiv: 1305.2436.

\item{}
Lundy, M. and Mees, A. (1986). Convergence of an annealing algorithm. \textit{Mathematical Programming } \textbf{34} 111--124.

\item{}
Ma, P., Mahoney, M. W., and Yu, B. (2013). A statistical perspective on algorithmic leveraging. arXiv preprint arXiv:1306.5362v1

\item{}
Maronna, R. A., Martin, R.D., and Yohai, V.J. (2006). \textit{Robust Statistics: Theory and Methods}. Wiley, New York.

\item{}
Miller, A. (2002). \textit{{Subset Selection in Regression, 2nd Edition}}. Chapman \& Hall/CRC, New York.

\item{}
Niederreiter, H. (1992). \textit{Random Number Generation and Quasi-Monte Carlo Methods}. SAIM, Philadelphia.

\item{}
Owen, A. B. (2001). \textit{Empirical Likelihood}. Chapman \& Hall/CRC, London.

\item{}
Rao, C. R. and Wu, Y. (1989). A strongly consistent procedure for model selection in a regression problem. \textit{Biometrika} {\bf 76} 369--374.

\item{}
Rousseeuw, P. J. (1984). Least median of squares regression. \textit{Journal of the American Statistical Association} {\bf 79} 871--880.

\item{}
Rousseeuw, P. J. (1985). Multivariate estimation with high breakdown point. In \textit{Mathematical Statistics and Applications}, B, eds. by W. Grossmann, G. Pflug, I.
Vincze and W.Wertz. Reidel Publishing Company, Dordrecht.

\item{}
Rousseeuw, P. J. and Leroy, A.M. (1987). \textit{Robust Regression and Outlier Detection}. Wiley, New York.

\item{}
Rousseeuw, P. J. and Van Driessen, K. (1999). A fast algorithm for the minimum covariance determinant estimator. \textit{Technometrics} {\bf 41} 212--223.

\item{}
Rousseeuw, P. J. and Van Driessen, K. (2006). Computing LTS regression for large data sets. \textit{Data Mining and Knowledge Discovery} {\bf 12} 29--45.

\item{}
Schwarz, G. (1978). Estimating the dimension of a model. \textit{The Annals of Statistics} \textbf{6} 461--464.

\item{}
Scholkopf, B. and Smola, A. J. (2002). \textit{Learning With Kernels: Support Vector Machines, Regularization, Optimization, and Beyond}. MIT Press, Cambridge.

\item{}
Shao J. (1997). An asymptotic theory for linear model selection. \textit{Statistica Sinica} {\bf 7} 221--264.

\item{}
Tibshirani, R. (1996). Regression shrinkage and selection via the lasso. \textit{Journal of the Royal Statistical Society, Ser. B}. \textbf{58}, 267--288.

\item{}
Tibshirani, R., Bien, J., Friedman, J., Hastie, T., Simon, N., Taylor, J., and Tibshirani, R. J. (2012). Strong rules for discarding predictors in lasso-type problems.
\textit{Journal of the Royal Statistical Society, Ser. B}. \textbf{74}, 245--266.

\item{}
Van der Vaart, A. W. (1998). \textit{Asymptotic statistics}. Cambridge University Press, Cambridge.

\item{}
Wahba, G. (1990). \textit{Spline Models for Observational Data}. SIAM, Philadelphia.

\item{}
Wainwright, M. J. and Jordan, M. I., (2008). Graphical models, exponential families, and variational inference. \textit{Foundations and Trends in Machine Learning} {\bf
1} 1--305.

\item{}
Wald, A. (1949). Note on the consistency of maximum likelihood estimate. \textit{The Annals of Mathematical Statistics} {\bf 20} 595--601.

\item{}
Wang, H. (2009). Forward regression for ultra-high dimensional variable screening. \textit{Journal of the American Statistical Association} \textbf{104} 1512--1524.

\item{}
Wang, Z., Liu, H., and Zhang, T. (2013). Optimal computational and statistical rates of convergence for sparse nonconvex learning problems. arXiv preprint, arXiv:
1306.4960.

\item{}
Wendland, H. (2005). \textit{Scattered Data Approximation}. Cambridge University Press, Cambridge.

\item{}
Wolfowitz, J. (1957) The minimum distance method. \textit{The Annals of Mathematical Statistics} {\bf 28} 75--88.

\item{}
Wu, C. F. J. and Hamada, M. S. (2009). \textit{Experiments: planning, analysis, and optimization, 2nd edition}. Wiley, New York.

\item{}
Wu, J., Karunamuni, R., and Zhang, B. (2012) Efficient Hellinger distance estimates for semiparametric models. \textit{Journal of Multivariate Analysis} {\bf 107} 1--23.

\item{}
Xiong, S. (2014). Better subset regression. \textit{Biometrika} {\bf 101} 71--84.

\item{}
Xiong, S., Dai, B., and Qian, P. Z. G. (2013). OEM for least squares problems. arXiv preprint, arxiv: 1108.0185v2.

%\item{}
%Yu, B. (2007), Embracing Statistical Challenges in the Information Technology Age, TECHNOMETRICS, 49, 237--248.

\item{}
Zhang. C-H. (2010). Nearly unbiased variable selection under minimax concave penalty. \textit{The Annals of Statistics} {\bf 38} 894--942.

\item{}
Zou, H. and Li, R. (2008). One-step sparse estimates in nonconcave penalized likelihood models. \textit{The Annals of Statistics} {\bf 36} 1509--1533.

\item{}
Zuo, Y. and Serfling, R. (2000). General notions of statistical depth function. \textit{The Annals of Statistics} {\bf 28} 461--482.

\end{description}}

\end{document}